\newtheoremstyle{theorem}{8pt\vfill}{8pt\vfill}{\itshape}{}{\bfseries}{.}{.5em}{}
\newtheoremstyle{paragraph}{8pt\vfill}{8pt\vfill}{}{}{\bfseries}{}{.5em}{}
\theoremstyle{theorem}                                       
\newtheorem{thm}{Theorem}[section]
\newtheorem{prop}[thm]{Proposition}
\newtheorem{cor}[thm]{Corollary}
\newtheorem{lemma}[thm]{Lemma}
\theoremstyle{definition}
\newtheorem{df}[thm]{Definition}
\newtheorem{eg}[thm]{Example}
\newtheorem{rem}[thm]{Remark}
\theoremstyle{paragraph}                                     
\newtheorem{pg}[thm]{\!\!}
\newcommand{\norm}[1]{\left| #1 \right|}
\newcommand{\lrquot}[3]{#1 \,\backslash\, #2 \,/\, #3 }
\newcommand{\lquot}[2]{#1  \,\backslash\, #2 }
\newcommand{\rquot}[2]{#1  \, / \, #2 }
\newcommand{\putunder}[2]{\underset{#2}{#1}}
\def \smallmat #1 #2 #3 #4 {{\scriptstyle \begin{pmatrix} {#1} & {#2} \\ {#3} & {#4} \end{pmatrix}}}
\def \tinymat #1 #2 #3 #4 {\bigl( \begin{smallmatrix} {#1} & {#2} \\ {#3} & {#4} \end{smallmatrix} \bigr)}
\def\eqref #1{\textup{(}\ref{#1}\textup{)}}
\newcommand{\ses}[3]{\xymatrix{0\ar[r]&{#1}\ar[r]&{#2}\ar[r]&{#3}\ar[r]&0}}                
\def\blanc{\mspace{4mu}\cdot\mspace{4mu}}
\def\cT{\mathcal{T}}
\def\cO{\mathcal{O}}
\def\cQ{\mathcal{Q}}
\def\cV{\mathcal{V}}
\def\cA{\mathcal{A}}
\def\cH{\mathcal{H}}
\def\cF{\mathcal{F}}
\def\cL{\mathcal{L}}
\def\cI{\mathcal{I}}
\def\cJ{\mathcal{J}}
\def\cM{\mathcal{M}}
\def\cK{\mathcal{K}}
\def\cU{\mathcal{U}}
\def\cN{\mathcal{N}}
\def\cG{\mathcal{G}}
\def\cC{\mathcal{C}}
\def\cE{\mathcal{E}}
\def\cR{\mathcal{R}}
\def\ZZ{\mathbb{Z}}
\def\CC{\mathbb{C}}
\def\FF{\mathbb{F}}
\def\AA{\mathbb{A}}
\def\PP{\mathbb{P}}
\def\varE{\tilde E}
\DeclareMathOperator{\Hom}{Hom}
\DeclareMathOperator{\End}{End}
\DeclareMathOperator{\Aut}{Aut}
\DeclareMathOperator{\Ext}{Ext}
\DeclareMathOperator{\Gal}{Gal}
\DeclareMathOperator{\ch}{char}
\DeclareMathOperator{\GL}{GL}
\DeclareMathOperator{\PGL}{PGL}
\DeclareMathOperator{\SL}{SL}
\DeclareMathOperator{\Bun}{Bun}
\DeclareMathOperator{\Pic}{Pic}
\DeclareMathOperator{\Cl}{Cl}
\DeclareMathOperator{\PCl}{Cl^{pr}}
\DeclareMathOperator{\ECl}{Cl^{eff}}
\DeclareMathOperator{\Vertex}{Vert\,}
\DeclareMathOperator{\Edge}{Edge\,}
\DeclareMathOperator{\Stab}{Stab\,}
\DeclareMathOperator{\vol}{vol}
\DeclareMathOperator{\inv}{inv}
\def\PBun{\PP\!\Bun}
\def\PBundec{\PP\!\Bun_2^{\rm dec}}
\def\PBunindec{\PP\!\Bun_2^{\rm indec}}
\def\PBuntr{\PP\!\Bun_2^{\rm tr}}
\def\PBungi{\PP\!\Bun_2^{\rm gi}}
\def\tor{{\rm tor}}
\def\nsimeq{\simeq\hspace{-10.5pt}/\hspace{5.5pt}}
\def\smallnsimeq{\simeq\hspace{-5pt}/\hspace{2pt}}
\def\longhookrightarrow{\lhook\joinrel\longrightarrow}
\renewcommand{\date}[1]{\gdef\DD{#1}}\newcommand{\DD}{}
\title{Graphs of Hecke operators}
\author{Oliver Lorscheid}
\address{The City College of New York, Math. Dept., 160 Convent Ave., New York NY 10031, USA}
\email{olorscheid@ccny.cuny.edu}
\begin{document}

\begin{abstract}
 Let $X$ be a curve over $\FF_q$ with function field $F$. In this paper, we define a graph for each Hecke operator with fixed ramification. A priori, these graphs can be seen as a convenient language to organize formulas for the action of Hecke operators on automorphic forms. However, they will prove to be a powerful tool for explicit calculations and proofs of finite dimensionality results.

 We develop a structure theory for certain graphs $\cG_x$ of unramified Hecke operators, which is of a similar vein to Serre's theory of quotients of Bruhat Tits trees. To be precise, $\cG_x$ is locally a quotient of a Bruhat Tits tree and has finitely many components. An interpretation of $\cG_x$ in terms of rank $2$ bundles on $X$ and methods from reduction theory show that $\cG_x$ is the union of finitely many cusps, which are infinite subgraphs of a simple nature, and a nucleus, which is a finite subgraph that depends heavily on the arithmetics of $F$.

 We describe how one recovers unramified automorphic forms as functions on the graphs $\cG_x$. In the exemplary cases of the cuspidal and the toroidal condition, we show how a linear condition on functions on $\cG_x$ leads to a finite dimensionality result. In particular, we re-obtain the finite-dimensionality of the space of unramified cusp forms and the space of unramified toroidal automorphic forms.

 In an Appendix, we calculate a variety of examples of graphs over rational function fields.
\end{abstract}

\maketitle

\tableofcontents


\section*{Introduction}

\noindent
Hecke operators play a central r\^ole in the theory of automorphic forms. For classical modular forms, they are also computationally well understood. The theory of arithmetic quotients of the Bruhat-Tits tree as studied by Serre in \cite{Serre} allowed to study Hecke operator over $p$-adic fields by geometric methods. In this paper, we consider how to compute with Hecke operators for automorphic forms on $\PGL_2$ over a global function field. Our theory can be understood as a global counterpart to Serre's viewpoint over $p$-adic fields.


There are a few applications of Serre's theory to automorphic forms over global fields, which, however, mainly concentrate on rational function fields (cf.\ \cite{Gekeler1}, \cite{Gekeler2} and \cite{Gekeler-Nonnengardt}). The key ingredient of this application is the strong approximation property of $\SL_2$, as we will explain below. We begin with reminding the reader of the definition of a Bruhat-Tits tree. Though this paper is independent from Serre's book \cite{Serre}, we review some aspects of it since the global theory (as developed in this paper) and the local approach (as in Serre's book) go hand in hand. In later parts of the paper, we make a few remarks pointing out the connections with or the differences to Serre's theory.
 
Let $F$ be a global function field and $x$ be a fixed place. We denote by $F_x$ the completion of $F$ at $x$, by $\cO_x$ its integers, by $\pi_x\in\cO_x$ a uniformizer and by $q_x$ the cardinality of the residue field $\rquot{\cO_x}{(\pi_x)}\simeq\FF_{q_x}$. The Bruhat-Tits tree $\cT_x$ of $F_x$ is a graph with vertex set $\rquot{\PGL_2(F_x)}{\PGL_2(\cO_x)}$. There is an edge between two cosets $[g]$ and $[g']$ if and only if $[g']$ contains $g\tinymat 1 {} {} {\pi_x} $ or $g\tinymat {\pi_x} b {} 1 $ for some $b\in\FF_{q_x}$. Note that this condition is symmetric in $g$ and $g'$, so $\cT_x$ is a geometric graph. In fact, $\cT_x$ is a $(q_x+1)$-regular tree.

Every subgroup of $\PGL_2(F_x)$ acts on $\cT_x$ by multiplication from the left. We shall be interested in the following case. Let $\cO^x_F\subset F$ be the Dedekind ring of all elements $a\in F$ with $\norm a_y\leq 1$ for all places $y\neq x$. Put $\Gamma=\PGL_2(\cO^x_F)$. Serre investigates in \cite{Serre} the quotient graph $\lquot{\Gamma}{\cT_x}$. It is the union of a finite connected graph with a finite number of cusps. A cusp is an infinite graph of the form
\begin{center} \includegraphics{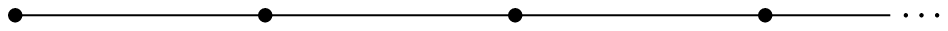}  \end{center}
and each cusp corresponds to an element of the class group of $\cO^x_F$.

An unramified automorphic form over $F_x$ can be interpreted as a function $f$ on the vertices of $\cG$ such that the space of functions generated by $\{T_x^i(f)\}_{i\geq0}$ is finite-dimensional where the Hecke operator $T_x$ is defined by the formula
$$ T_x(f)([g]) \quad = \quad \sum_{\substack{\text{edges }e\text{ with origin }[g]\\\text{and terminus }[g']}} [\Stab_\Gamma([g]):\Stab_\Gamma(e)] \cdot f([g']) $$
for each coset $[g]\in\rquot{\PGL_2(F_x)}{\PGL_2(\cO_x)}$. 

The inclusion of $\PGL_2(F_x)$ as $x$-component into $\PGL_2(\AA)$ induces a map
$$ \lrquot{\Gamma}{\PGL_2(F_x)}{\PGL_2(\cO_x)} \quad \longrightarrow \quad \lrquot{\PGL_2(F)}{\PGL_2(\AA)}{\PGL_2(\cO_\AA)} $$
where $\cO_\AA$ is the maximal compact subring of the adeles $\AA$ of $F$. In the case that $F$ is a rational function field (as in \cite{Gekeler1}, \cite{Gekeler2} and \cite{Gekeler-Nonnengardt}), or, more generally, a function field with odd class number, and $x$ is a place of odd degree, this map is a bijection as a consequence of the strong approximation property of $\SL_2$. The double coset space on the right hand side is the domain of automorphic forms over $F$, and the bijection is equivariant with respect to the Hecke operator $T_x$ and its global equivalent $\Phi_x$. 

In this sense, it is possible to approximate automorphic forms in this case and use the theory from Serre's book. However, the method of approximation breaks down if the function field has even class number or if the Hecke operator of interest is attached to a place of even degree. For automorphic forms over any function field (with possibly even class number) or for the investigation of Hecke operators at any place of a given function field respective a simultaneous description of all Hecke operators, the method of strong approximation is thus insufficient, and we see the need of a global analogon, which is the starting point of this paper.

The applications of this theory are primarily in explicit computations with automorphic forms. For instance in \cite{Lorscheid3}, graphs of Hecke operators are used to calculate the dimensions of spaces of cusp forms and toroidal automorphic forms. From a more conceptual viewpoint, it might be fruitful to explore the connections between graphs of Hecke operators and Drinfel$'$d modules; in particular, it might contribute to the Langland's program since there is a generalisation of graphs of Hecke operator to all reductive groups via adelic Bruhat-Tits buildings, which we forgo to explain here.

We give an overview of the content of this paper. In section \ref{section_definitions}, we introduce the graph of a Hecke operator as a graph with weighted edges that encodes the action of a Hecke operator on automorphic forms. This definition applies to every Hecke operator of $\PGL_2(\AA)$ over a global field. We collect first properties of these graphs and describe, how the algebraic structure of the Hecke algebra is reflected in dependencies between the graphs. In section \ref{section_unramified_hecke_operators}, we describe the graph $\cG_x$ of the unramified Hecke operators $\Phi_x$ (which correspond to the local Hecke operators $T_x$ as introduced above) in terms of coset representatives. In section \ref{section_bruhat-tits}, we make the connection to Bruhat-Tits trees precise: each component of $\cG_x$ is a quotient of $\cT_x$ by a certain subgroup of $\PGL_2(F_x)$, and the components of $\cG_x$ are counted by the $2$-torsion of the class group of $\cO^x_F$. In section \ref{section_vertex_labelling}, we associate to each vertex of $\cG_x$ a coset in $\Cl F/2\Cl F$ where $\Cl F$ is the divisor class group of $F$. We describe how these labels are distributed in $\cG_x$ in dependence of $x$.

In section \ref{section_geometric_interpretation}, we give the vertices and edges of $\cG_x$ a geometric meaning following ideas connected to the geometric Langland's program. Namely, the vertices correspond to the isomorphism classes of $\PP^1$-bundles on the smooth projective curve $X$ with function field $F$, and the edges correspond to certain exact sequences of sheaves on $X$. In section \ref{section_vertices}, we distinguish three classes of rank $2$ bundles: those that decompose into a sum of two line bundles, those that are the trace of a line bundle over the quadratic constant extension $X'$ of $X$ and those that are geometrically indecomposable. This divides the vertices of $\cG_x$ into three subclasses $\PBundec X$, $\PBuntr X$ and $\PBungi X$. The former two sets of vertices have a simple description in terms of the divisor class groups of $X$ and $X'$.

In section \ref{section_reduction_theory}, we introduce the integer valued invariant $\delta$ on the set of vertices, which is closely connected to reduction theory of rank $2$ bundles. This helps us to refine our view on the vertices: $\PBuntr X$ and $\PBungi X$ are contained in the finite set of vertices $v$ with $\delta(v)\leq 2g_X-2$ where $g_X$ is the genus of $X$. 
In section \ref{section_nucleus_and_cusps}. we describe the edges between vertices: $\cG_x$ decomposes into a finite graph, which depends heavily on the arithmetic of $F$, and class number many cusps, which are infinite weighted subgraphs of the form
\begin{center} \includegraphics{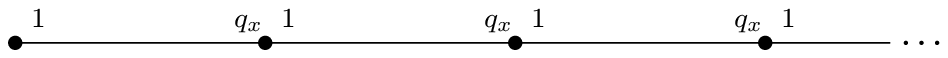}  \end{center}
We conclude this section with a summary of results on $\cG_x$ and illustrate them in Figure \ref{figure_genstr}.

In section \ref{section_automorphic_forms}, we explain, how abstract properties of unramified automorphic forms---with name, the compact support of cusp forms and eigenvalue equations for Eisenstein series---lead to an explicit description of them as functions on the vertices of the graphs $\cG_x$. In section \ref{section_finite-dimensionality}, we show that the spaces of functions on $\Vertex\cG_x$ that satisfy the cuspidal respective toroidal condition are finite dimensional. In particular, these spaces of functions contain only automorphic forms.

In Appendix \ref{appendix_examples}, we give a series of examples for a rational function field: $\cG_x$ for $\deg x\leq5$, the graphs of $\Phi_x^2$ and $\Phi_x^3$ for $\deg x=1$ and the graphs of two ramified Hecke operators. We give short explanations on how to calculate these examples.

\medskip\noindent
\textbf{Acknowledgements:} This paper is extracted from my thesis \cite{Lorscheid-thesis}. First of all, I would like to thank Gunther Cornelissen for his advice during my graduate studies. I would like to thank Frits Beukers and Roelof Bruggeman for their numerous comments on a lecture series about my studies.


\section{Definitions}
\label{section_definitions}

\noindent
In this section, we set up the notations that are used throughout the paper and introduce the notion of a graph of a Hecke operator. We collect first properties of these graphs and describe how the algebraic structure of the Hecke algebra is reflected in dependencies between the graphs of different Hecke operators.

\begin{pg}
 Let $q$ be a prime power and $F$ be the function field of a smooth projective curve $X$ over $\FF_q$. Let $\norm X$ the set of closed points of $X$, which we identify with the set of places of $F$. We denote by $F_x$ the completion of $F$ at $x\in \norm X$ and by $\cO_x$ the integers of $F_x$. We choose a uniformizer $\pi_x\in F$ for every place $x$. Let $\kappa_x=\cO_x/(\pi_x)$ be the residue field. Let $\deg x$ be the degree of $x$ and let $q_x=q^{\deg x}$ be the cardinality of $\kappa_x$. We denote by $\norm \ _x$ the absolute value on $F_x$ resp.\ $F$ such that $\norm{\pi_x}_x=q_x^{-1}$.

 Let $\AA$ be the adele ring of $F$ and $\AA^\times$ the idele group. Put $\cO_\AA=\prod\cO_x$ where the product is taken over all places $x$ of $F$. 
 The idele norm is the quasi-character $\norm\ :\AA^\times\to\CC^\times$ that sends an idele $(a_x)\in\AA^\times$ to the product $\prod\norm{a_x}_x$ over all local norms. By the product formula, this defines a quasi-character on the idele class group $\rquot{\AA^\times}{F^\times}$.

 Let $G=\PGL_2$. Following the habit of literature about automorphic forms, we will often write $G_\AA$ instead of $G(\AA)$ for the group of adelic points and $G_F$ instead of $G(F)$ for the group of $F$-valued points, et cetera. Note that $G_\AA$ comes together with an adelic topology that turns $G_\AA$ into a locally compact group. Let $K=G_{\cO_\AA}$ be the standard maximal compact open subgroup of $G_\AA$. We fix the Haar-measure on $G_\AA$ for which $\vol K=1$.

 The \emph{Hecke algebra $\cH$ for $G_\AA$} is the complex vector space of all compactly supported locally constant functions $\Phi: G_\AA\to\CC$ together with the convolution product
 $$ \Phi_1 \ast \Phi_2: g \mapsto \int\limits_{G_\AA} \Phi_1(gh^{-1})\Phi_2(h) \,dh\,. $$
 A Hecke operator $\Phi\in\cH$ acts on the space $\cV=C^0(G_\AA)$ of continuous functions $f:G_\AA\to\CC$ by the formula
 $$ \Phi(f)(g) \ = \ \int\limits_{G_\AA}\Phi(h)f(gh)\,dh. $$
 Let $K'$ be a compact open subgroup of $G_\AA$. Then we denote by $\cH_{K'}$ the subalgebra of $\cH$ that consists of all bi-$K'$-invariant functions. The above action restricts to an action of $\cH_{K'}$ on $\cV^{K'}$, the space of right $K'$-invariant functions. 
\end{pg}

\begin{lemma}
 \label{Hecke_action_as_finite_sum}
 For every $K'$ and every $\Phi\in\cH_{K'}$, there are $h_1,\dotsc,h_r\in G_\AA$ and $m_1,\dotsc,m_r\in\CC$ for some integer $r$ such that for all $g\in G_\AA$ and all $f\in\cV^{K'}$,
 $$ \Phi(f)(g) \ = \ \sum_{i=1}^r \, m_i\cdot f(gh_i) \;. $$
\end{lemma}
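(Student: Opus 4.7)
The proof plan is to unpack the integral defining $\Phi(f)(g)$ using the specific structure of $\Phi$, namely that it is locally constant, compactly supported, and bi-$K'$-invariant.

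The first step is to decompose the support of $\Phi$ into finitely many right $K'$-cosets. Since $\Phi$ is right $K'$-invariant (a consequence of bi-$K'$-invariance), its support $S=\mathrm{supp}(\Phi)$ is a union of right cosets $hK'$. Each such coset is open in $G_\AA$ (because $K'$ is open), so the family $\{hK'\mid h\in S\}$ is an open cover of the compact set $S$. Distinct cosets are disjoint, so a finite subcover is automatically a disjoint decomposition $S=\bigsqcup_{i=1}^r h_iK'$. Right $K'$-invariance of $\Phi$ then forces $\Phi$ to be constant on each $h_iK'$; call this value $c_i=\Phi(h_i)$.

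The second step is to break up the integral along this decomposition. Writing
$$\Phi(f)(g) \;=\; \int_{G_\AA}\Phi(h)f(gh)\,dh \;=\; \sum_{i=1}^r\int_{h_iK'}c_i\,f(gh)\,dh,$$
we use that $f\in\cV^{K'}$ is right $K'$-invariant: for any $h=h_ik$ with $k\in K'$, we have $f(gh)=f(gh_ik)=f(gh_i)$. Hence $f(gh)$ is constant on $h_iK'$ and can be pulled out. Left-invariance of the Haar measure gives $\vol(h_iK')=\vol(K')$, so setting $m_i=c_i\cdot\vol(K')$ yields
$$\Phi(f)(g) \;=\; \sum_{i=1}^r m_i\,f(gh_i),$$
as required. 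Importantly, the elements $h_1,\dotsc,h_r$ and scalars $m_1,\dotsc,m_r$ depend only on $\Phi$ and $K'$, not on $g$ or $f$.

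The only real point requiring care is the first step, passing from "locally constant with compact support" to "supported on finitely many right $K'$-cosets on each of which it is constant." This is immediate once one remembers that right $K'$-invariance forces level sets (in particular the support) to be unions of right $K'$-cosets, and that compactness together with the openness of these cosets cuts the union down to a finite one. Everything afterwards is a routine manipulation of the defining integral.
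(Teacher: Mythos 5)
Your proof is correct and follows essentially the same route as the paper: both reduce the integral to a finite sum over right $K'$-cosets using the openness of $K'$ and the compactness of the support, then pull $f(gh_i)$ out by right $K'$-invariance and evaluate $\vol(h_iK')=\vol(K')$. The only cosmetic difference is that the paper first writes $\Phi$ as a combination of characteristic functions of double cosets $K'hK'$ and then splits each into right cosets, whereas you decompose the support into right cosets directly.
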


\begin{proof}
 Since $\Phi$ is $K'$-bi-invariant and compactly supported, it is a finite linear combination of characteristic functions on double cosets of the form $K'hK'$ with $h\in G_\AA$. So we may reduce the proof to $\Phi=\ch_{K'hK'}$. Again, since $K'hK'$ is compact, it equals the union of a finite number of pairwise distinct cosets $h_1K',\ldots,h_{r}K'$, and thus
 $$ \int\limits_{G_\AA}\ch_{K'hK'}(h')f(gh')\,dh' \ = \ \sum_{i=1}^{r} \ \ \int\limits_{G_\AA}\ch_{h_iK'}(h')f(gh')\,dh \ = \ \sum_{i=1}^{r} \ \vol(K') f(gh_i) $$
 for arbitrary $g\in G_\AA$.
\end{proof}

 We will write $[g]\in\lrquot{G_F}{G_\AA}{K'}$ for the class that is represented by $g\in G_\AA$. Other cosets will also occur in this paper, but it will be clear from the context what kind of class the square brackets relate to.

\begin{prop}
 For all $\Phi \in \cH_{K'}$ and $[g]\in \lrquot{G_F}{G_\AA}{K'}$, there is a unique set of pairwise distinct classes $[g_1],\ldots,[g_r] \in \lrquot{G_F}{G_\AA}{K'}$ and numbers $m_1,\ldots,m_r \in \CC^\times$ such that for all $f\in\cV^{K'}$,
 $$ \Phi(f)(g) \ = \ \sum_{i=1}^r m_i f(g_i) \;. $$
\end{prop}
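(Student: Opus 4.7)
The plan is to refine Lemma~\ref{Hecke_action_as_finite_sum} by grouping its finite sum according to double cosets in $\lrquot{G_F}{G_\AA}{K'}$, and then to establish uniqueness by testing the identity against characteristic functions of individual classes. Throughout, the statement only acquires intrinsic content on those $f \in \cV^{K'}$ that are also left $G_F$-invariant, for otherwise $f(g_i)$ would depend on the representative chosen for the class $[g_i]$; this bi-invariant subspace is of course preserved by $\Phi$, so the argument takes place there.

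For existence, Lemma~\ref{Hecke_action_as_finite_sum} furnishes elements $h_1,\dotsc,h_s \in G_\AA$ and constants $n_1,\dotsc,n_s \in \CC$ with $\Phi(f)(g) = \sum_{j=1}^{s} n_j f(g h_j)$ for every $f \in \cV^{K'}$. I would introduce the equivalence relation $j \sim j'$ iff $[g h_j] = [g h_{j'}]$ in $\lrquot{G_F}{G_\AA}{K'}$, and on each equivalence class $C$ fix an index $j(C) \in C$ and set $g_C := g h_{j(C)}$, $m_C := \sum_{j \in C} n_j$. By left $G_F$- and right $K'$-invariance of $f$, every summand $f(g h_j)$ with $j \in C$ equals $f(g_C)$, so the original sum collapses to $\sum_C m_C f(g_C)$. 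Discarding those $C$ with $m_C = 0$ and reindexing yields the desired expression with pairwise distinct $[g_i]$ and all $m_i \in \CC^\times$.

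For uniqueness, I would suppose two such expressions, with data $(g_i, m_i)_{i=1}^{r}$ and $(g'_i, m'_i)_{i=1}^{r'}$, induce the same functional $f \mapsto \Phi(f)(g)$ on the relevant space. The key topological observation is that $K'$ is open in $G_\AA$, so $\rquot{G_\AA}{K'}$ and hence $\lrquot{G_F}{G_\AA}{K'}$ carry the discrete topology; for every class $[h]$ the characteristic function $\mathbf{1}_{[h]}$ therefore lifts to a continuous bi-invariant element of $\cV^{K'}$. Evaluating the identity at $f = \mathbf{1}_{[g_i]}$ yields $m_i$ on the left, and on the right either $0$ or a single $m'_{i'}$ with $[g'_{i'}] = [g_i]$ by pairwise distinctness. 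Since $m_i \neq 0$, exactly one such $i'$ must exist, and $m_i = m'_{i'}$; running this for every $i$ and then symmetrically produces a bijection of the two lists that also matches the coefficients.

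The main obstacle to watch for is the implicit left $G_F$-invariance of the test functions, without which $f(g_i)$ would not even be well-defined on the class $[g_i]$. Once this interpretation is adopted and the discreteness of $\lrquot{G_F}{G_\AA}{K'}$ (from openness of $K'$) is invoked, both halves of the proof reduce to routine bookkeeping on top of Lemma~\ref{Hecke_action_as_finite_sum}.
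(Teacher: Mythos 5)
Your proof is correct and follows essentially the same route as the paper, whose own proof is a single sentence: existence by regrouping the sum of Lemma \ref{Hecke_action_as_finite_sum} according to classes in $\lrquot{G_F}{G_\AA}{K'}$ and discarding zero coefficients, with uniqueness declared ``clear'' --- which your test against characteristic functions of individual classes (continuous since $K'$ is open, so the double coset space is discrete) makes precise. Your observation that the regrouping step tacitly requires left $G_F$-invariance of $f$ is a fair point about a slight imprecision in the statement; the intended test space is indeed the functions on the double coset space, as the paper's later identification of $\cA^{K'}$ with functions on $\lrquot{G_F}{G_\AA}{K'}$ confirms.
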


\begin{proof}
 Uniqueness is clear, and existence follows from Lemma \ref{Hecke_action_as_finite_sum} after we have taken care of putting together values of $f$ in same classes of $\lrquot{G_F}{G_\AA}{K'}$ and excluding the zero terms.
\end{proof}

\begin{df}
 \label{def_graph}
 With the notation of the preceding proposition we define
 $$ \cU_{\Phi,K'}([g]) = \{([g],[g_i],m_i)\}_{i=1,\ldots,r} \;. $$
 The classes $[g_i]$ are called the {\it $\Phi$-neighbours of $[g]$ (relative to $K'$)}, and the $m_i$ are called their \emph{weights}.
 
 The {\it graph $\cG_{\Phi,K'}$ of $\Phi$ (relative to $K'$)} consists of vertices
 $$ \Vertex \cG_{\Phi,K'} \ = \ \lrquot{G_F}{G_\AA}{K'} $$
 and oriented weighted edges
 $$ \Edge \cG_{\Phi,K'} \ = \ \bigcup_{v \in \Vertex \cG_{\Phi,K'}} \cU_{\Phi,K'}(v) \;. $$
\end{df}

\begin{rem}
 The usual notation for an edge in a graph with weighted edges consists of pairs that code the origin and the terminus, and an additional function on the set of edges that gives the weight. For our purposes, it is more convenient to replace the set of edges by the graph of the weight function and to call the resulting triples that consist of origin, terminus and the weight the edges of $\cG_{\Phi,K'}$.
\end{rem}

\begin{pg}
 We make the following drawing conventions to illustrate the graph of a Hecke operator: vertices are represented by labelled dots, and an edge $(v,v',m)$ together with its origin $v$ and its terminus $v'$ is drawn as 
 \begin{center} \includegraphics{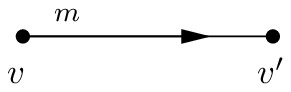} \end{center}
 If there is precisely one edge from $v$ to $v'$ and precisely one from $v'$ to $v$, which we call the inverse edge, we draw \vspace{0,3cm}
 $$ \begin{array}{cccccccc} 
      &\hspace{-0,3cm}\textup{in place of}
	  &&\hspace{-0,3cm}\textup{and}
	  &&\hspace{-0,3cm}\textup{in place of}
	  &&\hspace{-0,4cm}\textup{.} \vspace{-0,7cm} \\
       \xy \xyimport(1,1)(0,-5){\includegraphics{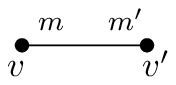}} \endxy 
	  &&\xy \xyimport(1,1)(0,-5){\includegraphics{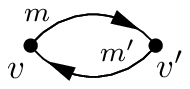}} \endxy 
	  &&\xy \xyimport(1,1)(0,-0){\includegraphics{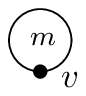}} \endxy 
	  &&\xy \xyimport(1,1)(0,-5){\includegraphics{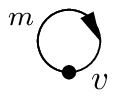}} \endxy 
    \end{array} $$
 There are various examples for rational function fields in Appendix \ref{appendix_examples}, and in \cite{Lorscheid3}, one finds graphs of Hecke operators for elliptic function fields.
\end{pg}

\begin{pg}
 \label{read_off_Hecke}
 We collect some properties that follow immediately from the definition of a graph of a Hecke operator $\Phi$. For $f\in\cV^{K'}$ and $[g]\in \lrquot{G_F}{G_\AA}{K'}$, we have that
 $$ \Phi(f)(g) = \sum_{\begin{subarray}{c} ([g],[g'],m')\\ \in \Edge \cG_{\Phi,K'}\end{subarray}} m' f(g') \;. $$
 Hence one can read off the action of a Hecke operator on $f\in\cV^{K'}$ from the illustration of the graph:
 \begin{center} \includegraphics{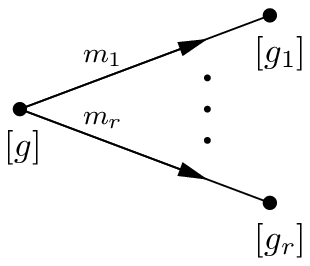} \end{center}

 Since $\cH=\bigcup \cH_{K'}$, with $K'$ running over all compact opens in $G_\AA$, the notion of the graph of a Hecke operator applies to any $\Phi\in\cH$. The set of vertices of the graph of a Hecke operator $\Phi\in\cH_{K'}$ only depends on $K'$, and only the edges depend on the particular chosen $\Phi$. There is at most one edge for each two vertices and each direction, and the weight of an edge is always non-zero. Each vertex is connected with only finitely many other vertices.
 
 \label{algebra_strucure_in_graphs}
 The algebra structure of $\cH_{K'}$ has the following implications on the structure of the set of edges (with the convention that the empty sum is defined as $0$).  For the zero element $0\in\cH_{K'}$, the multiplicative unit $1\in\cH_{K'}$, and arbitrary $\Phi_1,\Phi_2\in\cH_{K'}$, $r\in\CC^\times$ we obtain that
 \begin{eqnarray*}
    \Edge\cG_{0,K'}&=&\emptyset\;, \\  
    \Edge\cG_{1,K'}&=&\bigl\{\,(v,v,1)\,\bigr\}_{v\in \Vertex\cG_{1,K'}}\;, \\
    \Edge\cG_{\Phi_1+\Phi_2,K'}&=& \bigl\{\,(v,v',m)\, \bigl\vert \, m \ = \hspace{-0,4cm}\sum_{(v,v',m')\in\Edge\cG_{\Phi_1,K'}} \hspace{-0,4cm} m' \hspace{0,4cm} + \hspace{-0,4cm}\sum_{(v,v',m'')\in\Edge\cG_{\Phi_2,K'}} \hspace{-0,4cm} m'' \, \neq \, 0\, \bigr\}\;, \\ \\
    \Edge\cG_{r\Phi_1,K'}&=&\bigl\{\, (v,v',rm)\,\bigl\vert\,(v,v',m)\in\Edge\cG_{\Phi_1,K'}\,\bigr\}\;,\text{ and} \\
    \Edge\cG_{\Phi_1\ast\Phi_2,K'}&=&\bigl\{\,(v,v',m)\,\bigl\vert\, m\ = \hspace{-0,8cm}\sum_{\substack{(v,v'',m')\in\Edge\cG_{\Phi_1,K'}\vspace{-1pt}\\ \text{and}\vspace{1pt}\\(v'',v',m'')\in\Edge\cG_{\Phi_2,K'}}} \hspace{-0,8cm} m'\cdot m'' \neq 0\,\bigr\}. \\
 \end{eqnarray*}
 If $K''<K'$ and $\Phi\in\cH_{K'}$, then also $\Phi\in\cH_{K''}$. This implies that we have a canonical map $P:\cG_{\Phi,K''}\to\cG_{\Phi,K'}$, which is given by
 $$\begin{array}{ccc} \Vertex\cG_{\Phi,K''}=\lrquot{G_F}{G_\AA}{K''} & \stackrel{P}{\longrightarrow}  & \lrquot{G_F}{G_\AA}{K'}=\Vertex \cG_{\Phi,K'} \\ \end{array} $$
 and							 
 $$\begin{array}{ccc} \Edge\cG_{\Phi,K''} & \stackrel{P}{\longrightarrow} 
                                          & \Edge \cG_{\Phi,K'} \;. \\ 
                      (v,v',m')        & \longmapsto                   & (P(v),P(v'),m') \end{array} $$
\end{pg}
 
\begin{pg}
 \label{associated_matrix}
 One can also collect the data of $\cG_{\Phi,K'}$ in an infinite-dimensional matrix 
 $M_{\Phi,K'}$,
 which we call {\it the matrix associated to $\cG_{\Phi,K'}$},
 by putting $(M_{\Phi,K'})_{v',v}=m$ if $(v,v',m)\in\Edge\cG_{\Phi,K'}$, and $(M_{\Phi,K'})_{v',v}=0$ otherwise. 
 Thus each row and each column has only finitely many non-vanishing entries.
 
 The properties of the last paragraph imply:
 \begin{eqnarray*}
    M_{0,K'}&=&0, \quad \text{ the zero matrix,} \\
    M_{1,K'}&=&1, \quad \text{ the identity matrix,} \\
	 M_{\Phi_1+\Phi_2,K'}&=&M_{\Phi_1,K'}+M_{\Phi_2,K'}\;,   \\
	 M_{r\Phi_1,K'}&=&rM_{\Phi_1,K'}\;,\quad \text{ and}                      \\
	 M_{\Phi_1\ast\Phi_2,K'}&=&M_{\Phi_2,K'} M_{\Phi_1,K'}\;.
 \end{eqnarray*}
 Let $\cJ(K')\subset \cH_{K'}$ be the ideal of operators that act trivially on $\cV$, then we may regard $\cH_{K'}/\cJ(K')$ as a subalgebra of the algebra of $\CC$-linear maps
 $$ \bigoplus_{\lrquot{G_F}{G_\AA}{K'}} \CC \ \ \ \longrightarrow \ \ \ \bigoplus_{\lrquot{G_F}{G_\AA}{K'}} \CC \;. $$
\end{pg}


\section{Unramified Hecke operators}
\label{section_unramified_hecke_operators}

\noindent
 From now on we will restrict ourselves to unramified Hecke operators, which means, elements in $\cH_K$. In particular, we will investigate the graphs $\cG_x$ of certain generators $\Phi_x$ of $\cH_K$ in more detail.

\begin{pg}
  Consider the uniformizers $\pi_x\in F$  as ideles via the embedding $F^\times\subset F_x^\times\subset\AA^\times$ and define for every place $x$ the unramified Hecke operator $\Phi_x$ as the characteristic function of $K\tinymat \pi_x {} {} 1 K$. It is well-known that $\cH_K\simeq\CC[\Phi_x]_{x\in\norm X}$ as an algebra, which means, in particular, that $\cH_K$ is commutative. By the relations from paragraph \ref{algebra_strucure_in_graphs}, it is enough to know the graphs of generators to determine all graphs of unramified Hecke operators. We use the shorthand notation $\cG_x$ for the graph $\cG_{\Phi_x,K}$, and $\cU_x(v)$ for the $\Phi_x$-neighbours $\cU_{\Phi_x,K}(v)$ of $v$. 

 We introduce the {\it ``lower $x$ convention''} that says that a lower index $x$ on an algebraic group defined over the adeles of $F$ will consist of only the component at $x$ of the adelic points, for example, $G_x=G_{F_x}$. Analogously, we put $K_x=G_{\cO_x}$.

 The {\it ``upper $x$ convention''} means that an upper index $x$ on an algebraic group defined over the adeles of $F$ will consist of all components except for the one at $x$. In particular, we first define $\AA^x=\prod'_{y\neq x} F_y$, the restricted product relative to $\cO^x=\prod_{y\neq x}\cO_y$ over all places $y$ that do not equal $x$. Another example is $G^x=G_{\AA^x}$. We put $K^x=G_{\cO^x}$.
\end{pg}

\begin{pg}\label{def_xi_w}
 We embed $\kappa_x$ via $\kappa_x\subset F_x\subset \AA$, thus an element $b\in\kappa_x$ will be considered as the adele whose component at $x$ is $b$ and whose other components are $0$. Let $\PP^1$ be the projective line. Define for $w\in\PP^1(\kappa_x)$,
 $$ \xi_w \ = \ \smallmat \pi_x b {} 1 \quad \textrm{if }w=[1:b]\hspace{0,5cm} \text{and} \hspace{0,5cm} \xi_w \ = \ \smallmat 1 {} {} \pi_x \quad \textrm{if }w=[0:1]. $$
 It is well-known (cf.\ \cite[Lemma 3.7]{Gelbart1}) that the domain of $\Phi_x$ can be describe as
 $$ K\smallmat \pi_x {} {} 1 K = \coprod_{w\in\PP^1(\kappa_x)} \xi_wK \;. $$
 Consequently the weights of edges in $\cG_x$ are positive integers (recall that $\vol K=1$). We shall also refer to the weights as the \emph{multiplicity} of a $\Phi_x$-neighbour. The above implies the following.
\end{pg}

\begin{prop}
 \label{Phi_x_neighbours}
 The $\Phi_x$-neighbours of $[g]$ are the classes $[g\xi_w]$ with $\xi_w$ as in the previous lemma, and the multiplicity of an edge from $[g]$ to $[g']$ equals the number of $w\in\PP^1(\kappa_x)$ such that $[g\xi_w]=[g']$. The multiplicities of the edges originating in $[g]$ sum up to $\#\ \PP^1(\kappa_x)=q_x+1$.\qed
\end{prop}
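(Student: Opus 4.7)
The plan is to compute $\Phi_x(f)(g)$ directly using the Hecke integral and the coset decomposition from the previous paragraph, and then compare with the defining formula of the graph in Definition \ref{def_graph}.

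First I would unfold the definition: since $\Phi_x$ is the characteristic function of $K \tinymat {\pi_x} {} {} 1 K$ and $\vol K = 1$, the formula
$$ \Phi_x(f)(g) \ = \ \int\limits_{G_\AA} \Phi_x(h) f(gh)\, dh \ = \ \int\limits_{K\tinymat {\pi_x} {} {} 1 K} f(gh)\, dh $$
holds for every $f\in\cV^K$. Using the disjoint decomposition
$$ K\smallmat {\pi_x} {} {} 1 K \ = \ \coprod_{w\in\PP^1(\kappa_x)} \xi_w K $$
recalled in paragraph \ref{def_xi_w}, the integral splits into a finite sum of integrals over the cosets $\xi_w K$. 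Right $K$-invariance of $f$ gives $f(g\xi_w h) = f(g\xi_w)$ for every $h\in K$, so each such integral evaluates to $\vol(\xi_w K)\cdot f(g\xi_w) = f(g\xi_w)$. Altogether,
$$ \Phi_x(f)(g) \ = \ \sum_{w\in\PP^1(\kappa_x)} f(g\xi_w). $$

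Next I would pass to double-coset classes. Each $g\xi_w$ represents a class $[g\xi_w]\in\lrquot{G_F}{G_\AA}{K}$, and since $f$ is $G_F$-invariant on the left and $K$-invariant on the right, its value only depends on $[g\xi_w]$. Grouping terms according to these classes rewrites the above sum as
$$ \Phi_x(f)(g) \ = \ \sum_{[g']} \ \Bigl(\,\#\bigl\{\, w\in\PP^1(\kappa_x) \,\bigl|\, [g\xi_w]=[g']\bigr\}\,\Bigr)\cdot f(g'), $$
where $[g']$ runs over the finitely many distinct classes occurring among the $[g\xi_w]$. By the uniqueness part of the proposition preceding Definition \ref{def_graph}, this is precisely the decomposition defining $\cU_{\Phi_x,K}([g])$. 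Thus the $\Phi_x$-neighbours of $[g]$ are exactly the distinct classes $[g\xi_w]$, and the multiplicity of the edge from $[g]$ to $[g']$ is the number of $w$ with $[g\xi_w]=[g']$, as claimed.

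Finally, summing the multiplicities of all edges out of $[g]$ counts every $w\in\PP^1(\kappa_x)$ exactly once, so the total equals $\#\PP^1(\kappa_x) = q_x+1$. There is no real obstacle here; the only thing to watch is that the decomposition of $K\tinymat {\pi_x} {} {} 1 K$ into right $K$-cosets is indeed disjoint and indexed by $\PP^1(\kappa_x)$, which is quoted from \cite[Lemma 3.7]{Gelbart1}, and that $\vol K = 1$ so that the weights coming from the Hecke integral are pure counting numbers.
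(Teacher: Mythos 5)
Your proof is correct and follows exactly the route the paper intends: the coset decomposition $K\tinymat {\pi_x} {} {} 1 K = \coprod_{w\in\PP^1(\kappa_x)}\xi_w K$ from paragraph \ref{def_xi_w} combined with Lemma \ref{Hecke_action_as_finite_sum} (with $\vol K=1$) gives $\Phi_x(f)(g)=\sum_w f(g\xi_w)$, and grouping by double-coset classes yields the multiplicities. The paper leaves this as immediate (hence the \qed{} in the statement), and your write-up simply spells out the same argument.
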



\section{Connection with Bruhat-Tits trees}
\label{section_bruhat-tits}

\noindent
Fix a place $x$. In this section we construct maps from Bruhat-Tits trees to $\cG_x$. This will enable us to determine the components of $\cG_x$.

\begin{df}
 \label{def_Bruhat-Tits}
 The {\it Bruhat-Tits tree} $\cT_x$ for $F_x$ is the (unweighted) graph with vertices
 $$ \Vertex \cT_x \ = \ \rquot{G_x}{K_x} $$
 and edges
 $$ \Edge \cT_x \ = \ \{\ ([g],[g'])\mid\exists w\in\PP^1(\kappa_x), \ g \equiv g'\xi_w \pmod{K_x}\ \} \;. $$
\end{df}

\begin{pg}
 \label{Psi_x}
 Consider $G_x$ to be embedded in $G_\AA$ as the component at $x$. For each $h\in G_\AA$, we define a map
 $$ \Psi_{x,h}: \cT_x \longrightarrow \cG_x $$
 by
 $$ \begin{array}{rcl}\Vertex \cT_x = \rquot{G_x}{K_x} & \longrightarrow & \lrquot{G_F}{G_\AA}{K} = \Vertex \cG_x\\ \ [g] \ \ \ \           & \longmapsto     & \ \ \  [hg] \end{array} $$
 and
 $$ \begin{array}{ccc}\Edge \cT_x & \longrightarrow & \Edge \cG_x\\ ([g],[g'])      & \longmapsto     & ([hg],[hg'],m) \end{array} $$
 with $m$ being the number of vertices $[g'']$ that are adjacent to $[g]$ in $\cT_x$ such that $\Psi_{x,h}([g''])=\Psi_{x,h}([g'])$.
 
 By Proposition \ref{Phi_x_neighbours} and the definition of a Bruhat-Tits tree, $\Psi_{x,h}$ is well-defined and {\it locally surjective}, i.e.\ it is locally surjective as a map between the associated simplicial complexes of $\cT_x$ and $\cG_x$ with suppressed weights.

 Since Bruhat-Tits trees are indeed trees (\cite[II.1, Thm.\ 1]{Serre}), hence in particular connected, the image of each $\Psi_{x,h}$ is precisely one component of $\cG_x$, i.e.\ a subgraph that corresponds to a connected component of the associated simplicial complex.

 \label{sym_edges}
 Every edge of the Bruhat-Tits tree has an inverse edge, which implies the analogous statement for the graphs $\cG_x$. Namely, if $(v,v',m)\in\Edge\cG_x$, then there is a $m'\in\CC^\times$ such that $(v',v,m')\in\Edge\cG_x$.
\end{pg}

\begin{rem}
 This symmetry of edges is a property that is particular to unramified Hecke operators for $G=PGL_2$. In case of ramification, the symmetry is broken, cf.\ Example \ref{ramification_example}. 
\end{rem}

\begin{pg}
 The algebraic group $\SL_2$ has the \textit{strong approximation property}, i.e.\ for every place $x$, $\SL_2 F$ is a dense subset of $\SL_2\AA^x$ with respect to the adelic topology. This was proven by Kneser (\cite{Kneser2}) for number fields and was extended independently by Prasad (\cite{Prasad}) and Margulis (\cite{Margulis}) to global fields. See \cite[Thm.\ E.2.1]{Laumon2} for a direct reference. We explain, which implication this has on $\PGL_2$. More detail for the outline in this paragraph can be found in \cite[(2.1.3)]{Put-Reversat}.
 
 \label{decomposition_gamma_skg_x}
 Let $x$ be a place of degree $d$. In accordance to the the upper $x$ convention, let $\cO^x=\prod_{y\neq x}\cO_y$. The determinant map on $\GL_2$ induces a bijection on double cosets:
 $$ \lrquot{\GL_2(F)}{\GL_2(\AA^x)}{\GL_2(\cO^x)} \stackrel{\det}{\longrightarrow} \lrquot{F^\times}{(\AA^x)^\times}{(\cO^x)^\times}. $$
 The quotient group $\lrquot{F^\times}{(\AA^x)^\times}{(\cO^x)^\times}$ is nothing else but the ideal class group $\Cl \cO_F^x$ of the {\it integers $\cO_F^x = \bigcap_{y\neq x}(\cO_y\cap F)$ coprime to $x$}. Let $\Cl F=\lrquot{F^\times}{\AA^\times}{\cO_\AA^\times}$ be the divisor class group of $F$ and $\Cl^0 F=\{[a]\in\Cl F|\deg a=0\}$ be the ideal class group. Then we have bijections
 $$ \lrquot{\GL_2(F)}{\GL_2(\AA^x)}{\GL_2(\cO^x)} \ \simeq \ \lrquot{F^\times}{(\AA^x)^\times}{(\cO^x)^\times} \ \simeq \ \Cl \cO_F^x \ \simeq \ \Cl^0 F \times \ZZ/d\ZZ \;. $$
 Let $S\subset \GL_2(\AA^x)$ be a set of representatives for $\lrquot{\GL_2(F)}{\GL_2(\AA^x)}{\GL_2(\cO^x)}$. Then every $g=g^xg_x\in \GL_2(\AA)$ (with $g^x\in \GL_2(\AA^x)$ and $g_x\in \GL_2(F_x)$), there are $s\in S$, $\gamma\in \GL_2(F)$ and $k\in \GL_2(\cO^x)$ such that $g=\gamma s k \tilde g_x$ where $\gamma s k$ equals $g$ in all components $z\neq x$ and $\tilde g_x=\gamma^{-1}g_x$. The condition $[\det s]=[\det g^x]$ as cosets in $\lrquot{F^\times}{(\AA^x)^\times}{(\cO^x)^\times}$ implies that $s\in S$ is uniquely determined by $g^x$. Let $Z$ be the center of $\GL_2$, then
 \begin{multline*} \rquot{\GL_2(\AA)}{\GL_2(\cO_\AA)Z_x} \ = \ \bigl(\rquot{\GL_2(\AA^x)}{\GL_2(\cO^x)}\bigr)\times\bigl(\rquot{G_x}{K_x}\bigr) \\ = \ \bigl(\rquot{\GL_2(\AA^x)}{\GL_2(\cO^x)}\bigr)\times\Vertex\cT_x \;. \end{multline*}
 Define $\Gamma_s = \GL_2(F) \cap s\GL_2(\cO^x)s^{-1}$. Then we obtain the following (cf.\ \cite[(2.1.3)]{Put-Reversat}).
\end{pg}

\begin{prop}
 \label{strong_approx}
  The decomposition $ g=\gamma s k \tilde g_x $ induces a bijective map
 \begin{eqnarray*} \lrquot{\GL_2(F)}{\GL_2(\AA^x)}{\GL_2(\cO_\AA)Z_x}&\longrightarrow&\coprod_{s\in S}\ \lquot{\Gamma_s}{\Vertex\cT_x}\;. \\  \ [g] \                 & \longmapsto   & (s,[\tilde g_x]) \end{eqnarray*} 
 Its inverse is obtained by putting together the components $s\in \GL_2(\AA^x)$ and $\tilde g_x\in G_x$.\qed
\end{prop}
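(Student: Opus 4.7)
The plan is to exhibit both the forward map and its inverse explicitly using the decomposition $g=\gamma s k \tilde g_x$ established in paragraph~\ref{decomposition_gamma_skg_x}. The substantive input, namely strong approximation for $\SL_2$ and the determinant bijection onto $\Cl \cO_F^x$, has already been used to produce the representative set $S$ and to guarantee that $s$ is uniquely determined by the class of $\det g^x$. What remains is to check that the assignment $[g]\mapsto (s,[\tilde g_x])$ descends to double cosets and to verify that $(s,[h])\mapsto [sh]$ is a two-sided inverse.

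First I would check well-definedness of the $s$-component. Replacing $g$ by $\gamma_0 g k_0 z_x$ for $\gamma_0\in\GL_2(F)$, $k_0\in\GL_2(\cO_\AA)$, $z_x\in Z_x$ alters $g^x$ to $\gamma_0 g^x k_0^x$ (since $z_x$ has trivial non-$x$ component), so $\det g^x$ is multiplied by an element of $F^\times(\cO^x)^\times$, and its class in $\lrquot{F^\times}{(\AA^x)^\times}{(\cO^x)^\times}$, hence $s$, is unchanged.

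Next I would check well-definedness of $[\tilde g_x]\in\lquot{\Gamma_s}{\Vertex\cT_x}$, tracking three independent sources of ambiguity. For fixed $s$, any two valid factorizations $g^x=\gamma s k=\gamma' s k'$ differ by $\delta=\gamma'^{-1}\gamma\in\GL_2(F)\cap s\GL_2(\cO^x)s^{-1}=\Gamma_s$, and then $\tilde g'_x=\delta\tilde g_x$, which is absorbed by the left $\Gamma_s$-action. Right multiplication of $g$ by $\GL_2(\cO_\AA)Z_x$ modifies $\tilde g_x$ on the right by an element of $\GL_2(\cO_x)Z_x$, which is precisely the stabilizer of the corresponding vertex in $\cT_x$. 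Finally, left multiplication by $\GL_2(F)$ can be absorbed into $\gamma$ and leaves $\tilde g_x=\gamma^{-1}g_x$ unchanged.

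For the inverse, given $(s,[h])$ I would form $sh\in\GL_2(\AA)$ by placing $s\in\GL_2(\AA^x)$ in the non-$x$ components and $h\in G_x$ in the $x$-component. The crucial observation is that factors supported in disjoint adelic components commute, so $k\tilde g_x=\tilde g_x k$ in $\GL_2(\AA)$ for any $k\in\GL_2(\cO^x)$. Using this, the original decomposition rewrites as $g=\gamma s k\tilde g_x=\gamma(s\tilde g_x)k$, exhibiting $s\tilde g_x\in \GL_2(F)\,g\,\GL_2(\cO_\AA)$, so that $[g]\mapsto(s,[\tilde g_x])\mapsto[s\tilde g_x]=[g]$. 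The reverse composition is immediate since $sh=1\cdot s\cdot 1\cdot h$ is already a valid decomposition with $\gamma=k=1$ and $\tilde g_x=h$. The only real obstacle is bookkeeping: keeping track of which completion each matrix belongs to and using only the commutativities between $x$-local and non-$x$-local factors that are genuinely available.
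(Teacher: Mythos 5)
Your argument is correct and follows essentially the same route as the paper, which states the proposition as an immediate consequence of the decomposition $g=\gamma s k\tilde g_x$ set up in paragraph \ref{decomposition_gamma_skg_x} (citing van der Put--Reversat) and leaves the descent and inverse verifications to the reader; your write-up supplies exactly those verifications. The one step you assert rather than check---that $(s,[h])\mapsto[sh]$ is well defined on $\Gamma_s$-orbits---follows from the same observation you use in the forward direction, namely that any $\delta\in\Gamma_s$ satisfies $\delta^{-1}s=sk_1$ in $\GL_2(\AA^x)$ for some $k_1\in\GL_2(\cO^x)$, which is absorbed into the right $\GL_2(\cO_\AA)Z_x$-coset.
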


\begin{rem}
 \label{comp_with_Serre}
 On the right hand side of the bijection in Proposition \ref{strong_approx}, we have a finite union of quotients of the form $\lquot{\Gamma_s}{\Vertex \cT_x}$. If $s$ is the identity element $e$, then $\Gamma=\Gamma_e=\GL_2(\cO_F^x)$ is an arithmetic group of the form that Serre considers in \cite[II.2.3]{Serre}. For general $s$, however, I am not aware of any results about $\lquot{\Gamma_s}{\Vertex \cT_x}$ in the literature.
\end{rem}

\begin{pg}
 \label{action_of_centrum}
 So far, we have only divided out the action of the $x$-component $Z_x$ of the centre. We still have to consider the action of $Z^x$. If we restrict the determinant map to the centre and write $J=\{z\in \lrquot{Z_F}{Z^x}{Z_{\cO^x}}\mid \norm{\det z} = 1 \}$, then we have an exact sequence of abelian groups
 $$ \begin{array}{ccccccccccc} 1 & \to & J & \to & \lrquot{Z_F}{Z^x}{Z_{\cO^x}} & \stackrel{\det}{\longrightarrow} &\Cl \cO_F^x & \to & \rquot{\Cl \cO_F^x}{2\Cl \cO_F^x} & \to & 0 \;. \end{array} $$
 Let $S$ be as in paragraph \ref{decomposition_gamma_skg_x}. The action of $Z^x$ on $S$ factors through $2\Cl \cO_F^x$ and the action of $Z^x$ on $\lquot{\Gamma_s}{\Vertex\cT_x}$ factors through $J$ for each $s\in S$. If we let $S'\subset G^x$ be a set of representatives for $\rquot{\Cl \cO_F^x}{2\Cl \cO_F^x}$ (with respect to the determinant map), and $h_2=\#(\Cl F)[2]$ the cardinality of the $2$-torsion, then we obtain:
\end{pg}

\begin{prop}
 \label{strong_approx+comp}
 The decomposition $ g=\gamma s k \tilde g_x $ induces a bijective map
 $$ \lrquot{G_F}{G_\AA}{K} \ \ \longrightarrow \ \ \coprod_{s\in S'}\ \lquot{J \,\Gamma_s}{\Vertex \cT_x}\;. $$
 The inverse maps an element $(s,[\tilde g_x])$ to the class of the adelic matrix with components $s\in G^x$ and $\tilde g_x\in G_x$. The number of components of $\cG_x$ equals
 $$ \#\bigl(\rquot{\Cl\cO_F^x}{2\Cl\cO_F^x}\bigr) = \#(\Cl O_F^x)[2] = \left\{ \begin{array}{ll} h_2  & \textrm{if }\deg x\textrm{ is odd,}\\ 2h_2 & \textrm{if }\deg x\textrm{ is even.} \end{array}\right. $$
\end{prop}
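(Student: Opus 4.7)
My plan is to deduce this proposition from Proposition \ref{strong_approx} by factoring out the residual action of $Z^x$. Since $Z_\AA=Z_x\cdot Z^x$ and $Z^x$ is central, and since $Z_x$ is already absorbed on the right in $\GL_2(\cO_\AA)Z_x$, one has
$$ \lrquot{G_F}{G_\AA}{K} \ = \ Z^x\backslash\Bigl(\lrquot{\GL_2(F)}{\GL_2(\AA)}{\GL_2(\cO_\AA)Z_x}\Bigr), $$
so by Proposition \ref{strong_approx} it suffices to compute the $Z^x$-quotient of $\coprod_{s\in S}\lquot{\Gamma_s}{\Vertex\cT_x}$.

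I would then run the analysis already sketched in paragraph \ref{action_of_centrum}. For $z=\tinymat a 0 0 a \in Z^x$ and $g=\gamma s k\tilde g_x$, centrality gives $zg=\gamma(zs)k\tilde g_x$, and redecomposing $zs=\gamma_z\, s'\, k_z$ with $\gamma_z\in \GL_2(F)$ and $k_z\in \GL_2(\cO^x)$ identifies the new representative $s'\in S$ via $[\det s']=[\det s]+2[a]$ in $\Cl\cO_F^x$. Since $2[a]\in 2\Cl\cO_F^x$, the orbits of $Z^x$ on $S$ are parametrized by $\rquot{\Cl\cO_F^x}{2\Cl\cO_F^x}$, with $S'$ serving as a set of representatives (realized inside $G^x$ via the determinant). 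The stabilizer of each such $s\in S'$ in $\lrquot{Z_F}{Z^x}{Z_{\cO^x}}$ is exactly the kernel $J$ by the exact sequence of paragraph \ref{action_of_centrum}. To conclude the bijection I must verify that $J$ acts on $\lquot{\Gamma_s}{\Vertex\cT_x}$ by enlargement to $J\Gamma_s$: tracking the $x$-component of the rewriting $zg=(\gamma\gamma_z)\,s\,(k_zk)\,\bigl((\gamma_z)_x^{-1}\tilde g_x\bigr)$ shows that $z\in J$ translates $[\tilde g_x]$ by the $x$-image of a global lift $\gamma_z\in \GL_2(F)$, well-defined modulo $\Gamma_s$. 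Absorbing these translations into $\Gamma_s$ yields the group $J\Gamma_s$, and the quotient is $\lquot{J\Gamma_s}{\Vertex\cT_x}$ as claimed.

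For the final count, each $\lquot{J\Gamma_s}{\Vertex\cT_x}$ is connected (being a quotient of the connected tree $\cT_x$) and, by the locally surjective maps $\Psi_{x,h}$ of paragraph \ref{Psi_x}, constitutes a single component of $\cG_x$. Distinct classes $s\in S'$ give distinct components by the bijection just proved, so the number of components equals $\#S'=\#\bigl(\rquot{\Cl\cO_F^x}{2\Cl\cO_F^x}\bigr)$. The elementary identity $\#(\rquot{A}{2A})=\#(A[2])$ for finite abelian groups $A$ converts this to $\#(\Cl\cO_F^x[2])$. Using the decomposition $\Cl\cO_F^x\simeq\Cl^0 F\times\ZZ/d\ZZ$ from paragraph \ref{decomposition_gamma_skg_x} (with $d=\deg x$), this is $\#(\Cl^0 F[2])\cdot\#((\ZZ/d\ZZ)[2])$. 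Since $\Cl F/\Cl^0 F=\ZZ$ is torsion-free, $\Cl^0 F[2]=(\Cl F)[2]$ has order $h_2$, while $(\ZZ/d\ZZ)[2]$ has order $1$ or $2$ according as $d$ is odd or even; this yields the claimed dichotomy.

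The main obstacle I anticipate is the second step: verifying that $J$ acts via $J\Gamma_s$ and not trivially on $\lquot{\Gamma_s}{\Vertex\cT_x}$. Because elements of $J\subset Z^x$ are supported away from $x$, the naive expectation is that they act trivially on $\Vertex\cT_x=\rquot{G_x}{K_x}$; the correct action comes instead from the fact that when $z\in J$ one can absorb $z$ into a global element $\gamma_z\in \GL_2(F)$ (this is precisely what $J=\ker\det$ ensures), and it is the $x$-component of $\gamma_z$ that translates $\Vertex\cT_x$. One has to check both that this lift exists and that it is unique modulo $\Gamma_s$, so that the induced action descends to $J/(J\cap\Gamma_s)$ and justifies the notation $J\Gamma_s$.
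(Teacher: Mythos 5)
Your proposal is correct and follows essentially the same route as the paper: the paper's proof likewise reduces the bijection to Proposition \ref{strong_approx} combined with the analysis of the $Z^x$-action in paragraph \ref{action_of_centrum}, and then verifies the two cardinality identities via the structure theorem for finite abelian groups and the decomposition $\Cl\cO_F^x\simeq\Cl^0 F\times\ZZ/d\ZZ$. You supply somewhat more detail on why $J$ acts through global lifts (yielding $J\Gamma_s$), a point the paper only asserts, but the argument is the same.
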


\begin{proof}
 Everything follows from Proposition \ref{strong_approx} and paragraph \ref{action_of_centrum} except for the two equalities in the last line. Regarding the former, observe that both dividing out the squares and taking $2$-torsion commutes with products, so by the structure theorem of finite abelian groups, we can reduce the proof to groups of the form $\ZZ/\tilde p^m \ZZ$ with $\tilde p$ prime. If $\tilde p \neq 2$, then every element is a square and there is no $2$-torsion, hence the equality holds. If $\tilde p = 2$, then $\ZZ/\tilde p^m \ZZ$ modulo squares has one nontrivial class, and there is exactly one nontrivial element in $\ZZ/\tilde p^m \ZZ$ that is $2$-torsion.
 
 Regarding the latter equality, we have that $\Cl \cO_F^x \simeq \Cl^0 F \times \ZZ/d\ZZ$, where $d=\deg x$. As above, $\ZZ/d\ZZ$ modulo squares has a nontrivial class if and only if $d$ is even, and in this case there is only one such class.
\end{proof}


\section{A vertex labelling}
\label{section_vertex_labelling}

\noindent
In this section, we associate to each vertex of $\cG_x$ an element of $\rquot{\Cl F}{2\Cl F}$ and determine how these labels are distributed over the components of $\cG_x$.

\begin{pg}
 Let $\cQ_\AA=\langle a^2\,|\,a\in\AA^\times\rangle$ be the subgroup of squares. We look once more at the determinant map
 $$ \Vertex \cG_x \ = \ \lrquot{G_F}{G_\AA}{K} \ \stackrel{\det}{\longrightarrow} \ \lrquot{F^\times}{\AA^\times}{\cO_\AA^\times \cQ_\AA} \ \simeq \ \rquot{\Cl F}{2\Cl F} \;. $$
 This map assigns to every vertex in $\cG_x$ a label in $\rquot{\Cl F}{2\Cl F}$. Note that $\Cl F/2\Cl F$ has $2h_2$ elements where $h_2=\#(\Cl F)[2]$ for the same reason as used in the proof of Proposition \ref{strong_approx+comp}.
\end{pg}

\begin{prop}
 If the prime divisor $x$ is a square in the divisor class group then all vertices in the same component of $\cG_x$ have the same label, and there are $2h_2$ components, each of which has a different label. Otherwise, the vertices of each component have one of two labels that differ by $x$ in $\rquot{\Cl F}{2\Cl F}$, and two adjacent vertices have different labels, so each connected component is bipartite.
\end{prop}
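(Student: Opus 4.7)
The plan is to track how the label $[\det g]\in\rquot{\Cl F}{2\Cl F}$ transforms along edges of $\cG_x$ and then combine this with the component count from Proposition \ref{strong_approx+comp}.

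First I would compute the effect of traversing a single edge. By Proposition \ref{Phi_x_neighbours}, every edge originating at a vertex $[g]$ has the form $([g],[g\xi_w],m)$ with $\xi_w$ as defined in paragraph \ref{def_xi_w}. Both possible shapes of $\xi_w$ have determinant equal to the idele $\pi_x$ (concentrated at $x$, with $1$ at every other place), so
$$ [\det(g\xi_w)] \ = \ [\det g]\cdot[\pi_x] $$
in $\lrquot{F^\times}{\AA^\times}{\cO_\AA^\times\cQ_\AA}\simeq\rquot{\Cl F}{2\Cl F}$, and under this isomorphism $[\pi_x]$ corresponds to the class of the prime divisor $x$ modulo $2\Cl F$. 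Hence the label is shifted by $[x]\pmod{2\Cl F}$ along every edge.

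Next I would split into the two cases of the proposition. If $x$ is a square in $\Cl F$, the shift is trivial, so the label is constant on each connected component. Writing $[x]=2[D]$ in $\Cl F$ forces $\deg x=2\deg D$ to be even, and Proposition \ref{strong_approx+comp} then produces $2h_2$ components. The label map is surjective, since an arbitrary class $[a]\in\lrquot{F^\times}{\AA^\times}{\cO_\AA^\times\cQ_\AA}$ is hit by the vertex represented by $\smallmat a {} {} 1 $. As the set of components and the group $\rquot{\Cl F}{2\Cl F}$ both have cardinality $2h_2$, the induced surjection on components must be a bijection, so each component carries its own distinct label.

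If $x$ is not a square in $\Cl F$, then $[x]$ is a non-trivial element of $\rquot{\Cl F}{2\Cl F}$ and adjacent vertices necessarily carry different labels. Pulling back the two labels $\ell$ and $\ell\cdot[x]$ that occur on a given component then produces a proper $2$-colouring, proving bipartiteness; the two colours on the component differ by $[x]$ by construction. Both colours actually occur on every component, since every vertex has $q_x+1$ neighbours counted with multiplicity by Proposition \ref{Phi_x_neighbours}.

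The principal subtlety I would be careful about is the meaning of $\pi_x$ inside the matrix $\xi_w$: it is the local uniformizer embedded via $F_x^\times\subset\AA^\times$, not the global element under the diagonal embedding, so its divisor class really is the non-trivial class $[x]\in\Cl F$ rather than $0$. Once this is pinned down, the only remaining work is the cardinality match in the square case and invoking surjectivity of the label map, both of which are routine.
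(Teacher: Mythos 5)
Your proof is correct and follows essentially the same route as the paper: the key computation in both is that $\det(g\xi_w)=\pi_x\det g$, so the label shifts by $[x]$ in $\rquot{\Cl F}{2\Cl F}$ along every edge, after which the square/non-square dichotomy, the surjectivity of the label map via $\smallmat a {} {} 1 $, and the component count from Proposition \ref{strong_approx+comp} finish the argument exactly as in the paper. The only difference is presentational: the paper routes the edge computation through the $\ZZ/2\ZZ$-labelling of the Bruhat--Tits tree and the maps $\Psi_{x,h}$, whereas you work directly on $\cG_x$; your handling of the subtlety that $\pi_x$ is the idele concentrated at $x$ (so $[\det\xi_w]=[x]$, not $0$) is exactly right.
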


\begin{proof}
 First of all, observe that each label is realised, since if we represent a label by some idele $a$, then the vertex represented by $\tinymat a {} {} 1 $ has this label.

 Let $\cQ_x=\langle b^2\,\mid\,b\in F_x^\times\rangle$ and $\Cl F_x = \rquot{F_x^\times}{\cO_x^\times}$, a group isomorphic to $\ZZ$. For the Bruhat-Tits tree $\cT_x$, the determinant map 
 $$ \Vertex\cT_x \ = \ \rquot{G_x}{K_x} \ \stackrel{\det}{\longrightarrow} \ \rquot{F_x^\times}{\cO_x^\times \cQ_x} \simeq \ \rquot{\Cl F_x}{2\Cl F_x} \ \simeq \ \ZZ/2\ZZ  $$
 defines a labelling of the vertices, and the two classes of $\rquot{F_x^\times}{\cO_x^\times \cQ_x}$ are represented by $1$ and $\pi_x$. Two adjacent vertices have the different labels since for $g\in G_x$ and $\xi_w$ as in Definition \ref{def_Bruhat-Tits}, $\det(g\xi_w)=\pi_x\det g$ represents a class different from $\det g$ in $\Vertex \cT_x$.

 Define for $a\in\AA^\times$ a map $\psi_{x,a}: \rquot{F_x^\times}{\cO_x^\times \cQ_x} \ \to \ \lrquot{F^\times}{\AA^\times}{\cO_\AA^\times \cQ_\AA}$ by $\psi_{x,a}([b])=[ab]$, where $b$ is viewed as the idele concentrated in $x$. For every $h\in G_\AA$ we obtain a commutative diagram
 $$ \xymatrix{\Vertex \cT_x  \ar[d] \ar@{}|{\displaystyle =}[r] & \rquot{G_x}{K_x} \ar[d]^{\det} \ar[r]^{\Psi_{x,h}\hspace{0.35cm} }& \lrquot{G_F}{G_\AA}{K} \ar[d]^{\det} \ar@{}|{\displaystyle =}[r] & \Vertex \cG_x \ar[d] \\
              \rquot{\Cl F_x}{2\Cl F_x} \ar@{}|{\displaystyle \simeq}[r] & \rquot{F_x^\times}{\cO_x^\times \cQ_x} \ar[r]^{\psi_{x,\det h}\hspace{0.35cm} } & \lrquot{F^\times}{\AA^\times}{\cO_\AA^\times \cQ_\AA} \ar@{}|{\displaystyle \simeq}[r]  & \rquot{\Cl F}{2\Cl F}                                                                       \;. } $$
 This means that vertices with equal labels map to vertices with equal labels.
 
 Each component of $\cG_x$ lies in the image of a suitable $\Psi_{x,h}$, thus has at most two labels.
 On the other hand, the two labels of $\cT_x$ map to $\psi_{x,\det h}([1])=[a]$ and $\psi_{x,\det h}([\pi_x])=[a\pi_x]$. 
 The divisor classes of $[a]$ and $[a\pi_x]$ differ by the class of the prime divisor $x$, and are equal if and only if
 $x$ is a square in the divisor class group. If so, according to Proposition \ref{strong_approx+comp},
 there must be $2h_2$ components so that the $2h_2$ labels are spread over all components. 
 If $x$ is not a square then by the local surjectivity
 of $\Psi_{x,h}$ on edges two adjacent vertices of $\cG_x$ also have different labels.
\end{proof}





\section{Geometric interpretation of unramified Hecke operators}
\label{section_geometric_interpretation}

\noindent
 A fundamental observation in the geometric Langland's program (for $\PGL_2$, in this case) is that the domain of automorphic forms (with a certain ramification level) corresponds to the isomorphism classes of $\PP^1$-bundles (with a corresponding level structure). The action of Hecke operators can be given a geometric meaning, which makes it possible to let algebraic geometry enter the field. We will use this geometric view point for a closer examination of the graphs of unramified Hecke operators. We begin with recalling the geometric interpretation of unramified Hecke operators. For more reference, see \cite{Gaitsgory}.

\begin{pg}
 Let $\cO_X$ be the structure sheaf of the smooth projective curve $X$ and $\eta$ the generic point. We can identify the stalks $\cO_{X,x}$ of the structure sheaf $\cO_X$ at closed points $x\in\norm X$ and their embeddings into the generic stalk $\cO_{X,\eta}$ with
 $$ \cO_{X,x} \ \simeq \ \cO_x \cap F \ \longhookrightarrow \ F \ \simeq \ \cO_{X,\eta} \;. $$

 \label{det_and_deg}
 We identify vector bundles on $X$ with the corresponding locally free sheaf (\cite[Ex. II.5.18]{Hartshorne}). We denote by $\Bun_nX$ the set of isomorphism classes of {\it rank $n$ bundles} over $X$ and by $\Pic X$ the {\it Picard group}. For $\cL_1,\cL_2\in\Pic X$, we use the shorthand notation $\cL_1\cL_2$ for $\cL_1\otimes\cL_2$. The group $\Pic X$ acts on $\Bun_nX$ by tensor products. Let $\PBun_nX$ be the orbit set $\Bun_nX\,/\,\Pic X$, which is nothing else but the set of isomorphism classes of $\PP^{n-1}$-bundles over $X$ (\cite[Ex. II.7.10]{Hartshorne}). 

 We will call the elements of $\PBun_2 X$ {\it projective line bundles}. If we regard the total space of a projective line bundle as a scheme, then we obtain nothing else but a ruled surface, cf.\ \cite[Prop. V.2.2]{Hartshorne}. Thus $\PBun_2 X$ may also be seen as the set of isomorphism classes of ruled surfaces over $X$.

 If two vector bundles $\cM_1$ and $\cM_2$ are in the same orbit of the action of $\Pic X$, we write $\cM_1 \sim\cM_2$ and say that $\cM_1$ and $\cM_2$ are {\it projectively equivalent}. By $[\cM]\in\PBun_2X$, we mean the class that is represented by the rank $2$ bundle $\cM$.
 
 Let $\Cl X=\Cl F$ be the divisor group of $X$. Every divisor $D\in\Cl X$ defines the \emph{associated line bundle} $\cL_D$, which defines an isomorphism $\Cl X\to\Pic X$ of groups (\cite[Prop. II.6.13]{Hartshorne}). The degree $\deg\cM$ of a vector bundle $\cM$ with $\det \cM \simeq \cL_D$ is defined as $\deg D$. For a torsion sheaf $\cF$, the degree is defined by $\deg \cF = \sum_{x\in\norm X} \dim_{\FF_q}(\cF_x)$. The degree is additive in short exact sequences.
\end{pg}

\begin{rem}
 Note that if $D=x$ is a prime divisor, the notation for the associated line bundle $\cL_x$ coincides with the notation for the stalk of $\cL$ at $x$. In order to avoid confusion, we will reserve the notation $\cL_x$ strictly for the associated line bundle. In case we have to consider the stalk of a line bundle, we will use a symbol different from $\cL$ for the line bundle.
\end{rem}

\begin{pg}
 \label{GL_n-Bun_n}\label{bundle_to_GL_n}
 We associate to every $g=(g_x)\in\GL_2(\AA)$ the rank $2$ bundle $\cM_g$ that is defined by the embeddings $g_x^{-1}:\cO_{X,x}^2\to F^2$  of the stalks $(\cM_g)_x=\cO_{X,x}^2$ at closed points $x$ into the generic stalk $(\cM_g)_\eta=F^2$. This association induces a bijection
 $$ \begin{array}{ccc}
     \lrquot{\GL_2(F)}{\GL_2(\AA)}{\GL_2(\cO_\AA)} & \overset{1:1}{\longleftrightarrow} & \Bun_2X \\
	                          {[g]}              & \longmapsto                    & \cM_g
	 \end{array} $$
 such that $\cM_g\otimes\cL_a=\cM_{ag}$ for $a\in\AA^\times$, and $\deg \cM_g = \deg(\det g)$. Consequently, there is a bijection
 $$ \lrquot{G_F}{G_\AA}{K} \quad \overset{1:1}{\longleftrightarrow} \quad \PBun_2X, $$
 which allows us to identify the vertex set $\Vertex\cG_x=\lrquot{G_F}{G_\AA}{K}$ with $\PBun_2X$.
\end{pg}

\begin{pg}
 \label{def_cK_x}
 The next task is to describe edges of $\cG_x$ in geometric terms. We say that two exact sequences of sheaves
 $$ 0\to\cF_1\to\cF\to\cF'_1\to0 \hspace{1cm} \textrm{and} \hspace{1cm} 0\to\cF_2\to\cF\to\cF'_2\to0\;, $$
 are {\it isomorphic with fixed $\cF$} if there are isomorphisms $\cF_1\to\cF_2$ and $\cF'_1\to\cF'_2$ such that
 $$ \xymatrix{0\ar[r]&{\cF_1}\ar[r]\ar[d]^\simeq&{\cF}\ar[r]\ar@{=}[d]&{\cF'_1}\ar[r]\ar[d]^\simeq&0\\ 0\ar[r]&{\cF_2}\ar[r]&{\cF}\ar[r]&{\cF'_2}\ar[r]&0} $$
 commutes.
 
 Let $\cK_x$ be the torsion sheaf that is supported at $x$ and has stalk $\kappa_x$ at $x$, where $\kappa_x$ is the residue field at $x$. Fix a representative $\cM$ of $[\cM]\in\PBun_2 X$. Then we define $m_x([\cM],[\cM'])$ as the number of isomorphism classes of exact sequences
 $$ \ses{\cM''}{\cM}{\cK_x} \;, $$
 with fixed $\cM$ and with $\cM''\sim\cM'$. This number is independent of the choice of the representative $\cM$ because for another choice, which would be a vector bundle of the form $\cM\otimes\cL$ for some $\cL\in\Pic X$, we have the bijection
 $$ \begin{array}{ccc}
    \left\{\begin{array}{c} \text{isomorphism classes}\\0\to\cM''\to\cM\to\cK_x\to0\\ \text{with fixed }\cM\end{array}\right\} \vspace{0.5cm}&\quad\longrightarrow\quad & \left\{\begin{array}{c}\text{isomorphism classes}\\0\to\cM'''\to\cM\otimes\cL\to\cK_x\to0\\ \text{with fixed }\cM\otimes\cL\end{array}\right\} \;. \\ (0\to\cM''\to\cM\to\cK_x\to0) & \longmapsto & (0\to\cM''\otimes\cL\to\cM\otimes\cL\to\cK_x\to0) 
    \end{array} $$
\end{pg}

\begin{df}
 Let $x$ be a place. For a projective line bundle $[\cM]\in\PBun_2X$ we define
 $$ \cU_x([\cM]) \ = \ \left\{ ([\cM],[\cM'],m) \ \left|\ m=m_x([\cM],[\cM'])\neq0 \right.\right\} \;, $$
 and call the occurring $[\cM']$ the {\it $\Phi_x$-neighbours of $[\cM]$}, and $m_x([\cM],[\cM'])$ their {\it multiplicity}.
\end{df}

\begin{pg}
 \label{geom_nb}
 We shall show that this concept of neighbours is the same as the one defined for classes in $\lrquot{G_F}{G_\AA}{K}$ (Definition \ref{def_graph}). Recall that in Proposition \ref{Phi_x_neighbours}, we determined the $\Phi_x$-neighbours of a class $[g]\in\lrquot{G_F}{G_\AA}{K}$ to be of the form $[g\xi_w]$ for a $w\in\PP^1(\kappa_x)$. The elements $\xi_w$ define exact sequences
 $$ \xymatrix{ 0 \ar[r] & *!<0pt,5pt>{\prod\limits_{y\in\norm X} \cO_{X,y}^2} \ar[r]^{\xi_w} & *!<0pt,5pt>{\prod\limits_{y\in\norm X} \cO_{X,y}^2} \ar[r] & *!<0pt,1pt>{\ \kappa_x} \ar[r] & 0 } \;, $$
  of $\FF_q$-modules and consequently an exact sequence
 $$ \xymatrix{ 0 \ar[r] & {\cM_{g\xi_w}} \ar[r] &{\cM_g} \ar[r] & {\cK_x} \ar[r] & 0 } \;. $$
 of sheaves, where $\cM_{g\xi_w}$ and $\cM_g$ are the rank $2$ bundles associated to $g\xi_w$ resp.\ $g$.
 This maps $w\in\PP^1(\kappa_x)$ to the isomorphism class of $\bigr( 0\to\cM_{g\xi_w}\to\cM_g\to\cK_x\to0 \bigl)$ with fixed $\cM_g$. On the other hand, as we have chosen a basis for the stalk at $x$, each isomorphism class 
 of sequences $\bigr( 0\to\cM'\to\cM\to\cK_x\to0 \bigl)$ with fixed $\cM$ defines an element in 
 $\PP\bigl(\rquot{\cO_{X,x}^2}{(\pi_x\cO_{X,x}})^2\bigr)=\PP^1(\kappa_x)$, which gives back $w$. 

 \label{geometric_neighbourhood}
 Thus for every $x\in\norm X$, the map
 $$ \begin{array}{ccc}
     \cU_x([g]) & \longrightarrow & \cU_x([\cM_g]) \\
	  ([g],[g'],m)  & \longmapsto     & ([\cM_g],[\cM_{g'}],m)
	 \end{array} $$
 is a well-defined bijection. We finally obtain the geometric description of the graph $\cG_x$ of $\Phi_x$.
\end{pg}

\begin{prop}
 \label{geometric_graph}
 Let $x\in\norm X$. The graph $\cG_x$ of $\Phi_x$ is described in geometric terms as
 \begin{align*} 
   \Vertex \cG_x \ &= \ \PBun_2 X\quad \textrm{ and} \\
   \Edge \cG_x \ &= \ \coprod_{[\cM]\in\PBun_2X} \cU_x([\cM]) \;. \mbox{\qed} 
 \end{align*}
\end{prop}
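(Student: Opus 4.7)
The plan is to assemble the proposition from the three explicit bijections already set up in the preceding paragraphs. First, the vertex identification $\Vertex\cG_x = \PBun_2 X$ is exactly the bijection recorded in paragraph \ref{bundle_to_GL_n}, where $[g] \mapsto [\cM_g]$ arises from the local embeddings $g_y^{-1}\colon\cO_{X,y}^2\hookrightarrow F^2$. No further work is required on the vertex side.

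Next I would match edges by transporting Proposition \ref{Phi_x_neighbours} across this bijection. That proposition says the $\Phi_x$-neighbours of $[g]$ are the classes $[g\xi_w]$ for $w\in\PP^1(\kappa_x)$, and the multiplicity of an edge from $[g]$ to $[g']$ counts the $w$'s with $[g\xi_w]=[g']$. Paragraph \ref{geom_nb} shows that $\xi_w$, viewed on stalks, yields a short exact sequence $0\to\cM_{g\xi_w}\to\cM_g\to\cK_x\to 0$ with fixed middle term $\cM_g$. It therefore remains only to argue that the assignment $w\mapsto$ (isomorphism class of this sequence) is a bijection between $\PP^1(\kappa_x)$ and the set of isomorphism classes of exact sequences $0\to\cM''\to\cM_g\to\cK_x\to 0$ with fixed $\cM_g$.

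For this step I would produce the inverse directly: any such sequence is determined by its kernel, and applying $-\otimes_{\cO_X}\kappa_x$ to $\cM_g\twoheadrightarrow\cK_x$ identifies that kernel with a line in the two-dimensional fibre $\cM_g\otimes_{\cO_X}\kappa_x\cong (\cO_{X,x}/\pi_x)^2\cong\kappa_x^2$, i.e.\ an element of $\PP^1(\kappa_x)$. Unravelling the chosen stalk trivialisation $(\cM_g)_x=\cO_{X,x}^2$ shows that this inverse recovers $w$. Grouping the resulting $w$'s by the projective class of the kernel then matches $\cU_x([g])$ with $\cU_x([\cM_g])$ together with multiplicities, completing the edge identification.

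The main obstacle I foresee, which I would address as a preliminary lemma, is the independence of $m_x([\cM],[\cM'])$ of the chosen representative $\cM$. This is precisely the bijection displayed at the end of paragraph \ref{def_cK_x}, obtained by tensoring each sequence with an arbitrary $\cL\in\Pic X$; one uses $\cK_x\otimes\cL\simeq\cK_x$, which holds because $\cL$ is locally free of rank one and $\cK_x$ is a skyscraper at a single point. With this well-definedness secured, the three bijections stack to produce the stated description of $\cG_x$.
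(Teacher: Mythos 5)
Your proposal is correct and follows essentially the same route as the paper: the vertex identification is the bijection of paragraph \ref{bundle_to_GL_n}, the edge identification is the correspondence of paragraph \ref{geom_nb} between $w\in\PP^1(\kappa_x)$ and isomorphism classes of sequences $0\to\cM''\to\cM_g\to\cK_x\to0$ with fixed $\cM_g$ (read off from the line in the fibre $\cM_g\otimes\kappa_x$), and the well-definedness of $m_x$ is the tensoring bijection at the end of paragraph \ref{def_cK_x}. The paper likewise treats the proposition as a summary of these three facts, which is why it carries a \qed in the statement.
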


\begin{rem}
 \label{connection_to_Trees}
 This interpretation shows that the graphs that we consider are a global version of the graphs of Serre (\cite[Chapter II.2]{Serre}). We are looking at all rank $2$ bundles on $X$ modulo the action of the Picard group of $X$ while Serre considers rank $2$ bundles that trivialise outside a given place $x$ modulo line bundles that trivialise outside $x$. As already explained in Remark \ref{comp_with_Serre}, we obtain a projection of the graph of Serre to the component of the trivial class $c_0$.

 Serre describes his graphs as quotients of Bruhat-Tits trees by the action of the group $\Gamma=G_{\cO_F^x}$ on both vertices and edges. This leads in general to multiple edges between vertices in the quotient graph, see e.g.\ \cite[2.4.2c]{Serre}. This does not happen with graphs of Hecke operators: there is at most one edge with given origin and terminus. 

 Relative to the action of $\Gamma$ on Serre's graphs, one can define the weight of an edge as the order of the stabiliser of its origin in the stabiliser of the edge. The projection from Serre's graphs to graphs of Hecke operators identifies all the different edges between two vertices, adding up their weights to obtain the weight of the image edge.
\end{rem}


\section{Description of vertices}
\label{section_vertices}

\noindent
 The aim of this section is to show that the set of isomorphism classes of projective line bundles over $X$ can be separated into subspaces corresponding to certain quotients of the the divisor class group of $F$, the divisor class group of $\FF_{q^2} F$ and geometrically indecomposable projective line bundles. We recall a series of facts about vector bundles.

\begin{pg}
 \label{facts_on_vb}
 A vector bundle $\cM$ is {\it indecomposable} if for every decomposition $\cM=\cM_1\oplus\cM_2$ into two subbundles $\cM_1$ and $\cM_2$, one factor is trivial and the other is isomorphic to $\cM$. The Krull-Schmidt theorem holds for the category of vector bundles over $X$, i.e.\ every vector bundle $\cM$ on $X$ defined over $\FF_q$ has, up to permutation of factors, a unique decomposition into a direct sum of indecomposable subbundles, see \cite[Thm. 2]{Atiyah2}.

 The map $p: X'=X\otimes \FF_{q^i} \to X$ defines the \emph{inverse image} or the \emph{constant extension} of vector bundles
 $$ \begin{array}{cccc}
     p^\ast: & \Bun_n X & \longrightarrow & \Bun_n X' \;. \\
	          & \cM      & \longmapsto     & p^\ast\cM
    \end{array} $$
 The isomorphism classes of rank $n$ bundles that after extension of constants to $\FF_{q^i}$ become isomorphic to $p^\ast\cM$ are classified by $H^1\bigr(\Gal(\FF_{q^i}/\FF_q),\Aut(\cM\otimes\FF_{q^i})\bigl)$, cf.\ \cite[Section 1]{Arason-Elman-Jacob}. The algebraic group $\Aut(\cM\otimes\FF_{q^i})$ is an open subvariety of the connected algebraic group $\End(\cM\otimes\FF_{q^i})$, and thus it is itself a connected algebraic group. As a consequence of Lang's theorem (\cite[Cor.\ to Thm.\ 1]{Lang3}), we have $H^1\bigr(\Gal(\FF_{q^i}/\FF_q),\Aut(\cM\otimes\FF_{q^i})\bigl)=1$.
 
 Thus $p^\ast$ is injective. In particular, one can consider the constant extension to the geometric curve $\overline X=X\otimes\overline\FF_q$ over an algebraic closure $\overline\FF_q$ of $\FF_q$. Then two vector bundles are isomorphic if and only if they are geometrically isomorphic, i.e.\ that their constant extensions to $\overline X$ are isomorphic. We can therefore think of $\Bun_n X$ as a subset of $\Bun_n X'$ and $\Bun_n \overline X$.
 
 On the other hand, $p:X'\to X$ defines the direct image or the {\it trace} of vector bundles
 $$ \begin{array}{cccc}
     p_\ast: & \Bun_n X' & \longrightarrow & \Bun_{ni} X \;. \\
          	 & \cM       & \longmapsto     & p_\ast \cM
	 \end{array} $$ 
 We have for $\cM \in \Bun_n X$ that $ p_\ast p^\ast \cM\simeq\cM^i$ and for $\cM\in\Bun_nX'$ that $p^\ast p_\ast \cM \simeq \bigoplus \cM^\tau$ where $\tau$ ranges over $\Gal(\FF_{q^i}/\FF_q)$ and $\cM^\tau$ is defined by the stalks $\cM^\tau_x=\cM_{\tau^{-1}(x)}$.
 
 We call a vector bundle {\it geometrically indecomposable} if its extension to $\overline X$ is indecomposable. In \cite[Thm. 1.8]{Arason-Elman-Jacob}, it is shown that every indecomposable vector bundle over $X$ is the trace of an geometrically indecomposable bundle over some constant extension $X'$ of $X$.

 There are certain compatibilities of the constant extension and the trace with tensor products. Namely, for a vector bundle $\cM$ and a line bundle $\cL$ over $X$, we have $ p^\ast(\cM\otimes\cL) \simeq p^\ast\cM \otimes p^\ast\cL$
 and for a vector bundle $\cM'$ over $X'$, $ p_\ast\cM' \otimes \cL \simeq p_\ast(\cM'\otimes p^\ast\cL)$. Thus $p^\ast$ induces a map, denoted by the same symbol,
 $$ \begin{array}{cccc}
     p^\ast: & \PBun_n X & \longrightarrow & \PBun_n X' \;, \\
	          & {[\cM]}   & \longmapsto     & [p^\ast\cM]
	 \end{array} $$
 and $p_\ast$ induces
 $$ \begin{array}{cccc}
     p_\ast: & \Bun_n X'\,/\,p^\ast\Pic X & \longrightarrow & \PBun_{ni} X \;. \\
	          & {[\cM]}       & \longmapsto     & [p_\ast \cM]
	 \end{array} $$ 
\end{pg}

\begin{pg}
 \label{labelling_vertices}
 We look at the situation for $n=2$ and $i=2$. Let $\sigma$ be the nontrivial automorphism of $\FF_{q^2} / \FF_q$. The set $\PBun_2 X$ is the disjoint union of the set of classes of decomposable rank $2$ bundles, i.e.\ rank $2$ bundles that are isomorphic to the direct sum of two line bundles, and the set of classes of indecomposable bundles. We denote these sets by $\PBundec X$ and $\PBunindec X$, respectively. Let $\PBungi X\subset\PBunindec X$ be the subset of classes of geometrically indecomposable bundles. Since the rank is $2$, the complement $\PBuntr X=\PBunindec X-\PBungi X$ consists of classes of traces $p_\ast\cL$ of certain line bundles $\cL\in\Pic X'$ that are defined over the quadratic extension $X'=X\otimes\FF_{q^2}$. More precisely, $p_\ast \cL$ decomposes if and only if $\cL\in p^\ast\Pic X$, and then $p_\ast \cL \sim \cO_X \oplus \cO_X$. Thus, we have a disjoint union
 $$ \PBun_2 X \ = \ \PBundec X \ \amalg \ \PBuntr X \ \amalg \ \PBungi X \;. $$
 
 For $[D]\in\Cl X$, define
 $$ c_D \ = \ [\cL_D \oplus \cO_X] \ \in \ \PBundec X \;, $$
 and for a $[D]\in\Cl X'$, define
 $$ t_D \ = \ [p_\ast\cL_D] \ \in \ \PBuntr X \cup \{ c_0 \} \;. $$ 
 Note that $\sigma$ acts on $\Cl X'$ in a way compatible with the identification $\Cl X' \simeq \Pic X'$. 
 Since $p^\ast p_\ast(\cL)\simeq\cL\oplus\cL^\sigma\simeq p^\ast p_\ast(\cL^\sigma)$ for $\cL\in\Pic X'$, 
 and isomorphism classes of vector bundles are stable under constant extensions, we have $t_D = t_{\sigma D}$.

 We derive the following characterisations of $\PBundec X$ and $\PBuntr X$:
\end{pg}

\begin{prop}
 \label{char_PBundec}
 $$ \begin{array}{ccc}
     \Cl X & \longrightarrow & \PBundec X \\
	  {[D]}   & \longmapsto     & c_D
	 \end{array} $$
 is surjective with fibres of the form $\{[D],[-D]\}$.
\end{prop}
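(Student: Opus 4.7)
The plan is to first verify surjectivity by a direct tensoring argument, and then to identify the fibres using the Krull--Schmidt theorem that was recalled in paragraph \ref{facts_on_vb}.

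For surjectivity, I would start with an arbitrary element of $\PBundec X$ represented by a decomposable rank $2$ bundle $\cL_1\oplus\cL_2$, where $\cL_1,\cL_2\in\Pic X$. Tensoring with $\cL_2^{-1}$ gives $(\cL_1\otimes\cL_2^{-1})\oplus\cO_X$, which is projectively equivalent to the original bundle. Writing $\cL_1\otimes\cL_2^{-1}\simeq\cL_D$ for the divisor class $[D]=[\cL_1]-[\cL_2]\in\Cl X$, one obtains $c_D$ as the given class, which shows the map hits every element of $\PBundec X$.

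For the fibre computation, suppose $c_D=c_{D'}$, i.e.\ $\cL_D\oplus\cO_X\sim\cL_{D'}\oplus\cO_X$. Then there exists $\cL\in\Pic X$ with
$$ \cL_D\oplus\cO_X\ \simeq\ (\cL\otimes\cL_{D'})\oplus\cL\;. $$
Both sides are direct sums of two line bundles, which are indecomposable. By the Krull--Schmidt theorem (\cite[Thm.\ 2]{Atiyah2}), as cited in paragraph \ref{facts_on_vb}, the multisets of isomorphism classes of indecomposable summands agree. Thus either $\cL\simeq\cO_X$ and $\cL\cL_{D'}\simeq\cL_D$, giving $[D']=[D]$; or $\cL\simeq\cL_D$ and $\cL\cL_{D'}\simeq\cO_X$, giving $\cL_{D'}\simeq\cL_D^{-1}\simeq\cL_{-D}$ and hence $[D']=-[D]$. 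Conversely, $c_D=c_{-D}$ follows by tensoring $\cL_D\oplus\cO_X$ with $\cL_{-D}$, which produces $\cO_X\oplus\cL_{-D}$. This establishes that the fibre over $c_D$ is exactly $\{[D],[-D]\}$.

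The only conceptual input is Krull--Schmidt, which has already been invoked in the paper; the rest is pure bookkeeping with tensor products in $\Pic X$, so I do not expect any genuine obstacle.
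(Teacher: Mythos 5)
Your proof is correct and follows essentially the same route as the paper: surjectivity by tensoring with $\cL_2^{-1}$, and the fibre computation by comparing the two decompositions into line-bundle summands (the paper leaves the Krull--Schmidt step implicit where you spell it out, and you add the easy converse check that $c_{-D}=c_D$). No issues.
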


\begin{proof}
 Let $\cM$ decompose into $\cL_1\oplus\cL_2$. Then
 $$ \cM \ \simeq \ \cL_1\oplus\cL_2 \ \sim \ \bigl(\cL_1\oplus\cL_2\bigr)\otimes\cL_2^{-1} \ \simeq \ \cL_1\cL_2^{-1} \oplus \cO_X \;, $$
 thus surjectivity follows. Let $\cL_{D'}\oplus\cO_X$ represent the same projective line bundle
 as $\cL_D \oplus \cO_X$, then there is a line bundle $\cL_0$ such that
 $$ \cL_D \oplus \cO_X \ \simeq \ \bigl(\cL_{D'}\oplus\cO_X\bigr)\otimes\cL_0 \;, $$
 and thus either $\cL_0\simeq\cO_X$ and $\cL_D \simeq \cL_{D'}$ or $\cL_0\simeq\cL_D$ and $\cL_{D'}\otimes\cL_D \simeq \cO_X$.
 Hence $[D']$ equals either $[D]$ or $[-D]$.
\end{proof} 

\begin{prop}
 \label{char_PBuntr}
 $$ \begin{array}{ccc}
     \Cl X'\,/\,\Cl X & \longrightarrow & \PBuntr X \cup \{c_0\} \\
	  {[D]}            & \longmapsto     & t_D
	 \end{array} $$
 is surjective with fibres of the form $\{[D],[-D]\}$.  
\end{prop}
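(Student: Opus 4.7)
The plan is to verify well-definedness on the quotient by $p^\ast\Cl X$, then surjectivity, and finally to compute the fibres by applying $p^\ast$ and invoking Krull--Schmidt, with a short determinant computation at the end to identify $[\sigma D]$ with $[-D]$.

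For well-definedness, for any $D_0\in\Cl X$ the projection formula (a special case of the compatibility $p_\ast\cM'\otimes\cL\simeq p_\ast(\cM'\otimes p^\ast\cL)$ recalled in \ref{facts_on_vb}) yields
\[ p_\ast\cL_{D+p^\ast D_0} \ \simeq \ p_\ast(\cL_D\otimes p^\ast\cL_{D_0}) \ \simeq \ p_\ast\cL_D\otimes\cL_{D_0}, \]
so $t_{D+p^\ast D_0}=t_D$ in $\PBun_2X$. Surjectivity is almost immediate from \ref{labelling_vertices}: every element of $\PBuntr X$ is by construction of the form $[p_\ast\cL_D]$ for some $D\in\Cl X'$, and $c_0=[\cO_X\oplus\cO_X]=[p_\ast p^\ast\cO_X]=t_0$.

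For the fibre description, I would assume $t_D=t_{D'}$, i.e.\ $p_\ast\cL_{D'}\simeq p_\ast\cL_D\otimes\cL_{D_0}$ for some $D_0\in\Cl X$. By the projection formula the right hand side equals $p_\ast\cL_{D+p^\ast D_0}$, and replacing $D$ by $D+p^\ast D_0$ (harmless in $\Cl X'/\Cl X$) reduces me to $p_\ast\cL_D\simeq p_\ast\cL_{D'}$. Applying $p^\ast$ and invoking the identity $p^\ast p_\ast\cL\simeq\cL\oplus\cL^\sigma$ from \ref{facts_on_vb} produces an isomorphism
\[ \cL_D\oplus\cL_{\sigma D} \ \simeq \ \cL_{D'}\oplus\cL_{\sigma D'} \]
in $\Bun_2X'$. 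Since line bundles are indecomposable, the Krull--Schmidt theorem (also recalled in \ref{facts_on_vb}) forces $\cL_{D'}\simeq\cL_D$ or $\cL_{D'}\simeq\cL_{\sigma D}$, i.e.\ $D'\equiv D$ or $D'\equiv\sigma D$ in $\Cl X'$.

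It remains to identify $[\sigma D]$ with $[-D]$ in $\Cl X'/\Cl X$. Taking determinants in $p^\ast p_\ast\cL_D\simeq\cL_D\oplus\cL_{\sigma D}$ gives $p^\ast(\det p_\ast\cL_D)\simeq\cL_{D+\sigma D}$, so $D+\sigma D\in p^\ast\Cl X$ and hence $[\sigma D]=-[D]$ modulo $\Cl X$. Combined with the previous step the fibre of $t_D$ is precisely $\{[D],[-D]\}$. I expect the most delicate point to be the passage through $p^\ast$ followed by the Krull--Schmidt step on $X'$: one must make sure that the isomorphism obtained after pulling back really does split into line-bundle summands in only the two obvious ways. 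Once this is carefully justified by the facts collected in \ref{facts_on_vb}, the remaining steps are routine manipulations with the projection formula and determinants.
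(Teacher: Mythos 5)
Your proposal is correct and follows essentially the same route as the paper: apply $p^\ast$ to an assumed isomorphism $p_\ast\cL_{D}\simeq p_\ast\cL_{D'}\otimes\cL_0$, use $p^\ast p_\ast\cL\simeq\cL\oplus\cL^\sigma$ and uniqueness of the decomposition to conclude $[D']=[D]$ or $[D']=[\sigma D]$, and then identify $[\sigma D]=[-D]$ in $\Cl X'/\Cl X$ via $D+\sigma D\in p^\ast\Cl X$. Your explicit appeals to Krull--Schmidt and to the determinant for the last step only make precise what the paper leaves implicit.
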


\begin{proof}
 From the previous considerations it is clear that this map is well-defined and surjective. Assume that $[D_1], [D_2]\in\Cl X'$ have the same image, then there is a $\cL_0 \in \Pic X$ such that
 $$ p_\ast\cL_1 \ \simeq \ p_\ast\cL_2 \otimes \cL_0 \;, $$
 where we briefly wrote $\cL_i$ for $\cL_{D_i}$. Then in $\PBun_2 X'$, we see that
 \begin{align*} 
  \cL_1 \oplus \cL_1^\sigma \ &\simeq \  p^\ast p_\ast\cL_1 \\ 
  &\simeq \ p^\ast p_\ast\cL_2 \otimes p^\ast \cL_0 \\ 
  &\simeq \ (\cL_2\otimes p^\ast\cL_0)\oplus(\cL_2^\sigma\otimes p^\ast\cL_0) \;,
 \end{align*}  
 thus either $\cL_1 \simeq \cL_2 \otimes p^\ast \cL_0$, which implies that $D_1$ and $D_2$ represent the same class in $\Cl X'\,/\,\Cl X$, or $\cL_1 \simeq \cL_2^\sigma \otimes p^\ast \cL_0$, which means that $D_1$ represents the same class as $\sigma D_2$. But in $\Cl X'\,/\,\Cl X$,
 $$ [\sigma D_2] \ = \ [\, \underbrace{\hspace{-0,2cm}\begin{array}{c}\vspace{-0,4cm}\\ \sigma D_2+D_2\\ \vspace{-0,4cm}\end{array}\hspace{-0,2cm}}_{\in\Cl X} \,-D_2] \ = \ [-D_2] \;.\mbox{\qedhere} $$
\end{proof}

\begin{lemma}
 \label{injective_const_ext}
 The constant extension restricts to an injective map 
 $$ p^\ast:\ \PBundec X \ \amalg \ \PBuntr X \ \longhookrightarrow \ \PBundec X'\;. $$
\end{lemma}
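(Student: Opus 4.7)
The plan is to split the injectivity of $p^\ast$ on $\PBundec X \amalg \PBuntr X$ into three sub-claims: (a) $p^\ast$ is injective on $\PBundec X$; (b) $p^\ast$ is injective on $\PBuntr X$; (c) the images of $\PBundec X$ and $\PBuntr X$ in $\PBundec X'$ are disjoint.

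For (a) I would invoke Proposition \ref{char_PBundec} for both $X$ and $X'$. Since $p^\ast(c_D)=c_{p^\ast D}$, the equality $p^\ast(c_{D_1})=p^\ast(c_{D_2})$ translates to $p^\ast D_1=\pm p^\ast D_2$ in $\Cl X'$. Injectivity of $p^\ast\colon\Cl X\to\Cl X'$, a consequence of Hilbert 90 for $\FF_{q^2}/\FF_q$ (equivalently, the Hochschild--Serre sequence for $\mathbb{G}_m$), then yields $D_1=\pm D_2$, so $c_{D_1}=c_{D_2}$.

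For (b) I would transpose the argument of Proposition \ref{char_PBuntr} to $X'$. If $p^\ast(t_{D_1})=p^\ast(t_{D_2})$, pick $\cL_0\in\Pic X'$ with $\cL_{D_1}\oplus\cL_{D_1}^\sigma\simeq(\cL_{D_2}\otimes\cL_0)\oplus(\cL_{D_2}^\sigma\otimes\cL_0)$. Krull--Schmidt yields two possible matchings of summands, and in each, comparing the isomorphism with its $\sigma$-pullback forces $\cL_0^\sigma\simeq\cL_0$. Galois descent---any $\sigma$-invariant line bundle on $X'$ is pulled back from $X$---then gives $\cL_0=p^\ast\cL_0'$ for some $\cL_0'\in\Pic X$. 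Substituting into the two Krull--Schmidt cases yields $[D_1]=[D_2]$ or $[D_1]=[-D_2]$ in $\Cl X'/\Cl X$, whence $t_{D_1}=t_{D_2}$ by Proposition \ref{char_PBuntr}.

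Part (c) I expect to be the main obstacle. Suppose $p^\ast(c_D)=p^\ast(t_{D'})$ with $[D']\neq 0$ in $\Cl X'/\Cl X$. Then $\cL_{p^\ast D}\oplus\cO_{X'}$ is projectively equivalent to $\cL_{D'}\oplus\cL_{D'}^\sigma$ on $X'$. Krull--Schmidt forces $p^\ast D=\pm(D'-D'^\sigma)$ in $\Cl X'$; applying $\sigma$ yields $p^\ast D=-p^\ast D$, so $p^\ast D$ is $2$-torsion. The hard part is then to exploit the tension between $p^\ast D$ being $\sigma$-invariant (by construction) and $\sigma$-anti-invariant (as $D'-D'^\sigma$) to force $[D']=0$ modulo $\Cl X$. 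I anticipate this requires a careful control of the Tate cohomology of $\Pic X'$ as a $\langle\sigma\rangle$-module---specifically the relation between $\ker(1+\sigma)$, the image of $1-\sigma$, and $p^\ast\Cl X$---for which the vanishing of both $H^1$ and $H^2$ of $\FF_{q^2}^\times$ (Hilbert 90 and norm surjectivity over finite fields) together with the triviality of the Brauer group of $X$ over $\FF_q$ will be essential.
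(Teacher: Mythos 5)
Your parts (a) and (b) are correct and are essentially the paper's own argument: injectivity on $\PBundec X$ reduces via Proposition \ref{char_PBundec} to the injectivity of $\Cl X/\inv\to\Cl X'/\inv$, and injectivity on $\PBuntr X$ reduces, after writing $p^\ast(t_D)=c_{D-\sigma D}$, to the observation that $(D_1-\sigma D_1)=\pm(D_2-\sigma D_2)$ forces $D_1\mp D_2$ to be $\sigma$-invariant and hence to lie in $\Cl X$; your Krull--Schmidt/Galois-descent phrasing is the same computation. The genuine gap is part (c), which you leave as an announced strategy rather than a proof --- and your instinct that it is the hard point is exactly right, because the Tate-cohomology bookkeeping you propose does not close the argument: it shows the opposite. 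By Lang's theorem, $\hat H^{-1}(\langle\sigma\rangle,\Cl^0X')\simeq H^1(\Gal(\FF_{q^2}/\FF_q),\Cl^0X')$ vanishes, so $\mathrm{im}(1-\sigma)=\ker(1+\sigma)$ on $\Cl^0X'$, and this kernel contains $p^\ast\bigl((\Cl^0X)[2]\bigr)$ because $(1+\sigma)p^\ast D=2\,p^\ast D$. Hence for every nonzero $[D]\in(\Cl^0X)[2]$ there is a $D'\in\Cl X'$ with $D'-\sigma D'=p^\ast D$; such a $D'$ is not $\sigma$-invariant, so $t_{D'}\in\PBuntr X$, and yet $p^\ast(t_{D'})=c_{D'-\sigma D'}=c_{p^\ast D}=p^\ast(c_D)$ with $c_D\in\PBundec X$. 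So the two images genuinely intersect whenever $\Cl^0X$ has nontrivial $2$-torsion (e.g.\ for an elliptic curve with a rational point of order $2$), and no argument can establish (c) in the stated generality.

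You should also be aware that the paper disposes of (c) in a single sentence: elements of $p^\ast(\PBuntr X)$ ``decompose into line bundles over $X'$ that are not defined over $X$''. That property is not invariant under projective equivalence: twisting $\cL_{D'}\oplus\cL_{\sigma D'}$ by $\cL_{D'}^{-1}$ gives $\cO_{X'}\oplus\cL_{\sigma D'-D'}$, whose summands are both defined over $X$ precisely when $\sigma D'-D'$ is $\sigma$-invariant, i.e.\ when $2(D'-\sigma D')=0$ --- exactly the situation produced above. So completing your step (c) is not a matter of filling a routine hole in your own write-up but of correcting the statement: what one can actually prove is that $p^\ast$ is injective on each of $\PBundec X$ and $\PBuntr X$ separately, and that the two images are disjoint if and only if $(\Cl^0X)[2]=0$; in general the failure of injectivity is confined to the pairs $\{c_D,t_{D'}\}$ with $[D]\in(\Cl^0X)[2]$ nonzero and $D'-\sigma D'=p^\ast D$.
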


\begin{proof}
 Since $p^\ast p_\ast(\cL)\simeq\cL\oplus\cL^\sigma$ for a line bundle $\cL$ over $X'$, it is clear that the image is contained in $\PBundec X'$. The images of $\PBundec X$ and $\PBuntr X$ are disjoint since elements of the image of the latter set decompose into line bundles over $X'$ that are not defined over $X$. If we denote taking the inverse elements by $\inv$, then by Proposition \ref{char_PBundec}, $p^\ast$ is injective restricted to $\PBundec X$ because $(\Cl X/\inv) \to (\Cl X'/\inv)$ is. Regarding $\PBuntr X$, observe that 
 \begin{align*} 
  p^\ast(t_D) \ &= \ p^\ast p_\ast(\cL_D) \\ 
  &\simeq \ \cL_D\oplus\cL_{\sigma D} \\ 
  &\sim \ \cL_{D-\sigma D}\oplus \cO_X \\
  &= \ c_{D-\sigma D} \;, 
 \end{align*}  
 where by Proposition \ref{char_PBuntr}, $D$ represents an element in $\bigl(\Cl X'/\Cl X\bigr)/\inv$, and by Proposition \ref{char_PBundec}, $D-\sigma D$ represents an element in $\Cl X'/\inv$. If there are $[D_1], [D_2] \in \Cl X'$ such that $(D_1-\sigma D_1)=\pm(D_2-\sigma D_2)$, then we have $D_1\mp D_2 = \sigma(D_1\mp D_2)$, and consequently $[D_1\mp D_2] \in \Cl X$.
\end{proof}

\begin{rem}
 The constant extension also restricts to a map 
 $$ p^\ast: \ \PBungi X \ \longrightarrow \ \PBungi X' \;. $$
 But this restriction is in general not injective in contrast to the previous result. For a counterexample to injectivity, see \cite[Rem.\ 2.7]{Lorscheid3}.
\end{rem}


\section{Reduction theory for rank $2$ bundles}
 \label{section_reduction_theory}

\noindent
 In this section, we introduce reduction theory for rank $2$ bundles, i.e.\ an invariant $\delta$ which is closely related to the slope of a vector bundle and reduction theory. Namely, a rank $2$ bundle $\cM$ is (semi) stable if and only if $\delta(\cM)$ is negative (non-positive). The invariant $\delta$ is also defined for projective line bundles and will be help to determine the structure of the graphs $\cG_x$.

\begin{pg}
 \label{intro_reduction}
 Vector bundles do not form a full subcategory of the category of sheaves, to wit, if $\cM_1$ and $\cM_2$ are vector bundles and $\cM_1 \to \cM_2$ is a morphism of sheaves, then the cokernel may have nontrivial torsion, which does not occur for a morphism of vector bundles. Thus by a {\it line subbundle $\cL\to\cM$} of a vector bundle $\cM$, we mean an injective morphism of sheaves such that the cokernel $\cM/\cL$ is again a vector bundle.
 
 But every locally free subsheaf $\cL\to\cM$ of rank $1$ extends to a uniquely determined line subbundle $\overline \cL\to\cM$, viz.\ $\overline \cL$ is determined by the constraint $\cL\subset\overline\cL$ (\cite[p.\ 100]{Serre}). On the other hand, every rank $2$ bundle has a line subbundle (\cite[Corollary V.2.7]{Hartshorne}).

 Two line subbundles $\cL\to\cM$ and $\cL'\to\cM$ are said to be the same if their images coincide, or, in other words, if there is an isomorphism $\cL\simeq\cL'$ that commutes with the inclusions into $\cM$. 

 For a line subbundle $\cL\to\cM$ of a rank $2$ bundle $\cM$, we define
 $$ \delta(\cL,\cM) \ := \ \deg \cL - \deg (\cM/\cL) \ = \ 2 \deg \cL - \deg \cM $$
 and
 $$ \delta(\cM) \ := \sup_{\begin{subarray}{c}\cL\to\cM\\ \textrm{line subbundle}\end{subarray}} \delta(\cL,\cM) \;. $$
 If $\delta(\cM)=\delta(\cL,\cM)$, then we call $\cL$ a {\it line subbundle of maximal degree}, or briefly, a {\it maximal subbundle}. Since $\delta(\cL\otimes\cL',\cM\otimes\cL') = \delta(\cL,\cM)$ for a line bundle $\cL'$, the invariant $\delta$ is well-defined on $\PBun_2 X$, and we put $\delta([\cM])=\delta(\cM)$.
 
 Let $g_X$ be the genus of $X$. Then the Riemann-Roch theorem and Serre duality imply:
\end{pg}

\begin{prop}[{\cite[II.2.2, Prop.\ 6 and 7]{Serre}}]
 \label{range_delta}
 For every rank $2$ bundle $\cM$,
 $$ -2g_X\leq\delta(\cM)<\infty \;. $$ 
 If $\cL\to\cM$ is a line subbundle with $\delta(\cL,\cM) > 2g_X-2$, then $\cM \simeq \cL \oplus \cM/\cL$.
\end{prop}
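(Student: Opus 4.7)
The statement has three independent parts: upper-boundedness of $\delta(\cM)$, the lower bound $-2g_X\leq\delta(\cM)$, and the splitting criterion. I would treat them in that order.

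\textbf{Upper bound.} Fix any line subbundle $\cL_0\hookrightarrow\cM$, which exists by the result recalled in paragraph \ref{intro_reduction}. For an arbitrary line subbundle $\cL\hookrightarrow\cM$, consider the composition $\cL\to\cM\to\cM/\cL_0$. If it is zero, then $\cL\subseteq\cL_0$ as subsheaves of $\cM$, hence $\deg\cL\leq\deg\cL_0$. Otherwise it is a non-zero morphism of line bundles on $X$, whence $\deg\cL\leq\deg(\cM/\cL_0)=\deg\cM-\deg\cL_0$. In either case $\deg\cL\leq\max(\deg\cL_0,\deg\cM-\deg\cL_0)$, so $\delta(\cM)<\infty$.

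\textbf{Lower bound.} I would apply Riemann-Roch in the form $\chi(\cM\otimes\cN)=2\deg\cN+\deg\cM+2(1-g_X)$ for a line bundle $\cN$ on $X$. Choosing $\cN$ of minimal degree making the right-hand side at least $1$ forces $H^0(X,\cM\otimes\cN)\neq0$; any non-zero section corresponds to a non-zero morphism $\cN^{-1}\to\cM$, and passing to the saturation yields a honest line subbundle $\cL\hookrightarrow\cM$ with $\deg\cL\geq-\deg\cN$. A short case analysis on the parity of $\deg\cM$ then gives $\delta(\cL,\cM)=2\deg\cL-\deg\cM\geq-2g_X$ (in fact $\geq 1-2g_X$ when $\deg\cM$ is odd).

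\textbf{Splitting.} The short exact sequence $\ses{\cL}{\cM}{\cM/\cL}$ is classified by an element of $\Ext^1(\cM/\cL,\cL)\simeq H^1(X,\cL\otimes(\cM/\cL)^{-1})$. By Serre duality this group is dual to $H^0(X,\cL^{-1}\otimes(\cM/\cL)\otimes\omega_X)$, and the line bundle inside has degree $-\delta(\cL,\cM)+2g_X-2$, which is strictly negative by hypothesis. Hence the $H^0$ vanishes, so $\Ext^1(\cM/\cL,\cL)=0$ and the extension splits, giving $\cM\simeq\cL\oplus\cM/\cL$.

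The subtlest step is the lower bound: the section produced by Riemann-Roch only gives a rank-one \emph{subsheaf}, not automatically a line subbundle. One has to invoke the saturation construction recalled in paragraph \ref{intro_reduction} to promote it to a genuine line subbundle, noting that saturation can only increase the degree, so the estimate on $\delta(\cL,\cM)$ is preserved.
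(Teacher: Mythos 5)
Your proof is correct and follows exactly the route the paper indicates (and that Serre uses in the cited reference \cite[II.2.2]{Serre}): Riemann--Roch to produce a rank-one subsheaf of controlled degree, promoted to a line subbundle by saturation, for the lower bound, and vanishing of $\Ext^1(\cM/\cL,\cL)$ via Serre duality for the splitting criterion, with the elementary two-case comparison against a fixed line subbundle giving $\delta(\cM)<\infty$. The paper offers no proof beyond the citation and the remark that Riemann--Roch and Serre duality imply the statement, so your argument is precisely the intended one.
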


\begin{pg}
 \label{PExt_to_Bun}
 Every extension of a line bundle $\cL'$ by a line bundle $\cL$, i.e.\ every exact sequence of the form
 $$ \ses{\cL}{\cM}{\cL'} \;, $$
 determines a rank $2$ bundle $\cM\in\Bun_2 X$. This defines for all $\cL,\cL'\in\Pic X$ a map
 $$ \Ext^1(\cL,\cL') \ \longrightarrow \ \Bun_2 X \;, $$
 which maps the zero element to $\cL \oplus \cL'$. Remark that since decomposable bundles may have line subbundles that differ from its given two factors, nontrivial elements can give rise to decomposable bundles.

 The units $\FF_q^\times$ operate by multiplication on the $\FF_q$-vector space
 $$ \Ext^1(\cL,\cL') \ \underset{\begin{subarray}{c}{\rm Serre}\\{\rm duality}\end{subarray}}{\simeq} \ \Hom(\cL,\cL'\omega_X^\vee) $$
 where $\omega_X$ is the canonical sheaf of $X$. The multiplication of a morphism $\cL\to\cL'\omega_X^\vee$ by an $a\in\FF_q^\times$ is nothing else but multiplying the stalk $(\cL)_\eta$ by $a^{-1}$ and all stalks $(\cL'\omega_X^\vee)_x$ at closed points $x$ by $a$, which induces automorphisms on both $\cL$ and $\cL'\omega_X^\vee$, respectively. Thus, two elements of $\Ext^1(\cL,\cL')$ that are $\FF_q^\times$-multiples of each other define the same bundle $\cM\in\Bun_2 X$. We get a well-defined map 
 $$ \PP\Ext^1(\cL,\cL') \ \longrightarrow \ \Bun_2 X $$
 where the projective space $\PP\Ext^1(\cL,\cL')$ is defined as the empty set when $\Ext^1(\cL,\cL')$ is trivial. 
 If we further project to $\PBun_2 X$, we can reformulate the above properties of the invariant $\delta$ as follows.
\end{pg}

\begin{prop}
 The map
 $$ \coprod_{-2g_X\leq\deg\cL\leq2g_X-2} \PP\Ext^1(\cL,\cO_X) \ \longrightarrow \ \PBun_2 X $$
 meets every element of $\PBunindec X$, and the fibre of any $[\cM]\in\PBun_2 X$ is of the form
 $$ \left\{ \ 0\to\cL\to\cM\to\cO_X\to0 \ \left| \ \begin{subarray}{c}
    \delta(\cL,\cM)\geq-2g_X\vspace{0,1cm}\\ \text{and }\cM\smallnsimeq\cL\oplus\cO_X\end{subarray} \ \right.\right\} \;. $$
 
\end{prop}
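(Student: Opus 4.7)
My plan has two parts: show the image contains $\PBunindec X$, then describe the fibres.

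\emph{Image.} Given an indecomposable representative $\cM$ of $[\cM]\in\PBunindec X$, Proposition \ref{range_delta} bounds $\delta(\cM)$ above, so $\delta(\blanc,\cM)$ attains its supremum on some line subbundle $\cN\hookrightarrow\cM$. Let $\cQ=\cM/\cN$; tensoring $0\to\cN\to\cM\to\cQ\to 0$ by $\cQ^{-1}$ gives
$$ 0\to\cL\to\cM'\to\cO_X\to 0 $$
with $\cL=\cN\otimes\cQ^{-1}$ and $\cM'=\cM\otimes\cQ^{-1}\sim\cM$. Since $\det\cM'=\cL$, we have $\deg\cL=\delta(\cL,\cM')=\delta(\cM')\in[-2g_X,2g_X-2]$: the lower bound is Proposition \ref{range_delta}, while $\deg\cL>2g_X-2$ would force $\cM'\simeq\cL\oplus\cO_X$ by the same proposition, contradicting indecomposability. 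Indecomposability also prevents splitting of the sequence, so its class in $\Ext^1(\cL,\cO_X)$ is non-zero and defines a preimage in $\PP\Ext^1(\cL,\cO_X)$ of $[\cM]$.

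\emph{Fibres.} A preimage of $[\cM]\in\PBun_2X$ consists of a line bundle $\cL$ in the given degree range and a non-zero class $[\epsilon]\in\PP\Ext^1(\cL,\cO_X)$ whose middle term $\cM_\epsilon$ is projectively equivalent to $\cM$. Since the cokernel of the extension is $\cO_X$, necessarily $\det\cM_\epsilon=\cL$, and within $[\cM]$ such a representative is determined up to twisting by $2$-torsion of $\Pic X$. After this choice, each preimage corresponds to a short exact sequence of the displayed form with the given middle term, and the $\FF_q^\times$-quotient in $\PP\Ext^1(\cL,\cO_X)$ matches the notion of isomorphism of exact sequences with fixed middle term (using $\End\cL=\End\cO_X=\FF_q$, so the only extra freedom available is simultaneous scaling of the inclusion and the projection). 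From $\det\cM=\cL$ we compute $\delta(\cL,\cM)=2\deg\cL-\deg\cM=\deg\cL$, so the condition $-2g_X\leq\deg\cL$ from the indexing set becomes $\delta(\cL,\cM)\geq -2g_X$, which is automatic by Proposition \ref{range_delta}; the condition $\deg\cL\leq 2g_X-2$ is exactly $\cM\smallnsimeq\cL\oplus\cO_X$, via the contrapositive of the second part of Proposition \ref{range_delta}.

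The main technical obstacle is the bookkeeping in the fibre description: one must justify that across the disjoint union over $\cL$, preimages of a single class $[\cM]$ correspond cleanly to short exact sequences with one chosen middle-term representative, and that the $\FF_q^\times$-scaling on extension classes is the correct equivalence relation. This works transparently when $[\cM]\in\PBunindec X$, where the representative with prescribed determinant is sharply pinned down; the conditions $\delta(\cL,\cM)\geq -2g_X$ and $\cM\smallnsimeq\cL\oplus\cO_X$ are then precisely the translation, through $\delta(\cL,\cM)=\deg\cL$, of the numerical bounds defining the indexing set together with non-splitness of the extension.
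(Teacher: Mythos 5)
Your proof is correct and follows essentially the same route as the paper: pick a maximal line subbundle (so $\delta(\cL,\cM)\geq-2g_X$ by Proposition \ref{range_delta}), twist so the quotient is $\cO_X$ to get $\delta(\cL,\cM)=\deg\cL$, and use the second half of Proposition \ref{range_delta} to see that $\deg\cL>2g_X-2$ forces $\cM\simeq\cL\oplus\cO_X$, which both bounds the indexing set and identifies the fibre conditions. Your extra bookkeeping on the fibres (the $\FF_q^\times$-scaling and the $2$-torsion ambiguity in choosing a representative with determinant $\cL$) is more explicit than the paper's one-line treatment but does not change the argument.
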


\begin{proof}
 We know that every $[\cM]\in\PBun_2 X$ has a reduction
 $$ \ses{\cL}{\cM}{\cL'} $$
 with $\delta(\cL,\cM)\geq -2g_X$, where we may assume that $\cL'=\cO_X$ by replacing $\cM$ with $\cM\otimes(\cL')^{-1}$, hence
 $\delta(\cL,\cM)=\deg\cL$.
 If $\deg\cL>2g_X-2$, then $\cM$ decomposes, so $\Ext^1(\cL,\cO_X)$ is trivial and $\PP\Ext^1(\cL,\cO_X)$ is
 the empty set. This explains the form of the fibres and that $\PBunindec X$ is contained in the image. 
\end{proof}

\begin{cor}
 \label{finite_indec_part}
 There are only finitely many isomorphism classes of indecomposable projective line bundles.
\end{cor}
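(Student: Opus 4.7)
The plan is to deduce the corollary directly from the preceding proposition by checking that the indexing coproduct is actually a finite set, and that each summand is finite.

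First I would observe that the proposition exhibits a surjection from
$$ \coprod_{-2g_X\leq\deg\cL\leq2g_X-2} \PP\Ext^1(\cL,\cO_X) \ \longrightarrow \ \PBunindec X, $$
so it suffices to show that the source is a finite set. The degree $\deg \cL$ ranges over the finite interval of integers $[-2g_X, 2g_X-2]$, so the coproduct is taken over only finitely many values of $\deg \cL$.

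Next I would argue that for each fixed degree $d$ there are only finitely many line bundles $\cL$ with $\deg \cL = d$. Indeed, $\Pic^d X$ is a torsor under $\Pic^0 X \simeq \Cl^0 F$, which is finite since $X$ is a smooth projective curve over the finite field $\FF_q$ (this is the classical finiteness of the degree-zero divisor class group of a global function field). Thus the index set of the coproduct is finite.

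Finally, for each such $\cL$, Serre duality gives
$$ \Ext^1(\cL,\cO_X) \ \simeq \ \Hom(\cL,\omega_X)^\vee \ \simeq \ H^0(X,\cL^{-1}\otimes\omega_X)^\vee, $$
which is a finite-dimensional $\FF_q$-vector space because global sections of a coherent sheaf on a projective scheme over $\FF_q$ form a finite-dimensional $\FF_q$-vector space. Hence $\PP\Ext^1(\cL,\cO_X)$ is a finite set (possibly empty). A finite union of finite sets is finite, so $\PBunindec X$ is finite, and in particular its subset of isomorphism classes of indecomposable projective line bundles is finite. There is no substantial obstacle here; the only point to be careful about is that the proposition only guarantees surjectivity onto $\PBunindec X$, not a bijection, but surjectivity is all that is needed for the cardinality bound.
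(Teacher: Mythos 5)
Your proof is correct and takes essentially the same route as the paper, which simply observes that the coproduct $\coprod_{-2g_X\leq\deg\cL\leq2g_X-2}\PP\Ext^1(\cL,\cO_X)$ is a finite union of finite sets; you merely fill in the details the paper leaves implicit (finiteness of $\Pic^0 X$ over $\FF_q$ and finite-dimensionality of $\Ext^1(\cL,\cO_X)$ as an $\FF_q$-vector space). Your closing remark that surjectivity onto $\PBunindec X$ suffices is exactly the right observation.
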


\begin{proof}
 This is clear since 
 $ \coprod\limits_{-2g_X\leq\deg\cL\leq2g_X-2} \hspace{-0,7cm} \PP\Ext^1(\cL,\cO_X) \ \ \ $
 is a finite union of finite sets.
\end{proof}

\begin{lemma}
 \label{lemma_schleich}
 If $\cL\to\cM$ is a maximal subbundle, then for every line subbundle $\cL'\to\cM$ that is different from $\cL\to\cM$, 
 $$ \delta(\cL',\cM) \leq -\,\delta(\cL,\cM) \;. $$
 Equality holds if and only if $\cM\simeq\cL\oplus\cL'$, i.e.\ $\cM$ decomposes and $\cL'$ is a complement of $\cL$ in $\cM$. 
\end{lemma}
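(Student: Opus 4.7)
The plan is to exploit the short exact sequence $0 \to \cL \to \cM \to \cM/\cL \to 0$ and to study the composition of $\cL' \to \cM$ with the projection $\cM \to \cM/\cL$. First I would set $\cL'' = \cM/\cL$, which is a line bundle since $\cL$ is a line subbundle, of degree $\deg \cL'' = \deg \cM - \deg \cL$.

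The key step is to show that the composition $\phi \colon \cL' \to \cM \to \cL''$ is nonzero. Because $\cL'$ and $\cL$ are both line subbundles with distinct images in $\cM$, their generic fibres are distinct $F$-lines in $\cM_\eta$, so their intersection as subsheaves of $\cM$ has trivial generic fibre and hence is zero (both being torsion-free). Therefore $\cL' \cap \cL = 0$ inside $\cM$, meaning $\phi$ has trivial kernel and is a nonzero map of line bundles. Any nonzero map of line bundles on $X$ is injective with torsion cokernel, so $\deg \cL' \leq \deg \cL''$, with equality if and only if $\phi$ is an isomorphism.

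Plugging this into the definition of $\delta$ yields
\[
\delta(\cL',\cM) \ = \ 2\deg \cL' - \deg \cM \ \leq \ 2\deg \cL'' - \deg \cM \ = \ \deg \cM - 2\deg \cL \ = \ -\,\delta(\cL,\cM),
\]
which is the desired inequality. (Note that maximality of $\cL$ is not actually needed for the inequality itself; it only fixes $\cL$ as the reference against which $\cL'$ is measured.)

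For the equality case, $\delta(\cL',\cM) = -\delta(\cL,\cM)$ is equivalent to $\phi$ being an isomorphism. If $\phi$ is an isomorphism, then $\phi^{-1}\colon \cL'' \to \cL'$ composed with $\cL' \hookrightarrow \cM$ gives a section of the projection $\cM \to \cL''$, so the sequence $0 \to \cL \to \cM \to \cL'' \to 0$ splits and $\cM \simeq \cL \oplus \cL'$. Conversely, if $\cM \simeq \cL \oplus \cL'$ with $\cL'$ a complementary factor, then the projection $\cM \to \cM/\cL$ restricts to an isomorphism on $\cL'$, so $\deg \cL' = \deg \cL''$ and equality holds. I do not foresee a serious obstacle; the only subtle point is the verification that $\cL' \neq \cL$ as line subbundles actually forces $\cL' \cap \cL = 0$, which is a consequence of the saturation property of line subbundles recalled in \ref{intro_reduction}.
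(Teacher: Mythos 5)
Your proposal is correct and follows essentially the same route as the paper: both arguments show that the composite $\cL'\to\cM\to\cM/\cL$ is injective (you justify this via generic fibres and saturation, the paper via the absence of a factorisation $\cL'\to\cL$), deduce $\deg\cL'\leq\deg\cM-\deg\cL$, and characterise equality by the splitting induced by the inverse of that isomorphism. Your side remark that maximality of $\cL$ is not needed for the inequality is also accurate.
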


\begin{proof}
 Compare with \cite[Lemma 3.1.1.]{Schleich}. Since $\cL'\to\cM$ is different from $\cL\to\cM$, there is no inclusion $\cL'\to\cL$ that commutes with the inclusions into $\cM$. Hence the composed morphism $\cL'\to\cM\to\rquot{\cM}{\cL}$ must be injective, and $\deg\cL'\leq\deg\rquot{\cM}{\cL}=\deg\cM-\deg\cL$. This implies that $\delta(\cL',\cM)=2\deg\cL'-\deg\cM\leq\deg\cM-2\deg\cL=-\delta(\cL,\cM)$. Equality holds if and only if $\cL'\to\rquot{\cM}{\cL'}$ is an isomorphism, and in this case, its inverse defines a section $\rquot{\cM}{\cL}\simeq\cL'\to\cM$.
\end{proof}

\begin{prop}\ 
 \label{number_maximal_subbundles}
 \begin{enumerate}
  \item\label{max1} A rank $2$ bundle $\cM$ has at most one line subbundle $\cL\to\cM$ such that $\delta(\cL,\cM)\geq 1$.
  \item\label{max2} If $\cL\to\cM$ is a line subbundle with $\delta(\cL,\cM)\geq0$, then $\delta(\cM)=\delta(\cL,\cM)$.
  \item\label{max3} If $\delta(\cM) = 0$, we distinguish three cases.
        \begin{enumerate}
         \item\label{max31} $\cM$ has only one maximal line bundle: this happens if and only if $\cM$ is indecomposable or if $\cM\simeq\cL_1\oplus\cL_2$ and $\deg\cL_1\neq\deg\cL_2$.
         \item\label{max32} $\cM$ has exactly two maximal subbundles $\cL_1\to\cM$ and $\cL_2\to\cM$: this happens if and only if $\cM\simeq\cL_1\oplus\cL_2$ and $\deg\cL_1=\deg\cL_2$, but $\cL_1\nsimeq\cL_2$.
         \item\label{max33} $\cM$ has exactly $q+1$ maximal subbundles: this happens if and only if all maximal subbundles are of the same isomorphism type $\cL$ and $\cM\simeq\cL\oplus\cL$.
        \end{enumerate}
  \item\label{max4} $\delta(c_D)=\norm{\deg D}$.
  \item\label{max5} $\delta(\cM)$ is invariant under extension of constants for $[\cM]\in\PBundec X$.
  \end{enumerate}
\end{prop}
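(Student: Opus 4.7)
The plan is to derive everything from Lemma \ref{lemma_schleich}, which controls non-maximal line subbundles. Pick a maximal subbundle $\cL_0\to\cM$. For part \eqref{max1}, if any $\cL\neq\cL_0$ had $\delta(\cL,\cM)\geq 1$, the lemma would give $\delta(\cL,\cM)\leq-\delta(\cL_0,\cM)\leq-1$, a contradiction; so only $\cL_0$ can realise $\delta\geq 1$. For part \eqref{max2}, let $\cL$ satisfy $\delta(\cL,\cM)\geq 0$: either $\cL=\cL_0$ and we are done, or the lemma forces $0\leq\delta(\cL,\cM)\leq-\delta(\cL_0,\cM)\leq0$, making $\cL$ itself a maximal subbundle with $\delta(\cM)=0=\delta(\cL,\cM)$.

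For part \eqref{max3}, assume $\delta(\cM)=0$. The equality case of Lemma \ref{lemma_schleich} asserts that the existence of two distinct maximal subbundles $\cL_1,\cL_2$ forces a splitting $\cM\simeq\cL_1\oplus\cL_2$, in which case $\deg\cL_1=\deg\cL_2$. Hence if $\cM$ is indecomposable, only one maximal subbundle exists. For a decomposable $\cM\simeq\cL_1\oplus\cL_2$ with equal-degree factors, each maximal subbundle $\cL'\to\cM$ projects to at least one factor injectively (since the two projections cannot both be zero), so $\cL'$ is isomorphic to $\cL_1$ (equivalently $\cL_2$). When $\cL_1\nsimeq\cL_2$, any nonzero map between line bundles of equal non-isomorphic class vanishes, so $\Hom(\cL_1,\cL_2)=0=\Hom(\cL_2,\cL_1)$, and one projection of $\cL'$ must vanish; this forces $\cL'$ to equal either $\cL_1$ or $\cL_2$ as subbundles. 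When $\cL_1\simeq\cL_2=:\cL$, line subbundles isomorphic to $\cL$ are parameterised by $\PP\bigl(\Hom(\cL,\cL\oplus\cL)\bigr)=\PP^1(\FF_q)$ (nonzero pairs $(a,b)\in\FF_q^2$ modulo scalars from $\Aut\cL=\FF_q^\times$), yielding exactly $q+1$ maximal subbundles. The remaining alternative in \eqref{max31}, decomposable with unequal degrees, is vacuous under $\delta(\cM)=0$.

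For part \eqref{max4}, in $c_D=[\cL_D\oplus\cO_X]$ either $\cL_D$ (if $\deg D\geq 0$) or $\cO_X$ (if $\deg D<0$) is a line subbundle with $\delta=|\deg D|\geq 0$; part \eqref{max2} then yields $\delta(c_D)=|\deg D|$. Part \eqref{max5} follows by representing $[\cM]\in\PBundec X$ as $c_D$ and using that constant extension preserves the degree of divisors: $\deg_{X'}(p^*D)=\deg_X(D)$ because the closed points of $X'$ above a given $x\in\norm X$ have residue degrees summing to $\deg x$. Applying \eqref{max4} on both sides gives $\delta(p^*\cM)=|\deg_{X'}p^*D|=|\deg_X D|=\delta(\cM)$. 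I expect the most delicate step to be the counting argument in case \eqref{max33}, both parametrising all $q+1$ maximal subbundles of $\cL\oplus\cL$ by $\PP^1(\FF_q)$ and excluding subbundles of type other than $\cL$.
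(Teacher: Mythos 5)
Your proof is correct, and for parts \eqref{max1}, \eqref{max2}, \eqref{max4}, \eqref{max5} and the splitting criterion in \eqref{max3} it follows the paper's route exactly: everything is squeezed out of Lemma \ref{lemma_schleich} and its equality case, which is precisely what the paper means by ``everything follows from the preceding lemmas''. The one place where you genuinely diverge is the count of $q+1$ maximal subbundles of $\cL\oplus\cL$ in case \eqref{max33}, which is also the only step the paper does not dismiss as immediate. The paper reduces to $\cO_X\oplus\cO_X$, identifies line subbundles with lines in the generic stalk $F^2$, and computes the local degrees $\deg_x\cF$ to see that exactly the lines spanned by vectors in $\FF_q^2\subset F^2$ give subbundles with $\delta=0$. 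You instead parametrise the maximal subbundles by $\PP\bigl(\Hom(\cL,\cL\oplus\cL)\bigr)\simeq\PP^1(\FF_q)$, after using the projection argument to see that every maximal subbundle is isomorphic to $\cL$ and hence is the saturated image of a global homomorphism. The two arguments rest on the same arithmetic input, namely $H^0(X,\cO_X)=\FF_q$ (the paper uses it when singling out lines generated by elements of $\FF_q^2$; you use it as $\End(\cL)=\FF_q$), but yours is more structural and makes explicit where $q+1=\#\PP^1(\FF_q)$ comes from. Two small points of hygiene: in the equal-degree decomposable case the phrase ``$\cL'$ is isomorphic to $\cL_1$ (equivalently $\cL_2$)'' should read ``to $\cL_1$ or to $\cL_2$'', since the two options coincide only in case \eqref{max33}; and the fact that a nonzero map of line bundles of equal degree is an isomorphism (injective with torsion cokernel of degree $0$) deserves a word, as it powers both the projection argument and your claim that $\Hom(\cL_1,\cL_2)=0$ for non-isomorphic factors of equal degree.
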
 

\begin{proof}
 Everything follows from the preceding lemmas, except for the fact that $\cL\oplus\cL$ has precisely $q+1$ maximal subbundles in part \eqref{max33}, which needs some explanation. 
 
 First observe that by tensoring with $\cL^{-1}$, we reduce the question to searching the maximal subbundles of $\cO_X\oplus\cO_X$. This bundle has a canonical base at every stalk and the canonical inclusions $\cO_{X,x}^2\hookrightarrow\cO_{X,\eta}^2$ of the stalks at closed points $x$ into the stalk at the generic point $\eta$. This allows us to choose for any line subbundle $\cF\to\cO_X\oplus\cO_X$ a trivialisation with trivial coordinate changes. Thus for every open subset over which $\cF$ trivialises, we obtain the same $1$-dimensional $F$-subspace $\cF_\eta\subset\cO_{X,\eta}^2=F^2$. On the other hand, every $1$-dimensional subspace $\cF_\eta\subset\cO_{X,\eta}^2$ gives back the line subbundle by the inclusion of stalks $\cF_x=\cF_\eta\cap\cO_{X,x}^2\hookrightarrow\cF_\eta$. We see that for every place $x$, $\deg_x \cF\geq 0$, and only the lines in $\cO_{X,\eta}^2=F^2$ that are generated by an element in $\FF_q^2\subset F^2$ define line subbundles $\cF\to\cO_X\oplus\cO_X$ with $\deg_x\cF=0$ for every place $x$. But there are $q+1=\#\PP^1(\FF_q)$ different such line subbundles.
\end{proof}

\begin{prop}
 \label{delta_traces}
 Let $p:X'=X\otimes\FF_{q^2}\to X$ and $\cL\in\Pic X'$, then $\delta(p_\ast\cL)$ is an even non-positive integer. It equals $0$ if and only if $\cL\in p^\ast\Pic X$.
\end{prop}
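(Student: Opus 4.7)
\medskip\noindent
\textbf{Proof proposal.} The strategy is to pull a line subbundle of $p_\ast\cL$ back to $X'$ and analyse it inside $p^\ast p_\ast\cL\simeq\cL\oplus\cL^\sigma$ using the structure results for maximal subbundles from paragraph \ref{facts_on_vb} and Proposition \ref{number_maximal_subbundles}.

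First I would settle the degree bookkeeping. Since $\sigma$ preserves degrees, $\deg\cL=\deg\cL^\sigma$, and the isomorphism $p^\ast p_\ast\cL\simeq\cL\oplus\cL^\sigma$ together with the fact that $p^\ast$ preserves degrees on divisors (checked directly by splitting cases according to the parity of $\deg x$ for $x\in\norm X$) forces $\deg p_\ast\cL=2\deg\cL$. Now let $\cL'\to p_\ast\cL$ be any line subbundle on $X$. Flatness of $p$ makes $p^\ast\cL'\to\cL\oplus\cL^\sigma$ a line subbundle on $X'$, and by Lemma \ref{lemma_schleich} (or directly from Proposition \ref{number_maximal_subbundles}\eqref{max3}) any line subbundle of $\cL\oplus\cL^\sigma$ has degree at most $\deg\cL$. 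Since $\deg p^\ast\cL'=\deg\cL'$, this yields
$$ \delta(\cL',p_\ast\cL)\ =\ 2\deg\cL'-\deg p_\ast\cL\ =\ 2(\deg\cL'-\deg\cL)\ \leq\ 0, $$
which is an even integer. Taking the supremum over $\cL'$ shows $\delta(p_\ast\cL)$ is an even non-positive integer.

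For the equivalence, the easy direction is immediate: if $\cL=p^\ast\cM$ for some $\cM\in\Pic X$, then $p_\ast\cL\simeq p_\ast p^\ast\cM\simeq\cM\oplus\cM$, which obviously admits $\cM$ as a line subbundle realising $\delta=0$. Conversely, assume $\delta(p_\ast\cL)=0$, so that some line subbundle $\cL'\to p_\ast\cL$ satisfies $\deg\cL'=\deg\cL$. Then $p^\ast\cL'\hookrightarrow\cL\oplus\cL^\sigma$ is a \emph{maximal} line subbundle, and by Proposition \ref{number_maximal_subbundles}\eqref{max32}\eqref{max33} it must be isomorphic to $\cL$ or to $\cL^\sigma$. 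In either case $\cL\simeq p^\ast\cL'$ or $\cL^\sigma\simeq p^\ast\cL'$; since any bundle pulled back from $X$ is $\sigma$-invariant, both possibilities give $\cL\simeq\cL^\sigma$ and identify $\cL$ (or $\cL^\sigma$, which is the same class) with $p^\ast\cL'\in p^\ast\Pic X$.

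The only real subtlety I foresee is justifying that $p^\ast\cL'$ remains a \emph{line subbundle} (not merely a rank~$1$ subsheaf) of $\cL\oplus\cL^\sigma$, which follows from flatness of $p$ since the quotient $p_\ast\cL/\cL'$ is a line bundle and hence so is its pullback. Once this is in place, the proof is a direct application of the structural description of maximal subbundles of a decomposable rank~$2$ bundle, and no appeal to Galois descent or cohomological machinery is needed: the maximal subbundle dichotomy from Proposition \ref{number_maximal_subbundles} already forces $\cL$ to be the pullback of a line bundle on $X$.
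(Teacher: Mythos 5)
Your proposal is correct and follows essentially the same route as the paper: pull back to $X'$, use $p^\ast p_\ast\cL\simeq\cL\oplus\cL^\sigma$ with $\deg\cL=\deg\cL^\sigma$ to bound the degree of any line subbundle of $p_\ast\cL$ by $\deg\cL$, and observe that equality forces the pulled-back subbundle to be a maximal subbundle isomorphic to $\cL$ or $\cL^\sigma$, hence $\cL\in p^\ast\Pic X$. The only cosmetic difference is the parity argument: the paper invokes $\delta(\cM)\equiv\deg\cM\pmod 2$ together with $\deg p_\ast\cL=2\deg\cL$, whereas you compute $\delta(\cL',p_\ast\cL)=2(\deg\cL'-\deg\cL)$ directly; these are equivalent.
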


\begin{proof}
 Over $X'$, we have $p^\ast p_\ast\cL \simeq \cL\oplus\cL^\sigma$, and $\deg\cL = \deg\cL^\sigma$, thus by the previous paragraph, a maximal subbundle of $p_\ast\cL$  has at most the same degree as $\cL$, or, equivalently, $\delta(p_\ast\cL)\leq0$. A maximal subbundle has the same degree as $\cL$ if and only if it is isomorphic to $\cL$ or $\cL^\sigma$ which can only be the case when $\cL$ already is defined over $X$. Finally, by the very definition of $\delta(\cM)$ for rank $2$ bundles $\cM$, it follows that
 $$ \delta(\cM) \ \equiv \ \deg \cM \ \pmod 2 \;, $$
 and $\deg(p_\ast\cL)=2\deg\cL$ is even.
\end{proof}

\begin{rem}
 We see that for $[\cM]\in\PBuntr X$, the invariant $\delta(\cM)$ must get larger if we extend constants to $\FF_{q^2}$, because $p^\ast(\cM)$ decomposes over $X'$. This stays in contrast to the result for classes in $\PBundec X$ (Proposition  \ref{number_maximal_subbundles} \eqref{max5}).
\end{rem}


\section{Nucleus and cusps}
\label{section_nucleus_and_cusps}

\noindent
 In this section, we will define certain subgraphs of $\cG_x$ for a place $x$, namely, the cusp of a divisor class modulo $x$, which is an infinite subgraph of a simple nature, and the nucleus, which is a finite subgraph that depends heavily on the arithmetic of $F$. Finally, $\cG_x$ can be described as the union of the nucleus with a finite number of cusps.
 
\begin{pg}
 \label{def_I_x}
 We use reduction theory to investigate sequences of the form
 $$ \ses{\cM'}{\cM}{\cK_x} \;, $$
 which occur in the definition of $\cU_x([\cM])$. By additivity of the degree map (paragraph \ref{det_and_deg}), $\deg\cM' = \deg\cM-d_x$ where $d_x$ is the degree of $x$.

 If $\cL\to\cM$ is a line subbundle, then we say that it lifts to $\cM'$ if there exists a morphism $\cL\to\cM'$ such that the diagram
 $$  \xymatrix{&{\cL}\ar[dl]\ar{[d]}\\ {\cM'}\ar[r]&{\cM}} $$
 commutes. In this case, $\cL\to\cM'$ is indeed a subbundle since otherwise it would extend non-trivially to a subbundle $\overline\cL\to\cM'\subset\cM$ and would contradict the hypothesis that $\cL$ is a subbundle of $\cM$. By exactness of the above sequence, a line subbundle $\cL\to\cM$ lifts to $\cM'$ if and only if the image of $\cL$ in $\cK_x$ is $0$. 

 Let $\cI_x\subset\cO_X$ be the kernel of $\cO_X\to\cK_x$. This is also a line bundle, since $\cK_x$ is a torsion sheaf. For every line bundle $\cL$, we may think of $\cL\cI_x$ as a subsheaf of $\cL$. In $\Pic X$, the line bundle $\cI_x$ represents the inverse of $\cL_x$, the line bundle associated to the divisor $x$. In particular, $\deg \cI_x=\deg \cL_x^{-1}=-d_x$.

 If $\cL\to\cM$ does not lift to a subbundle of $\cM'$, we have that $\cL\cI_x\subset\cL\to\cM$ lifts to a subbundle of $\cM'$:
 $$ \xymatrix{{\cI_x\cL}\ar{[d]}\ar@{}|\subset[r]&{\cL}\ar{[d]}\\ {\cM'}\ar[r]&{\cM}\;.} $$

 Note that every subbundle $\cL\to\cM'$ is a locally free subsheaf $\cL\to\cM$, which extends to a subbundle $\overline\cL\to\cM$. If thus $\cL\to\cM$ is a maximal subbundle that lifts to a subbundle $\cL\to\cM'$, then $\cL\to\cM'$ is a maximal subbundle. If, however, $\cL\to\cM$ is a maximal subbundle that does not lift to a subbundle $\cL\to\cM'$, then $\cL\cI_x\to\cM'$ is a subbundle, which is not necessarily maximal. These considerations imply that
$$ \begin{array}{cccccccc} 
  \delta(\cM')  & \!\! \leq \! \!  & 2\deg\cL - \deg\cM'  & \! \!  = \! \!  & 2\deg\cL - (\deg\cM - d_x)  & \! \!  = \! \!  & \delta(\cM) + d_x &\textrm{and} \\
  \delta(\cM')& \! \!  \geq \! \!  & 2\deg\cI_x\cL - \deg\cM' \hspace{0pt} & \! \! = \! \!  & \ 2\deg\cL - 2d_x - (\deg\cM - d_x) \hspace{0pt} & \! \!  = \! \!  & \ \delta(\cM) - d_x \;. 
 \end{array}					 $$
 Since $\delta(\cM') \equiv \deg\cM' = \deg\cM - d_x \pmod 2$, we derive:
\end{pg}

\begin{lemma} 
 \label{degree_of_nb}
 If $\quad 0\to\cM'\to\cM\to\cK_x\to 0\quad$ is exact, then
 $$ \delta(\cM') \ \ \ \in \ \ \ \bigl\{ \ \delta(\cM)-d_x,\ \ \  \delta(\cM)-d_x+2,\ldots,\ \delta(\cM)+d_x \ \bigr\} \;. \mbox{\qed} $$
\end{lemma}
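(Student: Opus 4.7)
The bulk of this proof has already been carried out in paragraph \ref{def_I_x}; the lemma is essentially a clean packaging of those estimates together with a parity observation. My plan is to establish the upper bound $\delta(\cM') \leq \delta(\cM) + d_x$ via a saturation argument, the lower bound $\delta(\cM') \geq \delta(\cM) - d_x$ by dichotomising on whether a maximal subbundle of $\cM$ lifts to $\cM'$, and then invoke parity to eliminate the odd-indexed values between the two extremes.

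For the upper bound, I would take an arbitrary line subbundle $\cL' \to \cM'$, compose with the inclusion $\cM' \hookrightarrow \cM$ to produce a locally free rank-one subsheaf of $\cM$, and pass to its saturation $\overline{\cL'} \to \cM$, which is a line subbundle satisfying $\deg \overline{\cL'} \geq \deg \cL'$. Maximality of $\delta(\cM)$ gives $2\deg \overline{\cL'} - \deg \cM \leq \delta(\cM)$, and combining this with $\deg \cM' = \deg \cM - d_x$ yields $\delta(\cL', \cM') = 2\deg \cL' - \deg \cM' \leq \delta(\cM) + d_x$. Taking the supremum over $\cL'$ delivers the bound. For the lower bound, I would fix a maximal subbundle $\cL \to \cM$: if $\cL$ lifts to $\cM'$ it exhibits a subbundle of degree $\deg \cL$ (giving $\delta(\cM') \geq \delta(\cM) + d_x$ in that case), and if it does not, then by paragraph \ref{def_I_x} the twist $\cL \cI_x \to \cM'$ is a subbundle of degree $\deg \cL - d_x$ (giving $\delta(\cM') \geq \delta(\cM) - d_x$). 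Either way, $\delta(\cM') \geq \delta(\cM) - d_x$.

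The parity step uses that $\delta(\cN) \equiv \deg \cN \pmod{2}$ for every rank $2$ bundle $\cN$, so $\delta(\cM') \equiv \deg \cM - d_x \equiv \delta(\cM) + d_x \pmod{2}$, which together with the two bounds forces $\delta(\cM')$ into the stated arithmetic progression of step two. The only point I would verify carefully is the inequality $\deg \overline{\cL'} \geq \deg \cL'$ used in the upper-bound argument; this holds because $\overline{\cL'}/\cL'$ is a torsion sheaf and torsion sheaves on $X$ have non-negative degree. I expect this to be the closest thing to an obstacle, since everything else is bookkeeping on the estimates already assembled in paragraph \ref{def_I_x}.
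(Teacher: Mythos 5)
Your proposal is correct and follows essentially the same route as the paper: the upper bound via saturation of a line subbundle of $\cM'$ inside $\cM$, the lower bound via the dichotomy of whether a maximal subbundle $\cL\to\cM$ lifts to $\cM'$ or must be twisted by $\cI_x$, and the parity constraint $\delta(\cM')\equiv\deg\cM'\pmod 2$ to cut down to the arithmetic progression of step two — all of which is exactly the content the paper assembles in paragraph \ref{def_I_x} before stating the lemma. The point you flag for careful checking, $\deg\overline{\cL'}\geq\deg\cL'$, is indeed fine for the reason you give (the quotient is torsion and torsion sheaves have non-negative degree).
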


\begin{pg}
 \label{def_assoc_sequence}
 Every line subbundle $\cL\to\cM$ defines a line $\cL/\cL\cI_x$ in 
 $\PP^1\bigl(\cM/(\cM\otimes\cI_x)\bigr)$. By the bijection
 $$ \begin{array}{ccc}
     \left\{\begin{subarray}{c}\text{isomorphism classes of exact}\vspace{0,1cm}\\
                        0\to\cM'\to\cM\to\cK_x\to0 \vspace{0,1cm}\\
                        \text{with fixed }\cM 
     \end{subarray}\right\}     & \overset{1:1}\longrightarrow & \PP^1\bigl(\rquot{\cM}{(\cM\otimes\cI_x)}\bigr) \vspace{0,2cm} \;,\\
     \bigl(0\to\cM'\to\cM\to\cK_x\to0\bigr) & \longmapsto & \cM'/(\cM\otimes\cI_x)
    \end{array} $$
 (cf.\ paragraph \ref{geom_nb}) there is a unique 
 $$ \ses{\cM'}{\cM}{\cK_x} \;, $$
 up to isomorphism with fixed $\cM$, such that $\cL\to\cM$ lifts to
 $\cL\to\cM'$. 
 We call this the {\it sequence associated to $\cL\to\cM$ relative to $\Phi_x$},
 or for short the {\it associated sequence}, 
 and $[\cM']$ the {\it associated $\Phi_x$-neighbour}. 
 It follows that $\delta(\cM')\geq\delta(\cL,\cM)+d_x$.

 We summarise this.
\end{pg}

\begin{lemma} 
 \label{assoc_sequence}
 If $\cL\to\cM$ is a maximal subbundle, then the associated $\Phi_x$-neighbour $[\cM']$ has $\delta(\cM')=\delta(\cM)+d_x$, and
 $$ \sum_{\begin{subarray}{c} ([\cM],[\cM'],m)\in\,\cU_x([\cM])\vspace{0,1cm}\\ \delta(\cM')=\delta(\cM)+d_x \end{subarray}}\hspace{-0,6cm} m 
	 \hspace{0,4cm} = \hspace{0,2cm} \# \ \left\{\ \overline\cL\in\PP^1\bigl(\rquot{\cM}{(\cM\otimes\cI_x)}\bigr) \ \left| \ 
    \begin{subarray}{c}\exists\cL\to\cM\textrm{ maximal subbundle} \\ \textrm{with }\cL\equiv\overline\cL\pmod{\cM\otimes\cI_x}\end{subarray} 
    \ \right. \right\} \;. \mbox{\qed} $$
\end{lemma}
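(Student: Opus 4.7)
The plan is to handle the two assertions separately: first, that for a maximal subbundle $\cL\to\cM$ the associated $\Phi_x$-neighbour achieves the upper bound on $\delta$, and second, that every $\Phi_x$-neighbour achieving this upper bound arises in this way.

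For the first assertion I would combine the two bounds already in place. The construction in paragraph \ref{def_assoc_sequence} guarantees that $\cL\to\cM$ lifts to a subbundle $\cL\to\cM'$, whence
\[
\delta(\cL,\cM')\,=\,2\deg\cL-\deg\cM'\,=\,\delta(\cL,\cM)+d_x\,=\,\delta(\cM)+d_x
\]
using maximality of $\cL$. Therefore $\delta(\cM')\geq\delta(\cM)+d_x$; the reverse inequality is Lemma \ref{degree_of_nb}, forcing equality.

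For the second assertion I would exploit the bijection from paragraph \ref{def_assoc_sequence} between isomorphism classes of sequences $0\to\cM''\to\cM\to\cK_x\to 0$ with fixed $\cM$ and lines $\overline\cL\in\PP^1(\cM/(\cM\otimes\cI_x))$. By the definition of the multiplicity $m_x([\cM],[\cM'])$, the left-hand sum enumerates those $\overline\cL$ whose associated sequence satisfies $\delta(\cM'')=\delta(\cM)+d_x$. The forward inclusion---that each such $\overline\cL$ equals $\cL/\cL\cI_x$ for a maximal subbundle $\cL\to\cM$---is immediate from the first part, since a line $\overline\cL$ and any lift $\cL$ of it through $\cL\to\cM$ determine the same associated sequence. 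For the reverse inclusion I would start with a maximal subbundle $\cL''\to\cM''$ and let $\overline{\cL''}\to\cM$ be its saturation. A local analysis at $x$ along the lines of paragraph \ref{def_I_x} shows that $\deg\overline{\cL''}\in\{\deg\cL'',\,\deg\cL''+d_x\}$. The larger value would force $\delta(\overline{\cL''},\cM)=\delta(\cM)+2d_x$, which exceeds $\delta(\cM)$ and is impossible. Hence $\overline{\cL''}=\cL''$ as subbundles of $\cM$; computing $\delta(\cL'',\cM)=2\deg\cL''-\deg\cM=\delta(\cL'',\cM'')-d_x=\delta(\cM)$ identifies $\cL''$ as maximal in $\cM$, and since it visibly lifts to $\cM''$, the corresponding line in $\PP^1(\cM/(\cM\otimes\cI_x))$ is exactly the reduction of this maximal subbundle.

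The main obstacle is the local statement at $x$ controlling the jump of the degree under saturation: since $\cM/\cM''\simeq\cK_x$ is killed by $\pi_x$, the saturation $\overline{\cL''}_x$ of $\cL''_x$ in $\cM_x$ differs from $\cL''_x$ by at most one power of $\pi_x$, so the degree increase is either $0$ or $d_x$. Once this dichotomy is granted, the numerical inequalities on $\delta$ select the correct case, and the enumeration over $\PP^1(\cM/(\cM\otimes\cI_x))$ yields the claimed identity.
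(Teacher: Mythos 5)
Your proof is correct and follows essentially the same route the paper intends: the lemma is stated there as a summary of the preceding paragraphs, with the lower bound $\delta(\cM')\geq\delta(\cL,\cM)+d_x$ from the lift combined with Lemma \ref{degree_of_nb} for the first claim, and the bijection with $\PP^1\bigl(\rquot{\cM}{(\cM\otimes\cI_x)}\bigr)$ together with the saturation of subbundles (set up in paragraph \ref{def_I_x}) for the second. Your explicit local analysis at $x$ showing that saturation raises the degree by $0$ or $d_x$, and the ensuing $\delta$-inequalities ruling out the jump, correctly fill in the details the paper leaves implicit.
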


\begin{thm}
 \label{edges_of_cusps}
 Let $x$ be a place and $[D]\in\Cl X$ be a divisor of non-negative degree. The $\Phi_x$-neighbours $v$ of $c_D$ with $\delta(v)=\deg D+d_x$ are given by the following list:
 \begin{align*}
  (c_0,c_x,q+1) \ &\in \ \cU_x(c_0), \\
  (c_D,c_{D+x},2) \ &\in \ \cU_x(c_D)\quad\text{if }[D]\in(\Cl^0 X)[2]-\{0\}, \\
  (c_D,c_{D+x},1), (c_D,c_{-D+x},1) \ &\in \ \cU_x(c_D)\quad\text{if }[D]\in\Cl^0 X -(\Cl^0 X)[2],\text{ and} \\
  (c_D,c_{D+x},1) \ &\in \ \cU_x(c_D)\quad\text{if }\deg D\text{ is positive.}  
 \end{align*}
 For all $\Phi_x$-neighbours $v$ of $c_D$ not occurring in this list, $\delta(v)<\delta(c_D)+d_x$.
 If furthermore $\deg D>d_x$, then $\delta(v)=\deg D-d_x$, and if $\deg D>m_X+d_x$ where $m_X = \max\{2g_X-2,0\}$, then
 $$ \cU_x(c_D) \ = \ \{(c_D,c_{D-x},q_x),(c_D,c_{D+x},1)\} \;. $$
\end{thm}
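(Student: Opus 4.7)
The plan is to fix a representative $\cM = \cL_D \oplus \cO_X$ of $c_D$. Proposition \ref{number_maximal_subbundles}\,\eqref{max4} gives $\delta(\cM) = \deg D$, and parts \eqref{max1} and \eqref{max3} of the same proposition classify its maximal subbundles: the first factor $\cL_D \hookrightarrow \cM$ is always maximal; the second factor $\cO_X \hookrightarrow \cM$ is maximal precisely when $\deg D = 0$; and if $D = 0$ there are $q+1$ distinct such embeddings of $\cO_X$. By Lemma \ref{assoc_sequence}, each maximal subbundle determines a $\Phi_x$-neighbour with $\delta = \deg D + d_x$. Conversely, any $\Phi_x$-neighbour $[\cM']$ with this maximal $\delta$ possesses, by Proposition \ref{number_maximal_subbundles}\,\eqref{max1}, a unique line subbundle of degree $\deg D$, which embeds in $\cM$ as a maximal subbundle. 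Hence the top-$\delta$ neighbours of $c_D$ correspond exactly to the enumerated maximal subbundles of $\cM$.

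The central geometric computation is the following. If $\cL_D \hookrightarrow \cM$ is the chosen maximal subbundle, then $\cL_D$ lifts to $\cM'$, while the complementary second factor $\cO_X$ does not lift (its image in $\cM/\pi_x\cM$ differs from the line of $\cL_D$, which defines $\cM'$); nevertheless its subsheaf $\cI_x \subset \cO_X$ does lift. Degree counting gives $\cM' = \cL_D \oplus \cI_x$, so $[\cM'] = c_{D+x}$ with multiplicity $1$. Symmetrically, a maximal $\cO_X \hookrightarrow \cM$ produces $\cM' \simeq \cO_X \oplus \cL_D\cI_x$, giving $[\cM'] = c_{D-x}$. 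By Proposition \ref{char_PBundec}, $c_{D+x} = c_{-D+x}$ iff $2[D] = 0$ or $2[x] = 0$. Assembling these observations across the four subcases ($D = 0$; $[D] \in (\Cl^0 X)[2] \setminus \{0\}$; $[D] \in \Cl^0 X - (\Cl^0 X)[2]$; $\deg D > 0$) recovers exactly the four edges listed in the theorem, with the stated multiplicities.

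For the remaining assertions, assume $\deg D > d_x$, so $\cL_D$ is the unique maximal subbundle of $\cM$. Any $\Phi_x$-neighbour $[\cM']$ with $\delta(\cM') < \deg D + d_x$ must arise when $\cL_D$ fails to lift; paragraph \ref{def_I_x} then provides $\cL_{D-x} = \cL_D\cI_x \hookrightarrow \cM'$ with $\delta(\cL_{D-x},\cM') = \deg D - d_x > 0$, and Proposition \ref{number_maximal_subbundles}\,\eqref{max2} forces $\delta(\cM') = \deg D - d_x$. If moreover $\deg D > m_X + d_x$, then $\delta(\cL_{D-x},\cM') > 2g_X - 2$ and Proposition \ref{range_delta} splits $\cM' \simeq \cL_{D-x} \oplus (\cM'/\cL_{D-x})$. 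A diagram chase using $\cM/\cL_{D-x} \simeq \kappa_x \oplus \cO_X$ and the sequence $0 \to \cM' \to \cM \to \cK_x \to 0$ identifies $\cM'/\cL_{D-x} \simeq \cO_X$, so $[\cM'] = c_{D-x}$. Proposition \ref{Phi_x_neighbours} forces the multiplicities in $\cU_x(c_D)$ to sum to $q_x + 1$, and since the edge to $c_{D+x}$ carries weight $1$, the edge to $c_{D-x}$ must carry weight $q_x$.

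I expect the chief technical point to be the direct-sum identification $\cM' = \cL_D \oplus \cI_x$ in the top-$\delta$ step, where one must argue that the complementary factor contributes precisely the ideal subsheaf $\cI_x$ rather than an arbitrary rank-$1$ extension of $\cI_x$ by $\cL_D$. This relies on a careful comparison of the local description of $\cM'$ with the image of $\cO_X$ in $\cM/\pi_x\cM$.
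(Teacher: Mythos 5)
Your proposal is correct and follows essentially the same route as the paper: Lemma \ref{assoc_sequence} plus the classification of maximal subbundles in Proposition \ref{number_maximal_subbundles} for the neighbours with $\delta(v)=\deg D+d_x$, and then the observation that when $\cL_D$ fails to lift, $\cL_D\cI_x\to\cM'$ is a line subbundle with quotient $\cO_X$, combined with Propositions \ref{number_maximal_subbundles} and \ref{range_delta} and the weight sum $q_x+1$. The only cosmetic difference is that the paper identifies $\cM'/\cL_D\cI_x\simeq\cO_X$ by a determinant computation where you use a diagram chase, and it leaves the explicit identification $\cM'=\cL_D\oplus\cI_x$ of the top-$\delta$ neighbours implicit where you spell it out.
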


\begin{proof}
 By Lemma \ref{assoc_sequence}, the $\Phi_x$-neighbours $v$ of $c_D$ with $\delta(v)=\delta(c_D)+d_x$ counted with multiplicity correspond to the maximal subbundles of a rank $2$ bundle $\cM$ that represents $c_D$. Since $\delta(\cM)=\delta(c_D)\geq0$, the list of all $\Phi_x$-neighbours $v$ of $c_D$ with $\delta(v)=\deg D+d_x=\delta(c_D)+d_x$ follows from the different cases in Proposition \eqref{number_maximal_subbundles} \eqref{max1} and \eqref{max3}. Be aware that $c_D=c_{-D}$ by Proposition \ref{char_PBundec}; hence it makes a difference whether or not $D$ is $2$-torsion.
 
 For the latter statements, write $\cM=\cL_D\oplus\cO_X$ and let $\cM'$ be a subsheaf of $\cM$ with cokernel $\cK_x$ such that $\delta(\cM')<\delta(\cM)+d_x$. Then $\cL_D\to\cM$ does not lift to $\cM'$, but $\cL_D\cI_x\to\cM'$ is a line subbundle and 
 $$ \cM'/\cL_D\cI_x \ \simeq \ (\det\cM')(\cL_D\cI_x)^\vee \ \simeq \ (\det\cM)\cI_x(\cL_D\cI_x)^\vee \ \simeq \ \cL_D\cI_x(\cL_D\cI_x)^\vee \ \simeq \ \cO_X \;. $$ 
 
 If $\deg D>d_x$, then
 $$\delta(\cL_D\cI_x,\cM') \ = \ \deg \cL_D\cI_x - \deg \cO_X \ = \ \deg D-d_x \ > \ 0\;.$$
 Proposition \ref{number_maximal_subbundles} \eqref{max1} implies that $\cL_D\to\cM$ is the unique maximal subbundle of $\cM'$ and thus $\delta(\cM')=\delta(\cM)-d_x$. 

 If $\delta(\cM)>m_X+d_x$, then $\delta(\cM')>m_X\geq2g_X-2$, hence $\cM'$ decomposes and represents $c_{D-x}$. Since the multiplicities of all $\Phi_x$-neighbours of a vertex sum up to $q_x+1$, this proves the last part of our assertions.
\end{proof}

\begin{df}
 \label{def_cusps_and_nucleus}
 Let $x$ be a place. Let the divisor $D$ represent a class $[D]\in\Cl \cO_X^x= \Cl X\,/\langle x\rangle$. We define the {\it cusp $\cC_x(D)$ (of $D$ in $\cG_x$)} as the full subgraph of $\cG_x$ with vertices
 $$ \Vertex \cC_x(D) \ = \ \bigl\{\; c_{D'}\,\bigl|\,[D']\equiv [D]\pmod{\langle x\rangle},\text{ and }\deg D'>m_X\,\bigr.\bigr\} \;, $$
 and the {\it nucleus $\cN_x$ (of $\cG_x$)} as the full subgraph of $\cG_x$ with vertices
 $$ \Vertex \cN_x \ = \ \bigl\{\, [\cM]\in\PBun_2 X \,\bigl|\, \delta(\cM) \leq m_X+d_x \,\bigr.\bigr\} \;. $$
\end{df}

\begin{pg}
 Theorem \ref{edges_of_cusps} determines all edges of a cusp $\cC_x(D)$. If $m_X<\deg D\leq m_X+d_x$, the cusp can be illustrated as in Figure \ref{figure_cusp}. Note that a cusp is an infinite graph. It has a regular pattern that repeats periodically. In diagrams we draw the pattern and indicate its periodic continuation with dots.
\end{pg}

\begin{figure}[htb]
 \begin{center}
  \includegraphics{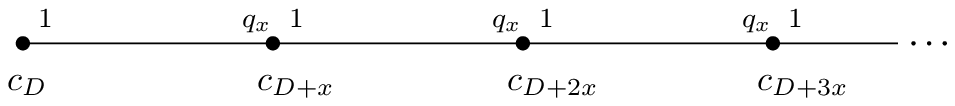}
  \caption{A cusp}
  \label{figure_cusp}
 \end{center} 
\end{figure}

 We summarise the theory so far in the following theorem that describes the general structure of $\cG_x$.

\begin{thm}
 \label{gen_str_thm}
 Let $x$ be a place of degree $d_x$ and $h_X=\#\Cl^0X$ be the class number.
 \begin{enumerate}
  \item \label{genstrthm1} $\cG_x$ has $h_Xd_x$ cusps and
        $$ \cG_x \ = \ \cN_x \cup \coprod_{[D]\in\Cl \cO_F^x} \cC_x(D) \;, $$
        where $\Vertex\cN_x\cap\Vertex\cC_x(D)=\{c_D\}$ if $m_X<\deg D\leq m_X+d_x$. The union of the edges is disjoint. Different cups are disjoint subgraphs.
  \item \label{genstrthm2} $\cN_x$ is finite and has $\#\bigl(\Cl \cO_F^x\,/\,2\Cl \cO_F^x\bigr)$ components. Each vertex of $\cN_x$ is at distance $\leq(2g_X+m_X+d_x)/d_x$ from some cusp. The associated CW-complexes of $\cN_x$ and $\cG_x$ are homotopy equivalent.
  \item \label{genstrthm3} If $[D]\in\Cl\cO_F^x$, then $\Vertex \cC_x(D)\subset \PBundec X$. Furthermore
        \begin{align*}
		  \PBundec X \ &\subset \ \{v\in\Vertex\cG_x\mid\delta(v)\geq0\}\;, \\
		  \PBungi X  \ &\subset \ \{v\in\Vertex\cG_x\mid\delta(v)\leq2g-2\} \text{ and} \\
		  \PBuntr X  \ &\subset \ \{v\in\Vertex\cG_x\mid\delta(v)<0\text{ and even}\} \;. 
	\end{align*}
 \end{enumerate}
\end{thm}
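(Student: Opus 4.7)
My approach is to treat parts (iii), (i), (ii) in that order, since the vertex-class bounds of (iii) feed into the cusp decomposition of (i), and the finiteness and homotopy claims of (ii) use that decomposition. All three draw essentially on Theorem \ref{edges_of_cusps}, Propositions \ref{range_delta} and \ref{number_maximal_subbundles}, and the vertex classification of paragraph \ref{labelling_vertices}.

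For part (iii), each inclusion is almost immediate. The inclusion $\Vertex\cC_x(D)\subset\PBundec X$ holds by definition of the cusp, whose vertices are $c_{D'}$. The bound $\PBundec X\subset\{v\mid\delta(v)\geq 0\}$ is Proposition \ref{number_maximal_subbundles} \eqref{max4}. The bound $\PBungi X\subset\{v\mid\delta(v)\leq 2g_X-2\}$ is the contrapositive of Proposition \ref{range_delta}: a larger $\delta$ would force decomposition already over $\FF_q$, a fortiori over $\overline\FF_q$. The bound $\PBuntr X\subset\{v\mid\delta(v)<0\text{ and even}\}$ is Proposition \ref{delta_traces} combined with the congruence $\delta(\cM)\equiv\deg\cM\pmod 2$.

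For part (i), the key observation is that any vertex $v$ with $\delta(v)>m_X\geq 2g_X-2$ admits a unique maximal line subbundle $\cL_D$ (by Propositions \ref{range_delta} and \ref{number_maximal_subbundles} \eqref{max1}), whose degree $\deg D=\delta(v)>0$ is positive, so $D$ is uniquely determined in $\Cl X$. Hence $v=c_D$ for this canonical $D$, and its reduction $[D]\in\Cl\cO_F^x$ assigns $v$ to exactly one cusp $\cC_x([D])$. Running over representatives $D+nx$ with $\deg(D+nx)>m_X$ enumerates the vertices of that cusp; they are pairwise distinct and lie in $\PBundec X$. Distinct classes in $\Cl\cO_F^x$ give disjoint vertex sets by the uniqueness of the positive-degree representative. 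The cusp--nucleus intersection reduces to the unique vertex whose degree falls in the length-$d_x$ interval $(m_X,m_X+d_x]$. The internal edges, the disjointness of cusp edges from nucleus edges, and the standard picture of a cusp all follow from Theorem \ref{edges_of_cusps}: for $\deg D>m_X+d_x$ the only $\Phi_x$-neighbours of $c_D$ are $c_{D\pm x}$, which stay in the same cusp, while only at the boundary vertex do edges leave the cusp into $\cN_x$. Since $|\Cl\cO_F^x|=h_Xd_x$, this produces exactly $h_Xd_x$ cusps.

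For part (ii), finiteness of $\cN_x$ splits according to the three vertex classes: $\PBundec X\cap\Vertex\cN_x$ is parametrised via Proposition \ref{char_PBundec} by divisor classes of absolute degree at most $m_X+d_x$, a finite set because $\#\Cl^0 X=h_X$; $\PBuntr X$ is handled by Proposition \ref{char_PBuntr} together with the finiteness of $\Cl X'/\Cl X$; $\PBungi X$ is finite by Corollary \ref{finite_indec_part}. The component count of $\cN_x$ matches that of $\cG_x$, namely $\#(\Cl\cO_F^x/2\Cl\cO_F^x)$ from Proposition \ref{strong_approx+comp}, because by part (i) every vertex outside $\cN_x$ lies in a cusp that is connected through its single boundary vertex to $\cN_x$, so cusps neither disconnect nor merge nuclear components. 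The distance bound is obtained by iterating Lemma \ref{assoc_sequence}: moving along the associated edge of a maximal subbundle raises $\delta$ by $d_x$, so starting from $\delta\geq -2g_X$ one enters the cusp region $\delta>m_X$ within $(2g_X+m_X+d_x)/d_x$ steps. Finally, the homotopy equivalence $\cG_x\simeq\cN_x$ follows by deformation retracting each cusp (a ray) onto its boundary vertex. The main obstacle throughout is the careful bookkeeping of part (i): the projective identification $c_D=c_{-D}$ requires fixing a canonical positive-degree representative before cusps can be indexed by $\Cl\cO_F^x$, and both the disjointness of distinct cusps and the count $h_Xd_x$ rest on this choice.
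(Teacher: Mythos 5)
Your proposal is correct and follows essentially the same route as the paper: part (iii) from Propositions \ref{number_maximal_subbundles}, \ref{range_delta} and \ref{delta_traces}, part (i) from Theorem \ref{edges_of_cusps} together with the bookkeeping $c_D=c_{-D}$ of Proposition \ref{char_PBundec}, and part (ii) from Corollary \ref{finite_indec_part}, Proposition \ref{strong_approx+comp}, Lemma \ref{assoc_sequence} and the contractibility of the cusps. The one point you gloss over is that to exclude an edge running \emph{from} a non-cusp nucleus vertex \emph{to} a cusp vertex $c_{D'}$ with $\deg D'>m_X+d_x$ one must invoke the symmetry of edges from paragraph \ref{sym_edges}, since Theorem \ref{edges_of_cusps} only controls the edges originating at such $c_{D'}$; this is exactly how the paper closes that case.
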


\begin{figure}[h]
 \begin{center}
  \includegraphics[width=1.0\textwidth,angle=0]{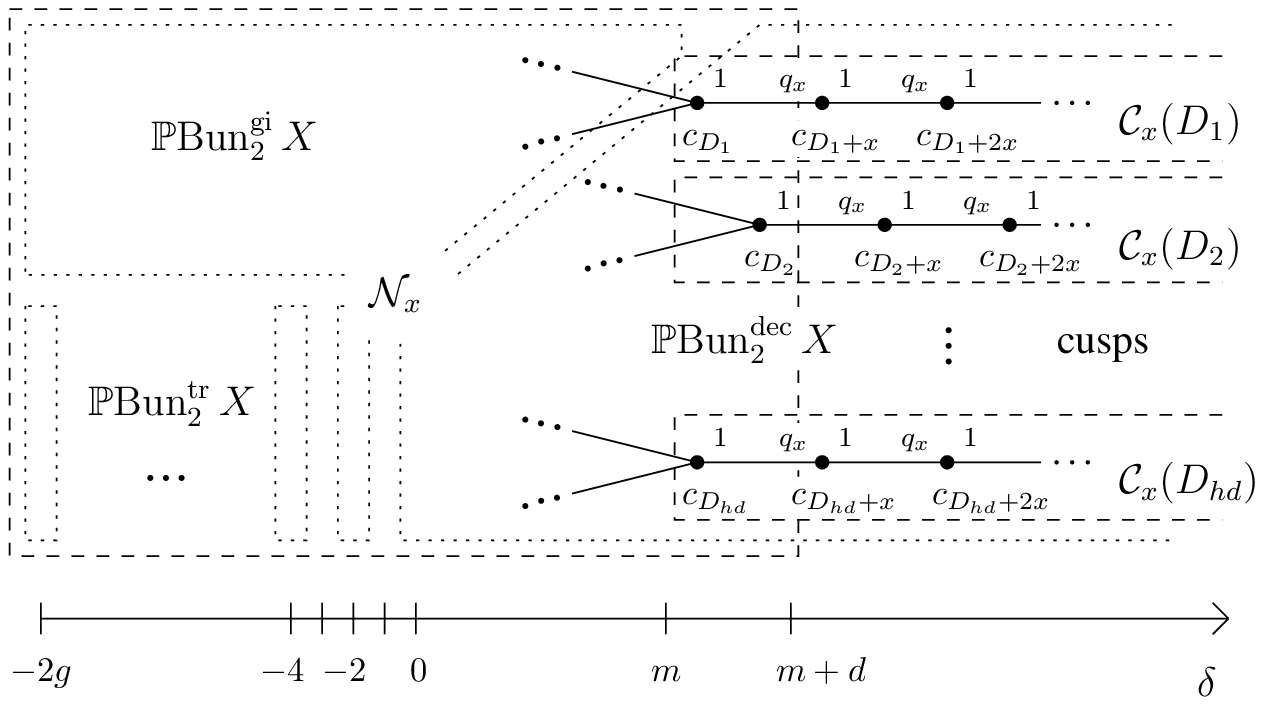}
  \caption{General structure of $\cG_x$} 
  \label{figure_genstr}
 \end{center} 
\end{figure}

\begin{pg}[Remark on Figure \ref{figure_genstr}]
 Define $h=h_X$, $m=m_X$, $d=d_x$ and $q_x=q^{\deg x}$. Further let $D_1,\dotsc,D_{hd}$ be representatives for $\Cl\cO_F^x$ with $m<\deg D_i\leq m+d$ for $i=1\dotsc,hd$. The cusps $\cC_x(D_i)$, $i=1,\dotsc,hd$, can be seen in Figure \ref{figure_genstr} as the subgraphs in the dashed regions that are open to the right. The nucleus $\cN_x$ is contained in the dotted rectangle to the left. Since we have no further information about the nucleus, we leave the area in the rectangle open.

 The $\delta$-line on the bottom of the picture indicates the value $\delta(v)$ for the vertices $v$ in the graph that lie vertically above $\delta(v)$.

 The dotted regions refer to the sort of vertices, which are elements of either $\PBungi X$, $\PBuntr X$, or $\PBundec X$. All lines are drawn with reference to the $\delta$-line to reflect part \eqref{genstrthm3} of the theorem.
\end{pg}

\begin{proof}
 The number of cusps is $\#\Cl\cO_X^x \,=\,\#(\rquot{\Cl X}{\langle x\rangle}) \,=\, \#\Cl^0X\cdot\#(\ZZ/d_x\ZZ)\,=\,h_Xd_x$. That the vertices of cusps are disjoint and only intersect in the given point with the nucleus, is clear by definition. Regarding the edges, recall from paragraph \ref{Psi_x} that if there is an edge from $v$ to $w$ in $\cG_x$, then there is also an edge from $w$ to $v$. But Theorem \ref{edges_of_cusps} implies that each vertex of a cusp that does not lie in the nucleus only connects to a vertex of the same cusp, hence every edge of $\cG_x$ either lies in a cusp or in the nucleus. Different cusps are disjoint by definition. This shows \eqref{genstrthm1}.
 
 The nucleus is finite since $\PBunindec X$ is finite by Corollary \ref{finite_indec_part} and since the intersection $\PBundec X\cap\Vertex\cN_x$ is finite by the definition of the nucleus and Proposition \ref{char_PBundec}. Since the cusps are contractible as CW-complexes, $\cN_x$ and $\cG_x$ have the same homotopy type. Therefore $\cN_x$ has $\#\bigl(\Cl \cO_F^x\,/\,2\cO_F^x\bigr)$ components by Proposition \ref{strong_approx+comp}. By Lemma \ref{assoc_sequence}, every vertex $v$ has a $\Phi_x$-neighbour $w$ with $\delta(w)=\delta(v)+d_x$, thus the upper bound for the distance of vertices in the nucleus to one of the cusps. This proves \eqref{genstrthm2}.
 
 The four statements of Part \eqref{genstrthm3} follow from the definition of a cusp, Proposition \mbox{\ref{number_maximal_subbundles} \eqref{max4}}, Proposition \ref{range_delta} and Proposition \ref{delta_traces}, respectively.
\end{proof}

\begin{eg}[The projective line]
 \label{eg_P1}
 Let $X$ be the projective line over $\FF_q$. Then $g_X=0$, $h_X=1$ and $X$ has a closed point $x$ of degree $1$.
 This means that
 $$ \PBundec X \ = \ \{c_{nx}\}_{n\geq0} \;. $$
 Since an indecomposable bundle $\cM$ must satisfy both $\delta(\cM)\geq0$ and $\delta(\cM)\leq-2$ which is impossible,
 all projective line bundles decompose.
 Theorem \ref{edges_of_cusps} together with the fact that the weights around each vertex sum to $q+1$ in the graph of $\Phi_x$
 determines $\cG_x$ completely, as illustrated in Figure \ref{P1-graph}. 
\end{eg}

\begin{figure}[htb]
 \begin{center}
  \includegraphics{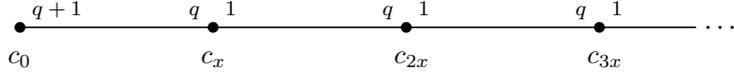}
  \caption{The graph of $\Phi_x$ for a degree one place $x$ of a rational function field}
  \label{P1-graph}
 \end{center} 
\end{figure}


\section{Application to automorphic forms}
\label{section_automorphic_forms}

\noindent
 In this section, we explain how to recover automorphic forms as functions on the graph and indicate how unramified automorphic forms can be explicitly calculated as functions on the graph by solving a finite system of linear equations. We begin with recalling the definition of an automorphic form.

\begin{pg}
 A function $f\in C^0(G_\AA)$ is called an \emph{automorphic form (for $\PGL_2$ over $F$)} if there is a compact open subgroup $K'$ of $G_\AA$ such that $f$ is left $G_F$-invariant and right $K'$-invariant and if it generates a finite-dimensional $\cH_{K'}$-subrepresentation $\cH_{K'}(f)$ of $C^0(G_\AA)$. We denote the space of automorphic forms by $\cA$ and note that the action of $\cH$ on $C^0(G_\AA)$ restricts to $\cA$. We denote the subspace of right $K'$-invariant automorphic forms by $\cA^{K'}$, a space on which $\cH_{K'}$ acts. We can reinterpret the elements in $\cA^{K'}$ as functions on $\lrquot{G_F}{G_\AA}{K'}$, which is the vertex set of the graph $\cG_{\Phi,K'}$ of a Hecke operator $\Phi\in\cH_{K'}$. 

 We shall investigate the space $\cA^K$ of unramified automorphic forms in more detail. We write $f(v)$ or $f(\cM)$ for the value $f(g)$ if $v=[g]$ is the class of $g$ in $\lrquot{G_F}{G_\AA}{K}$ and $\cM=\cM_g$ is the rank $2$ bundle that corresponds to $g$. In particular, we can see $f$ also as a function on $\PBun_2 X$.

 The space of automorphic forms decomposes into a cuspidal part $\cA_0$, a part $\cE$ that is generated by derivatives of Eisenstein series and a part $\cR$ that is generated by derivatives of residues of Eisenstein series (for complete definitions, cf.\ \cite{Lorscheid1}). The decomposition decents to unramified automorphic forms: $\cA^K=\cA_0^K\oplus\cE^K\oplus\cR^K$. We describe functions in these parts separately.
\end{pg}

\begin{pg}
 We start with some considerations for $\Phi_x$-eigenfunctions as functions on a cusp $\cC_x(D)$ where $D$ is a divisor with $m_X<\deg D\leq m_X+d_x$:
 \begin{center} \includegraphics{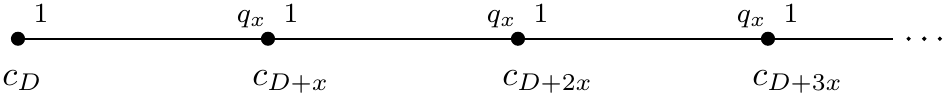} \end{center} 
 Let $f\in\cA^K$ satisfy the eigenvalue equation $\Phi_x f=\lambda f$, then we obtain for every $i\geq1$,
 \begin{equation}\label{eq_cusp_eigenvalue} f(c_{D+(i+1)x})\ =\ \lambda\, f(c_{D+ix})\,-\,q_x\,f(c_{D+(i-1)x}) \;.  \end{equation}
 Thus the restriction of $f$ to $\Vertex\cC_x(D)$ is determined by the eigenvalue $\lambda$ once its values at $c_D$ and $c_{D+x}$ are given. This consideration justifies that we only have to evaluate the eigenvalue equation at vertices of the nucleus to determine the eigenfunctions of $\Phi_x$. 
\end{pg}

\begin{pg}
 The space $\cA_0^K$ has a basis of $\cH_K$-eigenfunctions and every unramified cusp form has a compact, i.e.\ finite, support in $\lrquot{G_F}{G_\AA}{K}$. By the eigenvalue equation \eqref{eq_cusp_eigenvalue} it follows that an Hecke-eigenfunction $f\in\cA_0^K$ must vanish on all vertices of a cusp in order to have compact support. Thus the support of a cusp form is contained in the finite set $V$ of vertices $v$ with $\delta(v)\leq m_X$, and $\cA_0^K$ can be determined by considering a finite number of eigenvalue equations for $\Phi_x$.

 These eigenvalue equations can be described in terms of the matrix $M_x$ associated to $\Phi_x$ (cf.\ paragraph \ref{associated_matrix}). Namely, $\cA_0^K$ is generated by the eigenfunctions of $M_x$ whose support is contained in $V$. This problem can be rephrased into a question on the finite submatrix $M'_x=(a_{v,w})_{v\in V,w\in\Vertex\cN_x}$ of $M_x=(a_{v,w})_{v,w\in\Vertex\cG_x}$, which we forgo to spell out.

 In \cite{Moreno} one finds a finite set $S$ of places such that an $\cH_K$-eigenfunction $f\in\cA_0^K$ is already characterised (up to multiple) by its $\Phi_x$-eigenvalues for $x\in S$. This means that one finds the cuspidal $\cH_K$-eigenfunctions by considering the eigenvalue equations for the finitely many vertices $v\in V$ and the finitely many Hecke operators $\Phi_x$ for $x\in S$.
\end{pg}

\begin{pg}
 We proceed with $\cE^K\oplus\cR^K$. This space decomposes into a direct sum of generalised (infinite-dimensional) Hecke-eigenspaces $\cE(\chi)$ where $\chi$ runs through all unramified Hecke characters, i.e.\ continuous group homomorphisms $\chi:\lrquot{F^\times}{\AA^\times}{\cO_\AA^\times}\to\CC^\times$ modulo inversion; in particular, $\cE(\chi)=\cE(\chi^{-1})$. The generalised eigenspace $\cE(\chi)$ is characterised by its unique Hecke-eigenfunction $\varE(\blanc,\chi)$ (up to scalar multiple), which in turn is determined by its $\Phi_x$-eigenvalues $\lambda_x(\chi)=q_x^{1/2}\bigl(\chi(\pi_x)+\chi^{-1}(\pi_x)\bigr)$ for $x\in\norm X$. We have $\cE(\chi)\subset\cE$ if and only if $\chi^2\neq\norm\ ^{\pm1}$, in which case $\varE(\blanc,\chi)$ is an Eisenstein series. For $\chi^2=\norm\ ^{\pm1}$, $\varE(\blanc,\chi)$ is a residue of an Eisenstein series. For details, see \cite{Lorscheid1}; in particular, Theorem 11.10.

 We say that a subset $S\subset\norm X$ generates $\Cl X$ if the classes of the prime divisors corresponding to the places in $S$ generate $\Cl X$. Let $S$ be a set of places that generates $\Cl X$ and satisfies that for every decomposition $S=S_+\cup S_-$ either $2\Cl X=2\langle S_+\rangle$ or $2\Cl X=2\langle S_-\rangle$. This set can be chosen to be finite. Then the Hecke eigenfunction $\varE(\blanc,\chi)$ is uniquely determined (up to scalar multiples) by the $\Phi_x$-eigenvalues $\lambda_x(\chi)$. For details, see \cite[pg.\ 3.7.10]{Lorscheid-thesis}. 

 In order to describe an Eisenstein series or a residue of an Eisenstein series, one only needs to consider the finitely many eigenvalue equations for vertices $v\in V$ for the finitely many Hecke-operators $\Phi_x$ with $x\in S$. Derivatives of Eisenstein series or residues are similarly determined by generalised eigenvalue equations, see \cite[Lemmas 11.2 and 11.7]{Lorscheid1} for the explicit formulas.

 In the case of a residue, i.e.\ $\chi^2=\norm\ ^{\pm1}$, the function $f=\varE(\blanc,\chi)$ has a particular simple form. Namely, $\chi$ is of the form $\omega\norm\ ^{\pm 1/2}$ where $\omega^2=1$, and thus 
 $$ \lambda_x(\chi) \ = \ q_x^{1/2}\bigl(\omega(\pi_x)\norm{\pi_x}^{\pm 1/2}+\omega(\pi_x)\norm{\pi_x}^{\mp 1/2}\bigr) \ = \ \omega(\pi_x) (q_x+1). $$
 Since every vertex $v$ has precisely $(q_x+1)$ $\Phi_x$-neighbors (counted with multiplicities), we have $f(v)=\omega(\pi_x)f(w)$ for all adjacent vertices $v$ and $w$. 
\end{pg}

\begin{rem}
 The methods of this paragraph will be applied in \cite{Lorscheid3} to determine the space of unramified cusp forms for an elliptic function field and to show that there are no unramified toroidal cusp forms in this this case.
\end{rem}


\section{Finite-dimensionality results}
\label{section_finite-dimensionality}

\noindent
 In this section, we will show how the theory of the last sections can be used to show finite dimensionality of subspaces of $C^0(G_\AA)^K$ whose elements $f$ are defined by a condition of the form
 $$ \sum_{i=1}^n m_i \Phi(f)(g_i) \ = \ 0 $$
 for all $\Phi\in\cH_K$ (with $m_i\in\CC$ and $g_i\in G_\AA$ being fixed). We will explain a general technique and apply it to show that the spaces of functions in $C^0(G_\AA)^K$ satisfying the cuspidal condition respective the toroidal condition are finite-dimensional. In particular, this implies that all functions satisfying one of these conditions are automorphic forms.

\begin{pg}
 Write $\PCl X$ for the set of divisor classes that are represented by prime divisors and $\ECl X$ for the semigroup they generate, viz.\ for all classes that are represented by effective divisors. In particular, $\ECl X$ contains $0$, the class of the zero divisor, and for all other $[D]\in\ECl X$, we have $\deg D>0$. Denote by $\Cl^d X$ the set of divisor classes of degree $d$ and by $\Cl^{\geq d} X$ the set of divisor classes of degree at least $d$. Let $g_X$ be the genus of $X$.
\end{pg}

\begin{lemma}
 \label{many_effective_divisors}
 $$ \Cl^{\geq g_X} X \quad\subset\quad \ECl X \;. $$
\end{lemma}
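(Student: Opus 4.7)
The plan is to invoke Riemann--Roch. Given a class $[D] \in \Cl^{\geq g_X} X$, pick any divisor $D$ representing it, so $\deg D \geq g_X$. What we want to show is that $[D]$ admits an effective representative, equivalently that the associated line bundle $\cL_D$ has a nonzero global section, i.e.\ $\ell(D) := \dim_{\FF_q} H^0(X, \cL_D) \geq 1$.

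First I would apply the Riemann--Roch theorem for the curve $X$, which states
$$ \ell(D) - \ell(K_X - D) \ = \ \deg D - g_X + 1 \;, $$
where $K_X$ is a canonical divisor. Since $\ell(K_X - D) \geq 0$, this gives the lower bound $\ell(D) \geq \deg D - g_X + 1 \geq 1$ under our hypothesis $\deg D \geq g_X$. Hence there exists a nonzero section $s \in H^0(X, \cL_D)$, and its divisor of zeros $D' := \operatorname{div}(s) + D$ is an effective divisor linearly equivalent to $D$. Therefore $[D] = [D'] \in \ECl X$.

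There is no real obstacle here: the only substantive input is Riemann--Roch, and the inequality $\deg D \geq g_X$ is exactly the threshold at which positivity of $\ell(D)$ is forced regardless of the size of $\ell(K_X - D)$. No case distinctions or reduction-theoretic input is needed, and the argument makes no use of the specific nature of the ground field $\FF_q$ beyond the validity of Riemann--Roch for smooth projective curves.
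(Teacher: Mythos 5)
Your proof is correct and follows essentially the same route as the paper: Riemann--Roch plus the nonnegativity of $\ell(K_X-D)$ forces $\ell(D)\geq\deg D+1-g_X>0$, hence an effective representative. (Incidentally, you state the Riemann--Roch term correctly as $\ell(K_X-D)$, whereas the paper's displayed formula has a small typo writing $l(D-C)$; the argument is unaffected.)
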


\begin{proof}
 Let $C$ be a canonical divisor on $X$, which is of degree $2g_X-2$. For a divisor $D$, define $l(D)=\dim_{\FF_q}H^0(X,\cL_D)$. We have $[D]\in\ECl X$ if and only if $l(D)>0$, cf.\ \cite[Section IV.1]{Hartshorne}. The Riemann-Roch theorem is 
 $$ l(D)\,-\,l(D-C) \ = \ \deg D\,+\,1\,-\,g_X \;, $$
 cf.\ \cite[Thm.\ IV.1.3]{Hartshorne}. 
 
 If now $[D]\in\Cl^{\geq g_X} X$, then $\deg D\geq g_X$ and the Riemann-Roch theorem implies that $l(D)\geq \deg D+1-g_X>0$.
\end{proof}

\begin{pg}
 Let $D$ be an effective divisor. Then it can be written in a unique way up to permutation of terms as a sum of prime divisors $D=x_1+\ldots+x_n$. We set $\Phi_D=\Phi_{x_1}\dotsm\Phi_{x_n}$. Since $\cH_K$ is commutative, $\Phi_D$ is well-defined. Further we briefly write $\cG_D$ for the graph $\cG_{\Phi_D,K}$ of $\Phi_D$, and $\cU_D(v)$ for $\cU_{\Phi_D,K}(v)$.
 
 Let $[D]\in\Cl X$. Recall from paragraph \ref{det_and_deg} that $\cL_D$ denotes the associated line bundle and from paragraph \ref{labelling_vertices} that $c_D$ denotes the vertex that is represented by \mbox{$\cL_D\oplus\cO_X$}. Recall from Proposition \ref{number_maximal_subbundles} \eqref{max4} that $\delta(c_D)=\norm{\deg D}$ where $\delta$ is defined as in paragraph \ref{intro_reduction}. 
\end{pg}

\begin{lemma}
 \label{neighbours_of_c_0}
 Let $D$ be a non-trivial effective divisor.
 \begin{enumerate}
  \item\label{nb1} Let $v,v'\in\Vertex\cG_D$. If $v'$ is a $\Phi_D$-neighbour of $v$, then $\norm{\delta(v')-\delta(v)}\leq\deg D$.
  \item\label{nb2} Let $[\cM]\in\Vertex\cG_D$. Every maximal subbundle $\cL\to\cM$ lifts to a maximal subbundle $\cL\to\cM'$ of a uniquely determined rank $2$ bundle $\cM'$ such that $[\cM']$ is a $\Phi_D$-neighbor of $[\cM]$ with $\delta(\cM')=\delta(\cM)+\deg D$. Conversely, every maximal subbundle $\cL\to\cM'$ extends to a maximal subbundle $\cL\to\cM$ if $[\cM']$ is a $\Phi_D$-neighbor of $[\cM]$ with $\delta(\cM')=\delta(\cM)+\deg D$.
\end{enumerate}	
\end{lemma}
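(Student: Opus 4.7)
I would prove the lemma by induction on the length of $D$, i.e.\ the number $n$ of prime divisors (with multiplicity) in a decomposition $D=x_1+\cdots+x_n$. The inductive step rests on the identity $\Phi_D=\Phi_x\ast\Phi_{D'}$ whenever $D=D'+x$, together with the description of edges of $\cG_{\Phi_1\ast\Phi_2}$ from paragraph~\ref{algebra_strucure_in_graphs}: $v'$ is a $\Phi_D$-neighbour of $v$ if and only if there is an intermediate vertex $v''$ which is a $\Phi_x$-neighbour of $v$ and of which $v'$ is a $\Phi_{D'}$-neighbour.

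For the base case $D=x$, part~(i) is Lemma~\ref{degree_of_nb}. For the forward direction of (ii), Lemma~\ref{assoc_sequence} attaches to a maximal subbundle $\cL\to\cM$ a unique exact sequence $0\to\cM'\to\cM\to\cK_x\to 0$ in which $\cL\to\cM$ lifts to $\cL\to\cM'$, with $\delta(\cM')=\delta(\cM)+d_x$. The lift is itself a subbundle, because $\cM'/\cL$ embeds in the locally free rank-$1$ sheaf $\cM/\cL$ and is therefore torsion-free, hence locally free on the smooth curve $X$; and from $\deg\cM'=\deg\cM-d_x$ one computes $\delta(\cL,\cM')=\delta(\cL,\cM)+d_x=\delta(\cM)+d_x=\delta(\cM')$, so $\cL\to\cM'$ is maximal. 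For the converse, start from a maximal subbundle $\cL\to\cM'$ and let $\overline\cL\to\cM$ be the saturation of the composite $\cL\to\cM'\hookrightarrow\cM$; the quotient $\overline\cL/\cL$ injects into the torsion of $\cM/\cL$, which in turn embeds into $\cK_x$ since $\cM'/\cL$ is locally free. If one had $\overline\cL/\cL=\cK_x$, then $\delta(\overline\cL,\cM)=\delta(\cL,\cM)+2d_x=\delta(\cL,\cM')+d_x=\delta(\cM)+2d_x$, contradicting $\delta(\overline\cL,\cM)\le\delta(\cM)$. Hence $\overline\cL=\cL$, so $\cL\to\cM$ is already a subbundle, and $\delta(\cL,\cM)=\delta(\cL,\cM')-d_x=\delta(\cM)$ shows it is maximal.

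For the inductive step, write $D=D'+x$ with $D'$ of length $n-1$. Part~(i) follows from the triangle inequality: if $v''$ is an intermediate $\Phi_x$-neighbour, then $\norm{\delta(v')-\delta(v)}\le\norm{\delta(v')-\delta(v'')}+\norm{\delta(v'')-\delta(v)}\le\deg D'+d_x=\deg D$ by the inductive hypothesis and the base case. For part~(ii), given $\cL\to\cM$ maximal, the base case supplies a unique $\Phi_x$-neighbour $[\cM_1]$ with $\cL\to\cM_1$ maximal and $\delta(\cM_1)=\delta(\cM)+d_x$, and the inductive hypothesis applied to $\cL\to\cM_1$ and $D'$ then supplies a unique $\Phi_{D'}$-neighbour $[\cM']$ of $[\cM_1]$ with $\cL\to\cM'$ maximal and $\delta(\cM')=\delta(\cM)+\deg D$. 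For uniqueness of $[\cM']$ as a function of $\cL\to\cM$: part~(i) forces any intermediate vertex $[\cM_1]$ to satisfy $\delta(\cM_1)=\delta(\cM)+d_x$, after which the base case pins down $[\cM_1]$ and induction pins down $[\cM']$. The converse direction factors dually: first extend $\cL\to\cM'$ through the $\Phi_{D'}$-step by induction, then through the $\Phi_x$-step by the base case.

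The main obstacle is the base-case converse: one has to verify that the composite $\cL\to\cM'\hookrightarrow\cM$ is already a saturated line subbundle of $\cM$, not merely a locally free subsheaf. This is precisely where maximality of $\cL\to\cM'$ enters, through the bound $\delta(\overline\cL,\cM)\le\delta(\cM)$ on the saturation that rules out the case $\overline\cL/\cL=\cK_x$. Once this geometric point is settled, the inductive assembly of the general statement is a matter of bookkeeping.
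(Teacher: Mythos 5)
Your proof is correct and follows essentially the same route as the paper's: induction on the number of prime factors of $D$, with the base case supplied by Lemma \ref{degree_of_nb} and Lemma \ref{assoc_sequence} and the inductive step by the composition formula of paragraph \ref{algebra_strucure_in_graphs} together with the triangle inequality. The only point of divergence is the converse half of the base case, where you give a direct, self-contained saturation argument (the quotient $\overline\cL/\cL$ sits inside $\cK_x$ and the case $\overline\cL/\cL=\cK_x$ is excluded by the bound $\delta(\overline\cL,\cM)\leq\delta(\cM)$) in place of the paper's appeal to Theorem \ref{edges_of_cusps}; since that theorem only treats the decomposable vertices $c_D$, your version is actually the more complete one.
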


\begin{proof}
 We do induction on the number of factors in $\Phi_D=\Phi_{x_1}\dotsm\Phi_{x_n}$ with $x_1,\ldots,x_n$ being prime divisors. Put $x=x_n$.
 
 If $n=1$, then $\Phi_D=\Phi_x$. Assertion \eqref{nb1} follows from Proposition \ref{degree_of_nb} and assertion \eqref{nb2} follows from Lemma \ref{assoc_sequence} and Theorem \ref{edges_of_cusps}.
 
 If $n>1$, we can write $\Phi_D=\Phi_{D'}\Phi_x$ for the effective divisor $D'=x_1+\dotsb+x_{n-1}$, which is of positive degree $\deg D'=\deg D-\deg x$. Assume that \eqref{nb1} and \eqref{nb2} hold for $D'$. We prove \eqref{nb1}. Let $v'$ be a $\Phi_D$-neighbour of $v$. As explained in paragraph \ref{algebra_strucure_in_graphs}, there is a $v''$ that is a $\Phi_{D'}$-neighbour of $v$ and a $\Phi_x$-neighbour of $v'$, thus the inductive hypothesis and Proposition \ref{degree_of_nb} imply
 $$ \norm{\delta(v')-\delta(v)} \ \leq \ \norm{\delta(v')-\delta(v'')}+\norm{\delta(v'')-\delta(v)}\ \leq\ \deg D'+\deg x\ =\ \deg D \;. $$
 
 We prove \eqref{nb2}. By the inductive hypothesis, the $\Phi_{D'}$-neighbours $[\cM']$ of $[\cM]$ with $\delta(\cM')-\delta(\cM)=\deg D'$ correspond to the maximal subbundles $\cL'\to\cM$, which lift to maximal subbundles $\cL'\to\cM'$. On the other hand, every maximal subbundle $\cL\to\cM'$ of a $\Phi_{D'}$-neighbours $[\cM']$ of $[\cM]$ with $\delta(\cM')-\delta(\cM)=\deg D'$ is of this form since 
 $$ \delta(\cM) \quad = \quad \delta(\cM') - \deg D' \quad = \quad  \delta(\cL,\cM') - \deg D' \quad = \quad \delta(\cL,\cM), $$
 thus $\cL\to\cM$ must be a maximal subbundle. We now apply Lemma \ref{assoc_sequence} to each of the $\Phi_{D'}$-neighbors $\cM'$ of $\cM$ and obtain the first statement of \eqref{nb2}. The second statement of \eqref{nb2} follows from Theorem \ref{edges_of_cusps}.
\end{proof}

\begin{pg}
 We demonstrate how to use the lemma for to show that the space $\cV_0$ of all unramified functions on $G_\AA$ that satisfy the cuspidal condition is finite-dimensional. Namely, let $N\subset G$ be a unipotent subgroup, then the cuspidal condition for $f\in C^0(G_\AA)^K$ is that 
 $$ \int\limits_{\lquot{N_F}{N_\AA}} \Phi(f)(n)\;dn \quad = \quad 0 $$
 for all $\Phi\in\cH_K$. If $f$ is an automorphic form, then this condition defines a cusp form. A posteriori it will be clear that $\cV_0$ contains only automorphic forms and thus equals the space $\cA_0^K$ of unramified cusp forms.
\end{pg}

\begin{thm}
 The dimension of $\cV_0$ is finite and bounded by
 $$ \dim\cV_0 \ \leq \ \#\{[\cM]\in\PBun_2 X | \delta(\cM)\leq m_X \}. $$
 \end{thm}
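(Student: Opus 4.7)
The plan is to show every $f \in \cV_0$ vanishes on all vertices outside the set $V := \{[\cM] \in \PBun_2 X : \delta(\cM) \leq m_X\}$. Granted this, the restriction map $\cV_0 \to \CC^V$ is injective, and since $V$ is finite---by Corollary \ref{finite_indec_part}, $\PBunindec X$ is finite, and the decomposable vertices $c_D$ in $V$ are parametrised by divisor classes $[D] \in \Cl X$ with $|\deg D| \leq m_X$, a finite set because $\Cl^0 X$ is finite and combining this with Proposition \ref{char_PBundec}---we obtain $\dim \cV_0 \leq |V|$.

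I first reduce to a statement about decomposable vertices. Any vertex $v$ with $\delta(v) > m_X$ satisfies $\delta(v) > 2g_X - 2$, so by Proposition \ref{range_delta} it is decomposable, hence equal to $c_D$ for some divisor class by Proposition \ref{char_PBundec}. Since $c_D = c_{-D}$ in the same proposition and $\delta(c_D) = |\deg D|$ by Proposition \ref{number_maximal_subbundles}, we may assume $\deg D > 0$, in which case $\deg D > m_X \geq 2g_X - 2$. It therefore suffices to prove $f(c_D) = 0$ for every divisor $D$ with $\deg D > 2g_X - 2$.

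The key observation is that for such $D$ the value $f(c_D)$ can be read off from the constant term along $N = \tinymat 1 * 0 1$ of $f$. Write $f_N(g) := \int_{\lquot{F}{\AA}} f(\tinymat 1 a 0 1 g) \, da$, which is well-defined and left $N_\AA G_F$-invariant using $G_F$-invariance of $f$. For $g_D := \tinymat {\pi_D} 0 0 1$ with $\pi_D \in \AA^\times$ an idele of divisor $D$ and any $a \in \AA$, the rank $2$ bundle $\cM_{n(a) g_D}$ attached via paragraph \ref{bundle_to_GL_n} fits into a short exact sequence
\[
0 \to \cL_D \to \cM_{n(a) g_D} \to \cO_X \to 0,
\]
whose extension class lies in $\Ext^1(\cO_X, \cL_D) \simeq H^1(X, \cL_D)$; the latter vanishes for $\deg D > 2g_X - 2$ by Serre duality and Riemann--Roch. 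Hence the sequence splits, $[\cM_{n(a) g_D}] = c_D$ independently of $a$, and
\[
f_N(g_D) \ = \ \vol(\lquot{F}{\AA}) \cdot f(c_D).
\]

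It remains to show that the cuspidal condition forces $f_N(g_D) = 0$. By Fubini, $\Phi(f)_N(e) = \Phi(f_N)(e)$ for every $\Phi \in \cH_K$, so the cuspidal condition rewrites as $\Phi(f_N)(e) = 0$ for all $\Phi \in \cH_K$. Varying $\Phi$ over characteristic functions $\mathbf{1}_{K g K}$ of $K$-double cosets yields an infinite family of linear relations among the values $f_N(h)$; the task is to combine these relations to isolate $f_N(g_D)$. I would approach this via the explicit description of $K g_D K$ as a disjoint union of $K$-cosets along the lines of paragraph \ref{def_xi_w}, together with the Iwasawa decomposition of each representative, which relates the relevant values of $f_N$ to values of the induced function $F : \Cl F \to \CC$ at classes differing from $[D]$ in a controlled way. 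This step---solving an infinite system of linear relations on $\Cl F$ using that $\cH_K = \CC[\Phi_x]_{x \in |X|}$ is commutative and that the $K$-coset structure of each $K \tinymat {\pi_x} 0 0 1 K$ is known---is the main technical obstacle, and it is what forces $F \equiv 0$ and thus completes the proof.
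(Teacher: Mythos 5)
Your reduction to showing $f(c_D)=0$ for $\deg D>2g_X-2$, and the identity $f_N(g_D)=\vol(\lquot{F}{\AA})\cdot f(c_D)$ for such $D$ via $\Ext^1(\cO_X,\cL_D)=H^1(X,\cL_D)=0$, are both fine. But the proof stops exactly where the work begins, and the step you defer is not a technicality: as you have set it up, it cannot succeed. Write $F$ for the function induced by $f_N$ on $\Cl F$. The cuspidal condition for $\Phi$ the characteristic function of $Kg_DK$, $D$ effective, is a relation $\sum_{0\leq E\leq D}c_{D,E}\,F([2E-D])=0$, where $c_{D,E}$ counts cosets. These coefficients satisfy the symmetry $c_{D,E}=q^{\deg(2E-D)}\,c_{D,D-E}$, and pairing the terms for $E$ and $D-E$ shows that \emph{every} such relation is satisfied identically by \emph{every} $F$ obeying $q^{\deg D'}F([D'])+F([-D'])=0$ for all $[D']$ --- an infinite-dimensional family (one free parameter per pair $\{[D'],[-D']\}$). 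For instance, on $\PP^1$ the relations collapse to $F(0)=0$ and $q^{n}F(n)+F(-n)=0$, leaving each $F(n)$, $n\geq1$, free. So no amount of linear algebra on the abstract system over $\Cl F$ will force $f_N(g_D)=0$. The missing input is that $F([D'])$ for $\deg D'\leq 2g_X-2$ is not an independent unknown but a weighted sum of values of $f$ over the finitely many classes in $\Ext^1(\cO_X,\cL_{D'})$, i.e.\ over several vertices of $\cG_x$; only this geometric constraint breaks the symmetry above.

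That constraint is exactly what the paper's proof uses, and it also proves less than you aim for: instead of showing $f$ vanishes outside $V=\{[\cM]\mid\delta(\cM)\leq m_X\}$ (true a posteriori, but stronger than needed), it shows by induction on $d=\delta(c_D)>m_X$ that $f(c_D)$ is \emph{determined} by the values of $f$ at vertices $v'$ with $\delta(v')<d$, which already gives injectivity of the restriction $\cV_0\to\CC^{V}$. Concretely, the geometric cuspidal condition $\sum_{\cM\in\Ext^1(\cO_X,\cO_X)}\Phi_D(f)([\cM])=0$ together with Lemma \ref{neighbours_of_c_0} and Proposition \ref{number_maximal_subbundles} shows that for $\deg D>m_X$ the only $\Phi_D$-neighbour $v'$ of these classes with $\delta(v')=\deg D$ is $c_D$, occurring with total multiplicity $q+q^{e_1}$ where $e_1=\dim\Ext^1(\cO_X,\cO_X)$, while all other neighbours have smaller $\delta$. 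You would need either to adopt this induction or to supply a genuinely new argument for the vanishing of the constant term at large degree; as written, the proof is incomplete at its decisive step.
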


\begin{proof}
 Note that there are only finitely many projective line bundles $[\cM]$ with $\delta(\cM)\leq m_X$ since $\PBunindec X$ is finite and $\PBundec X$ has only finitely many classes $[\cM]$ with $\delta(\cM)\leq m_X$. So the finite-dimensionality of $\cV_0$ will follow from the inequality.
 
 We proceed with the proof of the inequality. The geometric equivalent of the cuspidal condition is that
 $$ \sum_{\cM\in\Ext^1(\cO_X,\cO_X)} \Phi(f)(\cM) \quad = \quad 0 $$
 for all $\Phi\in\cH_K$ (cf.\ \cite{Gaitsgory}).
 
 Since $\delta(\cO_X,\cM)=0$ for $\cM\in\Ext^1(\cO_X,\cO_X)$, we have that $\cO_X\to\cM$ is a maximal subbundle by Proposition \ref{number_maximal_subbundles}\eqref{max2}, and only in the case of the trivial extension $\cM\simeq\cO_X\oplus\cO_X$, there are other maximal subbundles, namely, there exist $(q+1)$ different subbundles of the form $\cO_X\to\cM$. Note that in any case $\delta(\cM)=0$.

 Let $D$ be a nontrivial effective divisor. In case $\cM$ is the trivial extension $\cO_X\oplus\cO_X$, the vertex $c_0=[\cM]$ has the unique $\Phi_D$-neighbour $v'=c_D$ with $\delta(v')=\deg D$ which is of multiplicity $q+1$. In case, $\cM$ is a non-trivial extension of $\cO_X$ by itself, the vertex $v=[\cM]$ has a unique $\Phi_D$-neighbour $v'=[\cM']$ with $\delta(v')-\delta(v)=\deg D$, which has a unique maximal subbundle, namely, $\cO_X\to\cM'$.
 
 Thus for every $\cM\in\Ext^1(\cO_X,\cO_X)$ and every $\Phi_D$-neighbor $[\cM']$ of $[\cM]$ with $\delta(\cM')=\deg D$, the maximal subbundles of $\cM'$ are of the form $\cO_X\to\cM'$. Thus if $\deg D > m_X$, then $[\cM']=c_D$ by Proposition \ref{range_delta}.

 We finish the proof of the theorem by showing that every $f\in\cV_0$ is determined by its values in the vertices $v$ with $\delta(v)\leq m_X$. We make an induction on $d=\delta(c_D)$, where $c_D$ varies through all vertices $v$ with $\delta(v)>m_X$.

 Let $d>m_X$. Assume that the values of $f$ in all vertices $v$ with $\delta(v)<d$ are given (which is the case when $d=m_X+1$; thus the initial step). Let $v$ be a vertex with $\delta(v)=d$, then $v=c_D$ for an effective divisor $D$ by Lemma \ref{many_effective_divisors} since $m_X=\max\{0,2g_X-2\}\geq g_X-1$. For the Hecke operator $\Phi_D$, the cuspidal condition reads by the previous argumentation and Lemma \ref{neighbours_of_c_0} as
 $$ (q + q^{e_1}) \cdot f(c_D) + \sum_{\delta(v')<d} a_{v'} f(v') \quad = \quad 0 $$
 for certain $a_{v'}$ and $e_1=\dim\Ext^1(\cO_X,\cO_X)$. Thus $f(v)$ is determined by the values $f(v')$ in vertices $v'$ with $\delta(v')<d$, which proves the theorem.
\end{proof}

\begin{pg}
 While the finite-dimensionality of $\cV_0$ can also be established without the techniques of this paper, we do not know any other method to prove corresponding fact for toroidal functions. For more details on the following definitions, see \cite{Lorscheid1}.

 Choose a basis of $\FF_{q^2}$ over $\FF_q$. This defines an embedding of $E=\FF_{q^2}F$ into the algebra of $2\times 2$-matrices with entries in $F$. The image of $E^\times$ is contained in $\GL_2(F)$ and defines a non-split torus $T'$ of $\GL_2$. The image of $T'$ in $G=\rquot{\GL_2}{Z}$ defines a non-split torus $T$ of $G$. 

 A function $f\in C^0(G_\AA)^K$ is \emph{$E$-toroidal} if for all $\Phi\in\cH_K$,
 $$ \int\limits_{\lquot{T_F}{T_\AA}} f(t)\; dt \quad = \quad 0. $$
 We denote the space of all $E$-toroidal functions $f\in C^0(G_\AA)^K$ by $\cV_\tor$. Note that in \cite{Lorscheid1} one finds a toroidal condition, which is stronger than $E$-toroidality. Namely, $f$ has to be $E'$-toroidal for all separable quadratic algebra extensions $E'$ of $F$. We forgo to recall complete definitions, but remark that the finite-dimensionality of the space of all toroidal $f\in C^0(G_\AA)^K$ follows since it is a subspace of $\cV_\tor$.

 Let $p:X'\to X$ be the map of curves that corresponds to the field extension $E/F$, and let $e=\tinymat 1 {} {} 1 $.
\end{pg}

\begin{thm}
 \label{geometric_toroidal_cond}
 Let $c_T=\vol(\lquot{T_F}{T_\AA})\,/\,\#\bigl(\rquot{\Pic X'}{p^\ast(\Pic X)}\bigr)$. Then for all $f\in C^0(G_\AA)^K$,
 $$ f_T(e) \ \ \ = \ \ \ c_T \ \ \ \cdot\hspace{-0,7cm}\sum_{[\cL]\in\rquot{\Pic X'}{p^\ast(\Pic X)}}\hspace{-0,6cm}f([p_\ast\cL]) \;. $$
\end{thm}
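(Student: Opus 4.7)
The plan is to unfold the toroidal integral $f_T(e)=\int_{\lquot{T_F}{T_\AA}}f(t)\,dt$ via the $K$-invariance of $f$, reducing it to a finite sum indexed by the double cosets $\lrquot{T_F}{T_\AA}{T_{\cO_\AA}}$, which I would then identify with $\rquot{\Pic X'}{p^\ast\Pic X}$. A clean volume computation will produce the constant $c_T$.

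First I would set up the double coset decomposition. The chosen basis of $\FF_{q^2}/\FF_q$ yields an embedding $\iota:E\hookrightarrow M_2(F)$, hence $\iota:\AA_E^\times\hookrightarrow\GL_2(\AA)$ that restricts to $\cO_{\AA_E}^\times\hookrightarrow\GL_2(\cO_\AA)$. Passing to $G=\PGL_2$, the torus $T_\AA=\AA_E^\times/\AA^\times$ acquires a compact open subgroup $T_{\cO_\AA}\subset K$, namely the image of $\cO_{\AA_E}^\times$. The adelic description of line bundles yields the chain of bijections
$$\lrquot{T_F}{T_\AA}{T_{\cO_\AA}} \ \simeq \ \lrquot{E^\times}{\AA_E^\times}{\cO_{\AA_E}^\times\cdot\AA^\times} \ \simeq \ \rquot{\Pic X'}{p^\ast\Pic X},$$
and this is a finite set.

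Next, for a representative $\tilde a\in\AA_E^\times$ of a class $[\cL]\in\rquot{\Pic X'}{p^\ast\Pic X}$, I would identify the vertex corresponding to $\iota(\tilde a)$ in $\PBun_2X\simeq\lrquot{G_F}{G_\AA}{K}$ with $[p_\ast\cL]$. Locally at $x\in\norm X$, one has $(p_\ast\cL)_x=\bigoplus_{p(y)=x}\cL_y$, which under the fixed basis is precisely the $\cO_{X,x}$-module obtained from $\cO_{X,x}\otimes_{\FF_q}\FF_{q^2}$ equipped with multiplication by $\tilde a_x$, i.e.\ the sublattice $\iota(\tilde a_x)^{-1}\cO_{X,x}^2\subset F^2$ of paragraph \ref{GL_n-Bun_n}. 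Hence $[\iota(\tilde a)]=[p_\ast\cL]$ in $\PBun_2X$, and by the $K$-invariance and left $G_F$-invariance of $f$ we obtain $f(t)=f([p_\ast\cL])$ for every $t\in T_\AA$ representing $[\cL]$.

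Finally, I would carry out the volume computation. Since $T$ is abelian and $T_F\cap T_{\cO_\AA}$ is finite, every orbit $T_F t T_{\cO_\AA}$ in $\lquot{T_F}{T_\AA}$ has the same measure $V$. Summing over the finitely many orbits gives $\vol(\lquot{T_F}{T_\AA})=V\cdot\#(\lrquot{T_F}{T_\AA}{T_{\cO_\AA}})$, so $V=c_T$. Breaking the integral along the orbits and using that $f$ is constant on each then yields
$$f_T(e) \ = \ \sum_{[t]\in\lrquot{T_F}{T_\AA}{T_{\cO_\AA}}}V\cdot f(t) \ = \ c_T\!\!\sum_{[\cL]\in\rquot{\Pic X'}{p^\ast\Pic X}}\!\!f([p_\ast\cL]).$$
The hard part is the vertex identification in the third paragraph: one has to verify carefully that the lattice $\iota(\tilde a)\cO_{\AA}^2\subset\AA^2$ reproduces $p_\ast\cL$ exactly, without a twist by the different of $E/F$. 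This is a local check at each place, but it requires pinning down the compatibility between the fixed basis of $\FF_{q^2}/\FF_q$ and the local coordinates implicit in the identification $\cO_{X,x}\otimes_{\FF_q}\FF_{q^2}\simeq\bigoplus_{p(y)=x}\cO_{X',y}$.
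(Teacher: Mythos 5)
Your proposal is correct and follows essentially the same route as the paper: identify $\lrquot{T_F}{T_\AA}{T_{\cO_\AA}}$ with $\rquot{\Pic X'}{p^\ast\Pic X}$, match the vertex of $\iota(\tilde a)$ with $[p_\ast\cL]$ via the adelic description of bundles, and use right $K$-invariance to collapse the integral to a sum with a volume constant (the paper pins down $c_T$ by plugging in a constant function, which is equivalent to your equal-orbit-measure argument since $T$ is abelian). The "hard part" you flag resolves exactly as the paper does: since $E=\FF_{q^2}F$ is everywhere unramified, the chosen basis of $\FF_{q^2}/\FF_q$ is an $\cO_x$-basis of $\cO_{E_x}$ at every place, so $\Theta_E^{-1}(\GL_2(\cO_{\AA_F}))=\cO_{\AA_E}^\times$ and no twist by the different appears.
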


\begin{proof}
 Let $\AA_E$ be the adeles of $E$. To avoid confusion, we write $\AA_F$ for $\AA$. We introduce the following notation. For an $x\in\norm X$ that is inert in $E/F$, we define $\cO_{E,x}:=\cO_{E,y}$, where $y$ is the unique place that lies over $x$. For an $x\in\norm X$ that is split in $E/F$, we define $\cO_{E,x}:=\cO_{E,y_1}\oplus\cO_{E,y_2}$, where $y_1$ and $y_2$ are the two places that lie over $x$. Note that there is no place that ramifies. Let $\cO_{E_x}$ denote the completion of $\cO_{E,x}$. Then $\cO_{E_x}$ is a free module of rank $2$ over $\cO_{F_x}=\cO_x$ for every $x\in\norm X$.
 
 Let $\Theta_E:\AA_E^\times\to\GL_2(\AA_F)$ be the base extension of the embedding $E^\times\to\GL_2(F)$ that defines $T'$, which corresponds to the chosen basis of $E$ over $F$ that is contained in $\FF_{q^2}$. This basis is also a basis of $\cO_{E_x}$ over $\cO_{F_x}$ for every $x\in\norm X$. This shows that $\Theta_E^{-1}(\GL_2(\cO_{\AA_F}))=\cO_{\AA_E}^\times$ and that the diagram
 $$ \xymatrix{ \lrquot{E^\times}{\AA_E^\times}{\cO_{\AA_E}^\times}\ar[rr]^(.6){1:1}\ar[d]^{\Theta_E} && \Pic X'\ar[d]^{p_\ast}\\
	 \lrquot{\GL_2(F)}{\GL_2({\AA_F})}{\GL_2(\cO_{\AA_F})}\ar[rr]^(.6){1:1} && \Bun_2X} $$
 commutes, where the horizontal arrows are the bijections as described in paragraph \ref{bundle_to_GL_n}.

 The action of $\AA_F$ on $\lrquot{E^\times}{\AA_E^\times}{\cO_{\AA_E}^\times}$ and $\lrquot{\GL_2(F)}{\GL_2({\AA_F})}{\GL_2(\cO_{\AA_F})}$ by scalar multiplication is compatible with the action of $\Pic X$ on $\Pic X'$ and $\Bun_2X$ by tensoring in the sense that all maps in the above diagram are equivariant if we identify $\Pic X$ with $\lrquot{F^\times}{\AA_F^\times}{\cO_{\AA_F}^\times}$. Taking orbits under these compatible actions yields the commutative diagram
 $$ \xymatrix{\lrquot{E^\times\AA_F^\times}{\AA_E^\times}{\cO_{\AA_E}^\times}\ar[rr]^(.55){1:1}\ar[d]^{\Theta_E} &&\rquot{\Pic X'}{p^\ast\Pic X}\ar[d]^{p_\ast}\\
	 \lrquot{G_F}{G_{\AA_F}}{K}\ar[rr]^(.55){1:1} && {\PBun}_2X \;.\hspace{-3pt}} $$

 Since $f$ is right $K$-invariant, we may take the quotient of the domain of integration by $T_{\AA_F}\cap K$ from the right, which is the image of $\cO_{\AA_E}^\times$ in $G_{\AA_F}$. We obtain the assertion of the theorem for some still undetermined value of $c$. The value of $c$ is computed by plugging in a constant function for $f$.
\end{proof}

\begin{thm}
 \label{thmA}
 The dimension of the space of unramified toroidal functions is finite, bounded by
 $$ \dim \cV_\tor \ \leq \ \#\,\bigl(\,\PBun_2 X - \{c_D\}_{[D]\in\ECl X} \bigr) \;. $$
\end{thm}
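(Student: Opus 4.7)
The plan is to parallel the proof for $\cV_0$, replacing the cuspidal relation by its toroidal counterpart from Theorem \ref{geometric_toroidal_cond}. Set $\cW := \PBun_2 X - \{c_D\}_{[D]\in\ECl X}$; it suffices to show that the restriction map $\cV_\tor \to \CC^\cW$ is injective, i.e.\ that any $f \in \cV_\tor$ vanishing on $\cW$ must vanish identically. For any $\Phi \in \cH_K$, applying Theorem \ref{geometric_toroidal_cond} to $\Phi(f)$ yields the identity
\[ 0 \ = \ \sum_{[\cL] \in \rquot{\Pic X'}{p^\ast \Pic X}} \Phi(f)\bigl([p_\ast\cL]\bigr); \]
by Proposition \ref{delta_traces} and paragraph \ref{labelling_vertices}, the vertex $[p_\ast \cL]$ equals $c_0$ precisely when $\cL \in p^\ast \Pic X$, while all other cosets give vertices in $\PBuntr X$ of $\delta \leq -2$, which all lie in $\cW$. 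I will prove $f(c_D) = 0$ for every $[D] \in \ECl X$ by induction on $\deg D \geq 0$. The base case $D = 0$ uses $\Phi = 1$: the identity collapses to $f(c_0) = -\sum_{[\cL] \neq [\cO_{X'}]} f([p_\ast \cL]) = 0$ by the vanishing hypothesis on $\cW$.

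For the inductive step, fix $[D] \in \ECl X$ with $\deg D > 0$, pick an effective representative $D$, and apply the identity with $\Phi = \Phi_D$. For every nontrivial coset $[\cL]$, Lemma \ref{neighbours_of_c_0}\eqref{nb1} bounds each $\Phi_D$-neighbour $v'$ of $[p_\ast \cL]$ by $\delta(v') \leq \deg D - 2 < \deg D$; such $v'$ is either indecomposable (hence in $\cW$) or of the form $c_{D'}$ with $|\deg D'| < \deg D$, and in either case $f(v') = 0$ by the inductive hypothesis or by $v' \in \cW$. For the remaining summand $\Phi_D(f)(c_0)$, Lemma \ref{neighbours_of_c_0}\eqref{nb2} identifies the $\Phi_D$-neighbours of $c_0$ at maximal $\delta = \deg D$ with the lifts of the $q+1$ maximal subbundles $\cO_X \hookrightarrow \cO_X \oplus \cO_X$. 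The trivial lift, obtained by placing the cokernel torsion sheaf entirely in the factor complementary to the chosen maximal subbundle at every prime step of $\Phi_D = \prod_i \Phi_{x_i}^{n_i}$, produces $\cM' \simeq \cO_X \oplus \cL_{-D}$ and hence $[\cM'] = c_D$. Thus $c_D$ appears with a strictly positive multiplicity $N_D \geq 1$, while every other max-$\delta$ lift is either $c_D$ again or an indecomposable vertex in $\cW$, and the lower-$\delta$ neighbours of $c_0$ fall into the two categories already handled above. The toroidal identity therefore collapses to $N_D \cdot f(c_D) = 0$, closing the induction.

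The main obstacle is the uniform verification that $N_D \geq 1$, most delicately when $\deg D \leq m_X$: for such $D$, Proposition \ref{range_delta} no longer forces max-$\delta$ lifts of the maximal subbundles of $c_0$ to be decomposable, so a priori several lifts could be indecomposable. The explicit trivial lift exhibited above circumvents this obstacle by producing a decomposable max-$\delta$ lift equal to $c_D$ irrespective of the genus of $X$ or the size of $\deg D$, which secures the strict positivity of $N_D$ that drives the inductive argument.
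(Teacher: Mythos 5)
Your proof is correct and follows essentially the same route as the paper's: the same induction on $\deg D$, the same application of $\Phi_D$ to the toroidal relation of Theorem \ref{geometric_toroidal_cond}, the same base case with $\Phi=1$ and Proposition \ref{delta_traces}, and the same appeal to Lemma \ref{neighbours_of_c_0}. The only (harmless) deviation is at the top-$\delta$ neighbours of $c_0$: the paper asserts that $c_D$ is the \emph{unique} $\Phi_D$-neighbour of $c_0$ with $\delta=\deg D$ and total weight $q+1$, whereas you only exhibit one explicit decomposable lift to secure $N_D\geq 1$ and absorb any hypothetical indecomposable top-$\delta$ neighbours into the vanishing on $\PBun_2X-\{c_D\}_{[D]\in\ECl X}$ --- a slightly more defensive version of the same step.
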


\begin{proof}
 First remark that given the inequality in the theorem, finite-dimensionality follows since the right hand set is finite. Indeed, by Lemma \ref{many_effective_divisors},
 $$ \PBun_2 X - \{c_D\}_{[D]\in\ECl X} \ \subset \ \bigl\{v \in \PBun_2 X \mid \delta(v)\leq m_X \bigr\} $$
 since $m_X\geq g_X-1$, and the latter set is finite.

 We now proceed with the proof of the inequality. Let $f\in\cV_\tor$. We will show by induction on $d=\deg D$ that the value of $f$ at a vertex $c_D$ with $[D]\in\ECl X$ is uniquely determined by the values of $f$ at the elements of $\PBun_2 X - \{c_D\}_{[D]\in\ECl X}$. This will prove the theorem.

 By Theorem \ref{geometric_toroidal_cond}, the condition for $f$ to lie in $\cV_\tor$ reads
 $$ \sum_{[\cL]\in(\Pic X' \,/\, p^\ast\Pic X)}\hspace{-0,7cm} \Phi(f)([p_\ast\cL]) \ = \ 0\;,\quad\text{for all }\Phi\in\cH. $$
 
 If $d=0$, take $\Phi$ as the identity element in $\cH_K$. We know from Proposition \ref{delta_traces} that $p_\ast(\Pic X' \,/\, p^\ast\Pic X)=\PBuntr X \cup \{c_0\}$, so $f(c_0)$ equals a linear combination of values of $f$ at vertices $v$ in $\PBuntr X$, which all satisfy $\delta(v)<0$. Since the zero divisor class is the only class in $\ECl X$ of degree $0$, we have proven the case $d=0$.
 
 Next, let $D$ be an effective divisor of degree $d>0$ and put $\Phi=\Phi_D$. If $v$ is a $\Phi_D$-neighbour of $w$, then $\delta(v)$ and $\delta(w)$ can differ at most by $d$ (Lemma \ref{neighbours_of_c_0} \eqref{nb1}). Therefore all $\Phi_D$-neighbours $v$ of vertices in $\PBuntr X$ have $\delta(v)<d$. The vertex $c_D$ is the only $\Phi_D$-neighbour $v$ of $c_0$ with $\delta(v)=d$ (Lemma \ref{neighbours_of_c_0} \eqref{nb2}). Thus
 $$ 0 \quad  = \hspace{-0,4cm} \sum_{\cL\in(\Pic X' \,/\, p^\ast\Pic X)}\hspace{-0,7cm} \Phi_D(f)([p_\ast\cL]) \quad = \quad (q+1)\,f(c_D)\quad+\hspace{-0,6cm} \sum_{\begin{subarray}{c}\vspace{0,1cm}\cL\in(\Pic X' \,/\, p^\ast\Pic X),\\\vspace{0,1cm}([p_\ast\cL],v,\lambda)\in\cU_D([p_\ast\cL]),\\\delta(v)<d\end{subarray}} \hspace{-0,9cm} \lambda\ f(v) $$
 determines $f(c_D)$ as the linear combination of values of $f$ at vertices $v$ with $\delta(v)<d$. By the inductive hypothesis, $f(c_D)$ is already determined by the values of $f$ at vertices that are not contained in $\{c_D\}_{[D]\in\ECl X}$.
\end{proof}

\begin{eg}
 If $X$ is the projective line over $\FF_q$, then all vertices $v$ are of the form $c_D$ for some effective divisor $D$ (see Example \ref{eg_P1}). Thus $\cV_\tor$ is trivial. Since only $v=c_0$ satisfies $\delta(v)\leq m_X$, all values of $f\in\cV_0$ are multiples of $f(c_0)$. However, $\Ext^1(\cO_X,\cO_X)$ is trivial, thus the cuspidal condition (applied to the trivial Hecke operator) is $f(c_0)=0$. Thus also $\cV_0$ is trivial. See \cite{Lorscheid3} for the corresponding spaces in the case of an elliptic curve.
\end{eg}


\begin{appendix}
\section{Examples for rational function fields}
\label{appendix_examples}

\noindent
The appendix contains examples of graphs of Hecke operators for a rational function field, which can be calculated by elementary matrix manipulations. We do not exercise all calculations, but hint on how to do them. The reader will find examples for elliptic function fields that are determined by geometric methods in \cite{Lorscheid3}.

Let $F$ be $\FF_q(T)$, the function field of the projective line over $\FF_q$, which has $q+1$ $\FF_q$-rational points and trivial class group. Fix a place $x$ of degree $1$. 
 
\begin{pg} 
 Using strong approximation for $\SL_2$ (cf.\ Proposition \ref{strong_approx+comp}, where $J$ is trivial in this case), we see that the map obtained by adding the identity matrix $e$ at all places $y\neq x$,
 $$\begin{array}{ccc}
  \lrquot{\Gamma}{G_x}{K_x} & \longrightarrow & \lrquot{G_F}{G_\AA}{K} \;, \\
                 {[g_x]}       & \longmapsto     &             {[(g_x,e)]}
 \end{array}$$
 is a bijection. 
 
 We introduce some notation. Elements of $\cO^x_F=\bigcap_{y\neq x}(\cO_y\cap F)$ can be written in the form $\sum_{i=m}^0 b_i\pi_x^i$ with $b_i\in\FF_q$ for $i=m,\dotsc,0$ for some integer $m\leq0$. Let $\tilde K_x=\GL_2(\cO_x)$, where we view $\cO_x$ as the collection of all power series $\sum_{i\geq0} b_i\pi_x^i$ with $b_i\in\FF_q$ for $i\geq0$. Let $\Gamma=\GL_2(\cO^x_F)$ and let $Z$ be the center of $\GL_2$.
 \end{pg}

\begin{pg}
 \label{tricks}
 For better readability, we write $\pi$ for the uniformizer $\pi_x$ at $x$ and $g$ for a matrix in $G_x$. We say $g\sim g'$ if they represent the same class $[g]=[g']$ in $\lrquot{\Gamma}{G_x}{K_x}$, and indicate by subscripts to `$\sim$' how to alter one representative to another. The following changes of the representative $g$ of a class $[g]\in\lrquot{\Gamma}{G_x}{K_x}$ provide an algorithm to determine a standard representative for the class of any matrix $g\in G_x$:
 
\begin{enumerate}
 \item\label{trick1} By the Iwasawa decomposition, every class in $\lrquot{\Gamma}{G_x}{K_x}$ is represented by an upper triangular matrix, and 
                     $$ \smallmat a b {} d \ \ \ \putunder{\sim}{/\,Z_x} \ \ \ \smallmat a b {} d \ \ \ \smallmat {d^{-1}} {} {} {d^{-1}} \ \ \ = \ \ \ \smallmat a/d b/d {} 1 \;. $$
 \item\label{trick2} Write $a/d=r\pi^n$ for some integer $n$ and $r\in\cO_x^\times$, then with $b'=b/d$, we have
                     $$ \smallmat {r\pi^n} b' {} 1 \ \ \ \putunder{\sim}{/\,\tilde K_x} \ \ \ \smallmat {r\pi^n} b' {} 1 \ \ \ \smallmat {r^{-1}} {} {} 1 \ \ \ = \ \ \ \smallmat {\pi^n} b' {} 1 \;. $$
 \item\label{trick3} If $b'=\sum_{i\geq m} b_i \pi^i$ for some integer $m$ and coefficients $b_i\in\FF_q$ for $i\geq m$, then
  \begin{eqnarray*} 
    \smallmat {\pi^n} {\sum_{i\geq m} b_i \pi^i} {} 1 & \putunder{\sim}{/\,\tilde K_x} & \smallmat {\pi^n} {\sum_{i\geq m} b_i \pi^i} {} 1 \ \ \ \smallmat 1 {-\pi^{-n}(\sum_{i\geq n}b_i\pi^i)} {} 1 \\ 
     &=& \smallmat {\pi^n} {b_m \pi + \ldots + b_{n-1} \pi^{n-1}} {} 1 \;.
  \end{eqnarray*}
 \item\label{trick4} One can further perform the following step:
\begin{multline*}
    \smallmat {\pi^n} {b_m \pi^m + \ldots + b_{n-1} \pi^{n-1}} {} 1 \\
      \begin{array}{cl} \putunder{\sim}{\Gamma\,\backslash} & \smallmat 1 {-(b_m \pi^m + \ldots + b_0 \pi^0)} {} 1 \ \ \  \smallmat {\pi^n} {b_m \pi^m + \ldots + b_{n-1} \pi^{n-1}} {} 1         \vspace{5pt} \\
                         =& \smallmat {\pi^n} {b_1 \pi + \ldots + b_{n-1} \pi^{n-1}} {} 1 \;.  \end{array}
\end{multline*}
 \item\label{trick5} If $b=b_1 \pi + \ldots + b_{n-1} \pi^{n-1}\neq 0$, then $b=s\pi^k$ with $1\leq k \leq n-1$, $s \in O_x^\times$ and
  \begin{eqnarray*} 
    \smallmat \pi^n s\,\pi^{k} {} 1 \!\!\! & \putunder{\sim}{\Gamma\,\backslash \ /\,Z_x\,\tilde K_x} & \!\!\! \smallmat {} 1 1 {} \ \ \smallmat \pi^n s\,\pi^{k} {} 1 \ \ \smallmat {s^{-1}\pi^{-k}} {} {} {s^{-1}\pi^{-k}} \ \ \smallmat -s^2 {} s\pi^{n-k} 1 \\ 
    &=& \smallmat \pi^{n-2k} s^{-1}\pi^{-k} {} 1 \;.
  \end{eqnarray*} 
 \item\label{trick6} The last trick is
   $$ \smallmat \pi^{n} {} {} 1 \ \ \ \putunder{\sim}{\Gamma\,\backslash \ /\,Z_x\,\tilde K_x} \ \ \ \smallmat {} 1 1 {} \ \ \ \smallmat \pi^{n} {} {} 1 \ \ \ \smallmat \pi^{-n} {} {} \pi^{-n} \ \ \ \smallmat {} 1 1 {} \ \ \ = \ \ \ \smallmat \pi^{-n} {} {} 1 \;. $$
\end{enumerate}

 Executing these steps (possibly \eqref{trick3}--\eqref{trick5} several times) will finally lead to a matrix of the form 
 $$ p_n = \smallmat \pi^{-n} {} {} 1 $$
 for some $n\geq 0$. The matrix $p_n$ represents the vertex $c_{nx}$ in $\Vertex\cG_{\Phi,K}=\{c_{nx}\}_{n\geq0}$ where $\Phi$ is any unramified Hecke operator (cf.\ Example \ref{eg_P1}).Thus we found a way to determine the vertex $c_{nx}$ that is represented by an arbitrary matrix $g\in G_x\subset G_\AA$.
\end{pg} 

\begin{eg}[Graph of $0$ and $1$]
 According to paragraph \ref{algebra_strucure_in_graphs}, the graphs for the zero element $0$ and the identity $1$ in $\cH_K$ are as illustrated in Figures \ref{figure_graph_rat_0} and \ref{figure_graph_rat_1}, respectively.
\end{eg}

\begin{figure}[htb]
 \begin{center}
  \includegraphics{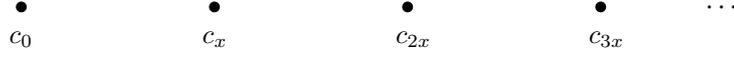}
  \caption{The graph of the zero element in $\cH_K$}
  \label{figure_graph_rat_0}
 \end{center} 
\end{figure}

\begin{figure}[htb]
 \begin{center}
  \includegraphics{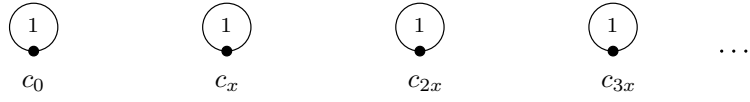}
  \caption{The graph of the identity in $\cH_K$}
  \label{figure_graph_rat_1}
 \end{center} 
\end{figure}

\begin{eg}[Graph of $\Phi_x$]
 \label{graph_rat_degree_1}
  By Proposition \ref{Phi_x_neighbours} the $\Phi_x$-neighbors of $p_i$ are of the form $p_i\xi_w$. With help of the reduction steps \eqref{trick1}--\eqref{trick6} in paragraph \ref{tricks} one can determine easily the standard representative $p_j$ of $p_i\xi_w$. We reobtain the graph of $\Phi_x$ as illustrated in Figure \ref{figure_graph_rat_deg_1} (cf.\ Example \ref{eg_P1}). 
\end{eg}

\begin{figure}[htb]
 \begin{center}
  \includegraphics{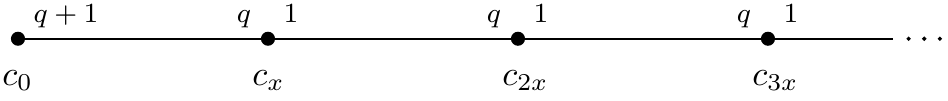}
  \caption{The graph of $\Phi_x$}
  \label{figure_graph_rat_deg_1}
 \end{center} 
\end{figure}

\begin{eg}[Graph of $\Phi_y$ for $y\neq x$]
 \label{graph_of_Phi_y_example}
 If we want to determine the edges of $\cG_y$ for a place $y$ of degree $d$ that differs from $x$, 
 we have to find the standard representative $p_j$ for elements 
 $$ p_i\ \ \smallmat {\pi_y} b {} 1 \hspace{0,5cm}\text{with }b\in\kappa_y,\text{ and}\hspace{0,5cm} p_i\ \ \smallmat 1 {} {} {\pi_y} \;. $$
 As $F$ has class number $1$, we can assume that $\pi_y\in F$ has nontrivial valuation in $y$ and $x$ only. 
 Let $\gamma\in G_F$ denote the inverse of one of the matrices $\tinymat {\pi_y} b {} 1 ,\tinymat 1 {} {} {\pi_y} $.
 For all places $z\neq x,y$, the canonical embedding $G_F\to G_z$ sends 
 $\gamma$ to a matrix $\gamma_z\in K_z$ since $v_z(\pi_y)=0$ by assumption.
 Thus multiplying with $\gamma\in G_F$ from the left, which operates diagonally on the components of all places,
 and multiplying componentwise with $\gamma_z^{-1}\in K_z$ from the right for all $z\neq x,y$,
 gives an element that is nontrivial only in $x$ (also compare with \cite[Lemma 3.7]{Gelbart1}).
 The matrices that we obtain in this way are:
 $$ \smallmat {\pi_x^d} b_0+\dotsb+b_{d-1}\pi_x^{d-1} {} 1 \ \ p_i\hspace{0,5cm}\text{with }b_i\in\kappa_x\text{ for }i=0,\dotsc,d-1,
    \text{ and}\hspace{0,5cm} \smallmat 1 {} {} {\pi_x^d} \ \ p_i \;. $$
 The reduction steps \eqref{trick1}--\eqref{trick6} of paragraph \ref{tricks} tell us which classes are represented,
 and we are able to determine the edges similarly to the previous example.
 Thus we obtain that $\cG_y$ only depends on the degree of $y$.
 Note that if $y$ is of degree $1$, then $\cG_y$ equals $\cG_x$.
 Figures \ref{figure_graph_rat_deg_2}, \ref{figure_graph_rat_deg_3}, \ref{figure_graph_rat_deg_4}, 
 and \ref{figure_graph_rat_deg_5} show the graphs for degrees $2$, $3$, $4$ and $5$, respectively.
\end{eg}

\begin{figure}[htb]
 \begin{center}
  \includegraphics{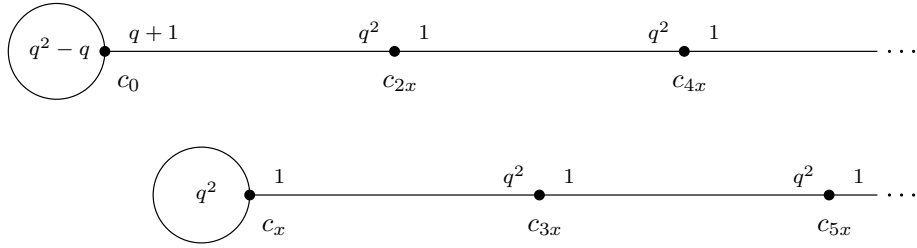}
  \caption{The graph of $\Phi_y$ for a place $y$ of degree $2$}
  \label{figure_graph_rat_deg_2}
 \end{center} 
\end{figure}

\begin{figure}[htb]
 \begin{center}
  \includegraphics{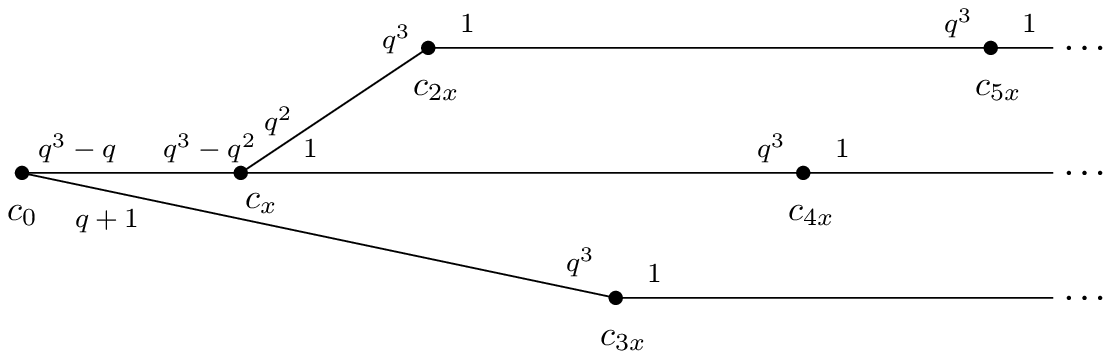}
  \caption{The graph of $\Phi_y$ for a place $y$ of degree $3$}
  \label{figure_graph_rat_deg_3}
 \end{center} 
\end{figure}

\begin{figure}[htb]
 \begin{center}
  \includegraphics{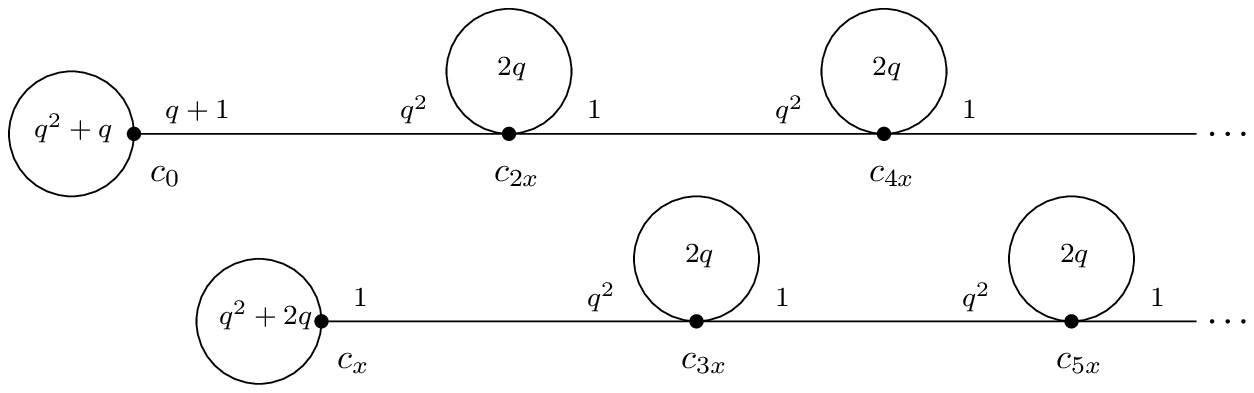}
  \caption{The graph of $\Phi_x^2$}
  \label{figure_graph_rat_deg_1_squared}
 \end{center} 
\end{figure}

\begin{figure}[htb]
 \begin{center}
  \includegraphics{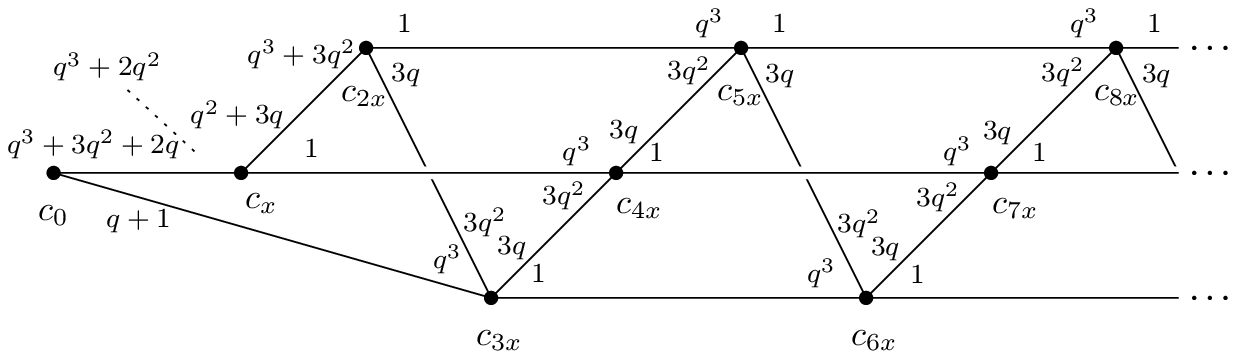}
  \caption{The graph of $\Phi_x^3$}
  \label{figure_graph_rat_deg_1_cubed}
  \ \vspace{-0,5cm}\\ \  
 \end{center} 
\end{figure}

\begin{figure}[htb]
 \begin{center}
  \includegraphics{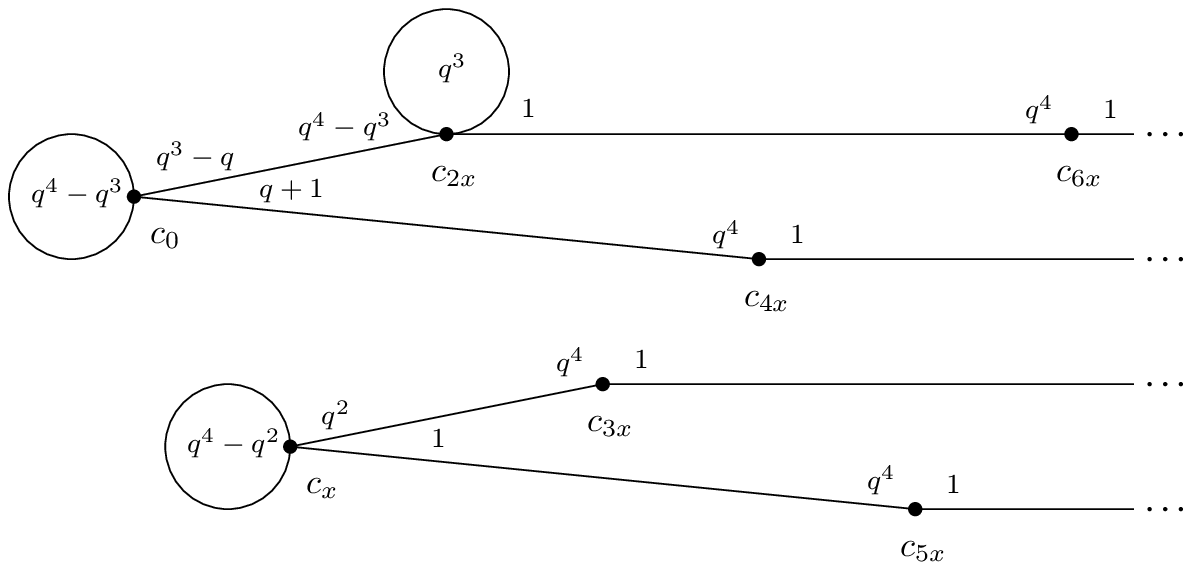}
  \caption{The graph of $\Phi_y$ for a place $y$ of degree $4$}
  \label{figure_graph_rat_deg_4}
 \end{center} 
\end{figure}

\begin{figure}[htb]
 \begin{center}
  \includegraphics{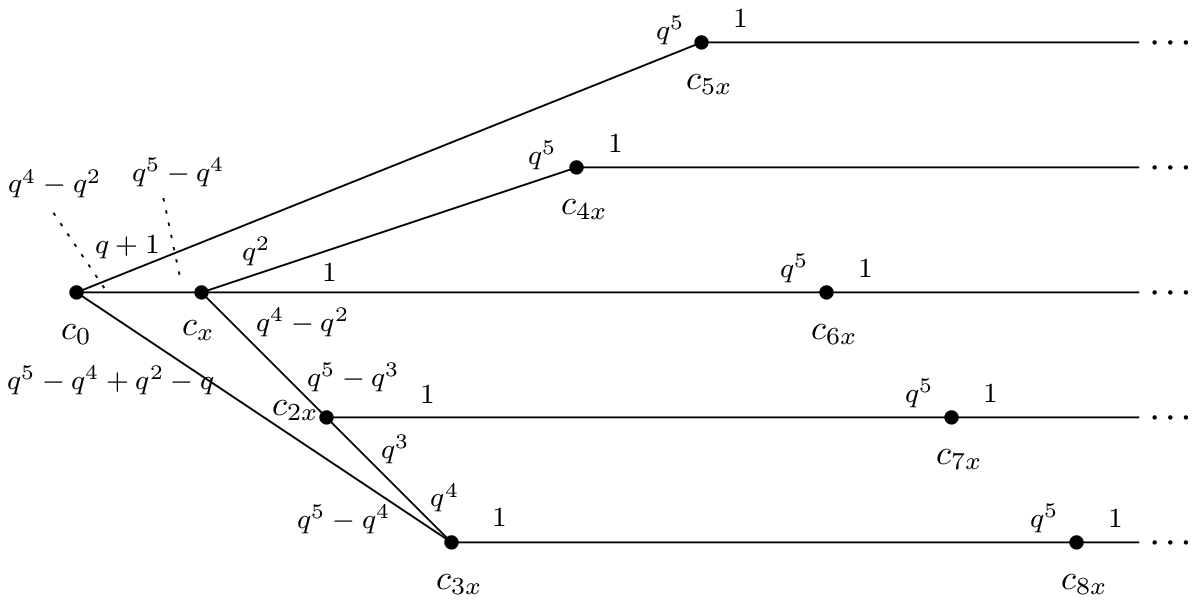}
  \caption{The graph of $\Phi_y$ for a place $y$ of degree $5$}
  \label{figure_graph_rat_deg_5}
 \end{center} 
\end{figure}

\begin{eg}[The graph of powers of $\Phi_x$]
 \label{powers_of_Phi_x_example}
 It is interesting to compare the graph of $\Phi_y$ with $\deg y=d$ to the graph of $\Phi_x^d$. 
 The latter graph is easily deduced from $\cG_x$ by means of paragraph \ref{algebra_strucure_in_graphs}.
 Namely, a vertex $v'$ is a $\Phi_x^d$-neighbour of a vertex $v$ in $\cG_{\Phi_x^d,K}$ if there is a path of
 length $d$ from $v$ to $v'$ in $\cG_x$, i.e.\ a sequence $(v_0,v_1,\dotsc,v_d)$ of vertices in $\cG_x$ with
 $v_0=v$ and $v_d=v'$ such that for all $i=1,\dotsc,d$, there is an edge $(v_{i-1}, v_i,m_i)$ in $\cG_x$. 
 The weight of an edge from $v$ to $v'$ in the graph of $\cG_x^d$ is obtained by taking the sum of the products $m_1\cdot\dotsc\cdot m_d$
 over all paths of length $d$ from $v$ to $v'$ in $\cG_x$.
 
 Figure \ref{figure_graph_rat_deg_1_squared} and \ref{figure_graph_rat_deg_1_cubed} 
 show the graphs of $\Phi_x^2$ and $\Phi_x^3$, respectively, and we see that for $\deg y=2$, we have $\Phi_x^2\equiv \Phi_y+2q\cdot 1\ (\textup{mod}\, \cJ(K))$ and for $\deg y=3$, we have $\Phi_x^3\equiv \Phi_y+3q\cdot \Phi_x\ (\textup{mod}\, \cJ(K))$ where $\cJ(K)$ is the ideal of $\cH_K$ of Hecke operators that operate trivial on $C^0(G_\AA)$.
\end{eg}

\begin{figure}[htb]
 \begin{center}
  \includegraphics{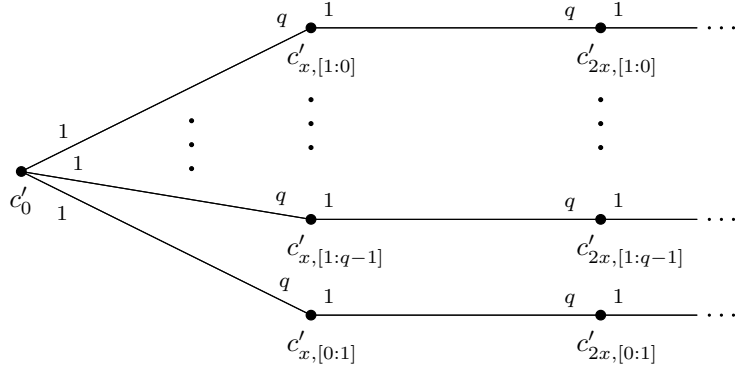}
  \caption{Graph of $\Phi'_{y,e}$ as defined in Example \ref{ramification_example}}
  \label{graph_ram_y}
 \end{center} 
\end{figure}

\begin{figure}[htb]
 \begin{center}
  \includegraphics{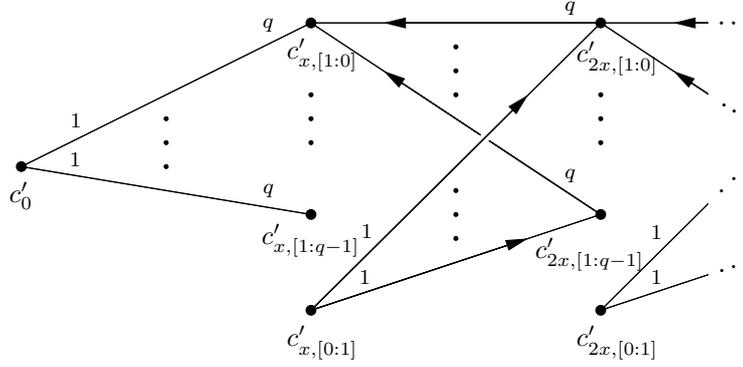}
  \caption{Graph of $\Phi'_x$ as defined in Example \ref{ramification_example}}
  \label{graph_ram_x}
 \end{center} 
\end{figure}

\begin{eg}[The graphs of two ramified Hecke operators]
 \label{ramification_example}
 It is also possible to determine examples for Hecke operators in $\cH_{K'}$ by elementary matrix manipulations, 
 when $K'<K$ is a subgroup of finite index. We will show two examples, 
 which are illustrated in Figures \ref{graph_ram_y} and \ref{graph_ram_x}.
 We omit the calculation, but only point out why the crucial differences between the two graphs occur.

 For $K'=\{k\in K\mid k_x\equiv\tinymat 1 {} {} 1 \pmod{\pi_x}\}$, the fibres of the projection
 $$ P: \lrquot{G_F}{G_\AA}{ K'} \longrightarrow \lrquot{G_F}{G_\AA}{ K} $$
 are given by $P^{-1}(c_0)=\{[p_0]\}$ and for positive $n$, by $P^{-1}(c_{nx})=\{[p_{nx}\vartheta_w]\}_{w\in\PP^1(\kappa_x)}$ with
 $\vartheta_{[1:c]}=\tinymat 1 c {} 1 $ and $\vartheta_{[0:1]}=\tinymat {} 1 1 {} $. 
 The union of these fibres equals the set of vertices of a Hecke operator in $\cH_{K'}$. We shall denote the vertices by
 $c'_0=[p_0]$ and $c'_{nx,w}=[p_{nx}\vartheta_w]$ for $n\geq1$ and $w\in\PP^1(\kappa_x)$.
 Note that $G_{\FF_q}=G_{\kappa_x}$ acts on $\PP^1(\kappa_x)$ from the right, 
 so if $\gamma\in G_{\FF_q}$, then $w\mapsto w\gamma$ permutes the elements of $\PP^1(\kappa_x)$.

 The first Hecke operator $\Phi'_{y,\gamma}\in\cH_{K'}$ that we consider 
 is $(\vol K/\vol K')$ times the characteristic function of $K'\tinymat {\pi_y} {} {} 1 \gamma K'$,
 where $y$ is a degree one place different to $x$ and $\gamma\in G_\AA$
 is a matrix whose only nontrivial component is $\gamma_x\in G_{\FF_q}$.
 (The factor $(\vol K/\vol K')$ is included to obtain integer weights).
 Since $K'\tinymat {\pi_y} {} {} 1 \gamma K'\subset K\tinymat {\pi_y} {} {} 1 \gamma K$, 
 the graph of $\Phi'_{y,\gamma}$ relative to $K'$ can have an edge from $v$ to $w$ only if $\cG_y$ has an edge from $P(v)$ to $P(w)$.
 Because $K'_y=K_y$, we argue as for $K$ that 
 $K'\tinymat {\pi_y} {} {} 1 \gamma K'=\coprod_{w\in\PP^1(\kappa_y)}\xi_w \gamma K'$.
 Applying the same methods as in Example \ref{graph_of_Phi_y_example}, one obtains that 
 $$ \cU_{\Phi'_{y,\gamma},K'}(c'_0) \ = \ \{(c'_0,c'_{x,w},1)\}_{w\in\PP^1(\kappa_x)} $$
 and for every $n\geq1$ and $w\in\PP^1(\kappa_x)$ that 
 $$ \cU_{\Phi'_{y,\gamma},K'}(c'_{nx,w}) \ = \ \{(c'_{nx,w},c'_{(n+1)x,w\gamma},1),(c'_{nx,w},c'_{(n-1)x,w\gamma},q)\} \;. $$
 For the case that $\gamma$ equals the identity matrix $e$, the graph is illustrated in Figure \ref{graph_ram_y}.
 Note that for general $\gamma$, an edge does not necessarily have an inverse edge since $w\gamma^2$ does not have to equal $w$.
 
 The second Hecke operator $\Phi'_x\in\cH_{K'}$ is $(\vol K/\vol K')$ times the characteristic function of $K'\tinymat {\pi_x} {} {} 1 K'$. This case behaves differently, since $K'_x$ and $K_x$ are not equal; in particular, we have $K'\tinymat {\pi_x} {} {} 1 K' = \coprod_{b\in\kappa_x}\tinymat {\pi_x} {b\pi_x} {} 1 K'$. This allows us to compute the edges as illustrated in Figure \ref{graph_ram_x}. Note that for $n\geq1$, the vertices of the form $c'_{nx,[1:0]}$ and $c'_{nx,[0:1]}$ behave particularly.
\end{eg}

\end{appendix}

\bigskip 

\bibliographystyle{plain}

\end{document}